\renewcommand{\d}{\mathrm{d}}
\newcommand{\D}{\mathrm{D}}
\newcommand{\e}{\mathrm{e}}
\newtheorem{Thm}{Theorem}[section]
\newtheorem{Lem}[Thm]{Lemma}
\newtheorem{Prop}[Thm]{Proposition}
\newtheorem{Cor}[Thm]{Corollary}
\newtheorem{Con}[Thm]{Conjecture}
\newtheorem*{MainThm}{Main Theorem}
\theoremstyle{definition}
\newtheorem{Def}[Thm]{Definition}
\newtheorem{Rem}[Thm]{Remark}
\newtheorem{Nota}[Thm]{Notation}
\newtheorem{Ex}[Thm]{Example}
\newtheoremstyle{named}{}{}{\itshape}{}{\bfseries}{.}{.5em}{#1 #3}
\theoremstyle{named}
\def\R{\mathbb{R}}
\def\Q{\mathbb{Q}}
\def\C{\mathbb{C}}
\def\Z{\mathbb{Z}}
\def\fD{\mathfrak{D}}
\def\fb{\mathfrak{b}}
\def\sl{\mathfrak{sl}}
\def\g{\mathfrak{g}}
\def\gl{\mathfrak{gl}}
\def\fh{\mathfrak{h}}
\def\cA{\mathcal{A}}
\def\cB{\mathcal{B}}
\def\cC{\mathcal{C}}
\def\cF{\mathcal{F}}
\def\cH{\mathcal{H}}
\def\cK{\mathcal{K}}
\def\cP{\mathcal{P}}
\def\cR{\mathcal{R}}
\def\cU{\mathcal{U}}
\def\cV{\mathcal{V}}
\def\cW{\mathcal{W}}
\def\cX{\mathcal{X}}
\def\a{\alpha}
\def\b{\beta}
\def\e{\epsilon}
\def\c{\gamma}
\def\D{\Delta}
\def\d{\delta}
\def\k{\kappa}
\def\l{\lambda}
\def\L{\Lambda}
\def\w{\omega}
\def\bA{\mathbf{A}}
\def\bC{\mathbf{C}}
\def\bD{\mathbf{D}}
\def\bx{\mathbf{x}}
\def\be{\mathbf{e}}
\def\bE{\mathbf{E}}
\def\bf{\mathbf{f}}
\def\bF{\mathbf{F}}
\def\bH{\mathbf{H}}
\def\bi{\mathbf{i}}
\def\bJ{\mathbf{J}}
\def\bK{\mathbf{K}}
\def\bQ{\mathbf{Q}}
\def\bT{\mathbf{T}}
\def\=>{\Longrightarrow}
\def\inj{\hookrightarrow}
\def\corr{\longleftrightarrow}
\def\to{\longrightarrow}
\def\ox{\otimes}
\def\o+{\oplus}
\def\bo+{\bigoplus}
\def\x{\times}
\def\<{\langle}
\def\>{\rangle}
\def\oo{\infty}
\def\cong{\equiv}
\def\^{\wedge}
\def\+{\dagger}
\def\sub{\subset}
\def\inv{^{-1}}
\def\half{\frac12}
\def\dis{\displaystyle}
\def\dd[#1,#2]{\frac{d#1}{d#2}}
\def\del[#1,#2]{\frac{\partial #1}{\partial #2}}
\def\over[#1]{\overline{#1}}
\def\vec[#1]{\overrightarrow{#1}}
\def\:{\;:\;}
\def\tab{\;\;\;\;\;\;}
\newcommand{\til}[1]{\widetilde{#1}}
\newcommand{\what}[1]{\widehat{#1}}
\newcommand{\mat}[1]{\begin{pmatrix}#1\end{pmatrix}}
\newcommand{\case}[2][cccccccccccccccccccccccccccccccccccccccccc]{\left\{\begin{array}{#1}#2 \\ \end{array}\right.}
\newcommand{\Eq}[1]{\begin{align}#1\end{align}}
\newcommand{\Eqn}[1]{\begin{align*}#1\end{align*}}
\renewcommand{\over}[1]{\overline{#1}}
\tikzset{>=latex}
\tikzstyle{vthick}=[line width=1.8pt]
\newcommand\drawpath[2]{%
  \foreach \too [count=\c from 1] in {#1}
  {
  \ifthenelse{\c=1}
  {\xdef\from{\too}}
  {\path (\from) edge [->, #2] (\too);
    \xdef\from{\too}}
  };
}
\begin{document}
\title{Parabolic Positive Representations of $\cU_q(\g_\R)$}

\author{  Ivan C.H. Ip\footnote{
	  Department of Mathematics, Hong Kong University of Science and Technology\newline
	  Email: ivan.ip@ust.hk\newline		
 	 The author is supported by the Hong Kong RGC General Research Funds ECS \#26303319.
          }
}
\maketitle

\numberwithin{equation}{section}

\begin{abstract}
We construct a new family of irreducible representations of $\cU_q(\g_\R)$ and its modular double by quantizing the classical parabolic induction corresponding to arbitrary parabolic subgroups, such that the generators of $\cU_q(\g_\R)$ act by positive self-adjoint operators on a Hilbert space. This generalizes the well-established positive representations which corresponds to induction by the minimal parabolic (i.e. Borel) subgroup. We also study in detail the special case of type $A_n$ acting on $L^2(\R^n)$ with minimal functional dimension, and establish the properties of its central characters and universal $\cR$ operator. We construct a positive version of the evaluation modules of the affine quantum group $\cU_q(\what{\sl}_{n+1})$ modeled over this minimal positive representation of type $A_n$.
\end{abstract}
\vspace{3mm}

{\small \textbf{Keywords.} quantum groups, positive representations, cluster algebra, parabolic subgroups, evaluation modules}
\vspace{3mm}

{\small \textbf {2010 Mathematics Subject Classification.} Primary 17B37, 13F60}

\tableofcontents
%==============================================================
\section{Introduction}\label{sec:intro}

\subsection*{Positive representations of $\cU_q(\g_\R)$}
\emph{Positive representations} was introduced in \cite{FI} to study the representation theory of \emph{split real quantum groups} $\cU_{q}(\g_\R)$ associated to semisimple Lie algebra $\g$, as well as its \emph{modular double} $\cU_{qq^\vee}(\g_\R)$ introduced by \cite{Fa1, Fa2} in the regime where $|q|=1$. These representations are natural generalizations of a special class of representations of $\cU_q(\sl(2,\R))$ originally studied by Teschner \emph{et al.} \cite{BT, PT1, PT2} from the physics point of view of quantum Liouville theory, which is characterized by the actions of \emph{positive (essentally) self-adjoint operators} on the Hilbert space $L^2(\R)$. 

Based on quantizing the regular action on smooth functions on the totally positive flag variety $G_{>0}/B_{>0}$, we constructed, for the simply-laced cases in \cite{FI, Ip2} and non-simply-laced cases in \cite{Ip3}, a family of irreducible representations $\cP_\l$ of $\cU_{qq^\vee}(\g_\R)$ with the generators acting on certain Hilbert space as positive self-adjoint operators, parametrized by $\l\in P_{\R^+}$ in the positive real-span of the dominant weights. 

Recently a cluster algebraic realization of these representations were also constructed, first for type $A_n$ in \cite{SS1} and the general case in \cite{Ip7}, where we established an embedding of $\cU_q(\g)$ into certain quantum torus algebra $\cX_q$ associated to the \emph{basic quiver} $\bD(\bi_0)$, such that a choice of \emph{polarization} of $\cX_q$ coincides with the positive representations. The positive representations and their cluster realizations are long known to be closely related to quantum higher Teichm\"uller theory \cite{FG1, FG3, Ka1}, and recently a full geometric interpretation based on the modular space of decorated $G$-local system is given in the monumental work of Goncharov and Shen \cite{GS}.

\subsection*{Main results}
In this paper, we construct a large class of irreducible representations of $\cU_{qq^\vee}(\g_\R)$, called the \emph{parabolic positive representations}, by quantizing the regular action on the totally positive part of the partial flag variety $G/P$ for \emph{any} parabolic subgroups. These representations are still characterized by the fact that the quantum group generators act by positive self-adjoint operators on some Hilbert spaces. In particular, the original positive representation is a special case of this new family of representations. To summarize, the main results of this paper are the following (Theorem \ref{main}, Corollary \ref{maincor}, Theorem \ref{mainmod}).

\begin{MainThm} We have a homomorphism of the Drinfeld's double $\fD_q(\g)$ onto a quantum torus algebra $\cX_q^{\bD(\bi)}$ associated to a subquiver $\bD(\bi)$ of the basic quiver $\bD(\bi_0)$ of the positive representations, such that a polarization of $\cX_q^{\bD(\bi)}$ provides a positive representation $\cP_\l^J$ for $\cU_{q}(\g_\R)$ and its modular double $\cU_{qq^\vee}(\g_\R)$, and it is a twisted quantization of the parabolic induction.
\end{MainThm}

The proof of the Main Theorem consists of several ingredients. First we review the construction of the basic quivers $\bD(\bi_0)$ in Section \ref{sec:pos:basicq}, which were first constructed in \cite{Ip7} for a reduced word $\bi_0$ of the longest element $w_0$ of the Weyl group. In this paper, we adapt the more general construction due to \cite{GS} in order to give an explicit realization of the \emph{extra vertices}, which were artificially added to the quiver previously in \cite{Ip7}. Next, we define a new notion of \emph{generalized Heisenberg double} (Definition \ref{defgenH}), which decomposes the image of $\fD_q(\g)$ in the cluster algebra $\cX_q$ satisfying a new set of algebraic relations, and finally we prove the Decomposition Lemma (Lemma \ref{decomp}), which decomposes the original positive representations in order to give us the parabolic ones. This requires an understanding of the combinatorics of the Coxeter moves of a reduced word (Lemma \ref{coxeter}), which was previously known to the author but never explicitly written down. We remark in Section \ref{sec:ppr:mod} that the construction naturally works for the modular double $\cU_{qq^\vee}(\g_\R)$ as well, thus establishing the Main Theorem.

\subsection*{Minimal positive representations}
A special case of the parabolic positive representations is worthy of special attention, which we call the \emph{minimal positive representations}. These are the representations of $\cU_q(\sl(n+1,\R))$ in type $A_n$, where we take the parabolic subgroup $P$ to be the maximal one, such that its codimension in $SL(n+1,\R)$ is the minimum. Corresponding to this choice, the quantum group $\cU_q(\g_\R)$ acts on the minimal positive representation $\cP_\l^{\min}\simeq L^2(\R^n)$ as positive operators parametrized by a single number $\l\in\R$, and this Hilbert space has the smallest functional dimension possible.

In fact, due to its simplicity, these representations served as our original motivation to investigate a possibility of degenerating the well-established positive representations, and we later realized a way to extend the construction to arbitrary parabolic cases. The simplicity of such representations is expected to find applications in mathematical physics and integrable systems. Therefore we study this family in more detail and establish several important properties concerning the universal $\cR$ operator and the central characters (Theorem \ref{minR}, Theorem \ref{minCas}).
\begin{MainThm} Acting on the minimal positive representations,
\begin{itemize}
\item The universal $\cR$ operator is well-defined as a unitary transformation on the tensor product $\cP_\l^{\min}\ox\cP_{\l'}^{\min}$ of minimal positive representations.
\item The Casimir operators acts by real scalars on $\cP_\l^{\min}$, and lies outside the spectrum of the positive Casimirs.
\end{itemize}
\end{MainThm}

Finally, the simplicity of the minimal positive representations, interpreted as a homomorphic image of the Drinfeld's double $\fD_q(\g)$ onto a quantum torus algebra $\cX_q^\bQ$ associated to a very simple quiver $\bQ$, allows us to construct a new class of positive representations for the quantum affine algebra $\cU_q(\what{\sl}(n+1,\R))$ (although it is no longer parabolic). It is still an open question to appropriately define the representation theory of split real affine quantum groups $\cU_q(\what{\g}_\R)$, and this accidental construction may allow for a first step. We show that the representations we obtained is isomorphic to the \emph{evaluation module} of Jimbo \cite{Ji2} associated to the minimal positive representation of $\cU_q(\sl(n+1,\R))$ constructed here.

\subsection*{Geometric interpretation} 
In this paper, we follow our previous strategy in \cite{FI, Ip2, Ip7} to establish a positive representations modeled over the totally positive part of the partial flag variety $G/P$, and study its quantization through algebraic means, establishing a homomorphism of the Drinfeld's double onto the quantum torus algebra $\cX_q^{\bD(\bi)}$ associated to the (double of the) basic quiver $\bQ(\bi)$. However, there are new difficulties that are not present in the usual positive representation case. For example, in the general parabolic case, we do not have a natural association of these basic quivers to triangles of the triangulation of punctured Riemann surfaces due to the degeneracy of the frozen degrees on the edges. Furthermore, several proofs in \cite{FI, Ip2, Ip3} and \cite{GS} utilized the freedom of the longest reduced word $\bi_0$ (more precisely, the existence of any reduced word of $w_0$ starting with any letter) which is not available in the parabolic case as well. 

According to \cite{GS}, the basic quiver $\bQ(\bi)$ is naturally associated to the Poisson structure of the \emph{partial configuration space} $\mathrm{Conf}_u^e(\cA)$ of the principal affine space $\cA$ for some element $u\in W$ of the Weyl group. Therefore one should try to understand and possibly simplify the proofs of the parabolic positive representations by quantizing the geometrical methods using, perhaps, partial decorated $G$-local system, where the decorations are provided by partial flags, and study the combinatorics of the its potential functions $\cW_i$. In particular, we expect that the Lusztig's braid group action can be established through the geometric point view of the parabolic positive representations.

\subsection*{Outline} The outline of the paper is as follows. In Section \ref{sec:pre}, we establish our notations and give preliminaries to the notion of total positivity, parabolic subgroups, quantum groups, Drinfeld's double, and quantum torus algebra. In Section \ref{sec:pos} we summarize all the structural results of positive representations and its cluster realization, adapting to the notations used in this paper. In particular we recall the construction of the basic quiver. In Section \ref{sec:min}, we give our motivation with the toy model of $\cU_q(\sl(4,\R))$, and construct the minimal positive representations for $\cU_q(\sl(n+1,\R))$ in general, establishing some of its properties. We also generalize this construction to the case of affine quantum group $\cU_q(\what{\sl}(n+1,\R))$ and compare with its evaluation modules. In Section \ref{sec:ppr}, we construct the parabolic positive representation in full generality, by proving the Decomposition Lemma of the quantum group generators. In Section \ref{sec:ex} we provide two examples to demonstrate the construction of this paper, and finally in Section \ref{sec:open} we discuss several open research directions.

%==============================================================
\section{Preliminaries}\label{sec:pre}
%==============================================================
\subsection{Total positivity and parabolic subgroups}

Let $\g$ be a simple\footnote{The results of this paper obviously also work for the semisimple case by taking direct product.} Lie algebra over $\C$. Let $G$ be the real simple Lie group corresponding to the split real form $\g_\R$ of the Lie algebra $\g$, and let $B, B^-$ be two opposite Borel subgroups containing a split real maximal torus $T=B\cap B^-$. Let $U^+\subset B, U^-\subset B^-$ be the corresponding unipotent subgroups.

Let $I$ be the root index of the Dynkin diagram of $\g$ such that
\Eq{
|I|=n=\mathrm{rank}(\g).}
Let $\Phi$ be the set of roots of $\g$, $\Phi_+\subset \Phi$ be the positive roots, and $\D_+=\{\a_i\}_{i\in I}\subset \Phi_+$ be the positive simple roots. Let $W=\<s_i\>_{i\in I}$ be the Weyl group of $\Phi$ generated by the simple reflections $s_i:=s_{\a_i}$.
\begin{Def} Let $(-,-)$ be a $W$-invariant inner product of the root lattice. We define
\Eq{
a_{ij}:=\frac{2(\a_i,\a_j)}{(\a_i,\a_i)},\tab i,j\in I
}
such that $A:=(a_{ij})$ is the \emph{Cartan matrix}. 

We normalize $(-,-)$ as follows: we choose the symmetrization factors (also called the \emph{multipliers}) such that for any $i\in I$,
\Eq{\label{di}d_i:=\frac{1}{2}(\a_i,\a_i)=\case{1&\mbox{$i$ is long root or in the simply-laced case,}\\\frac{1}{2}&\mbox{$i$ is short root in type $B,C,F$,}\\\frac{1}{3}&\mbox{$i$ is short root in type $G$,}}}
and $(\a_i,\a_j)=-1$ when $i,j \in I$ are adjacent in the Dynkin diagram, such that
\Eq{
d_ia_{ij}=d_ja_{ji}.
}
\end{Def}
\begin{Def}
Let $w\in W$. We call a sequence 
\Eq{\bi=(i_1,...,i_k),\tab i_k\in I} a \emph{reduced word} of $w$ if 
$$w=s_{i_1}s_{i_2}\cdots s_{i_k}$$
is a reduced expression, and let 
\Eq{
l(w):=|\bi|:=k
} be its length. We write 
\Eq{
\bi^{op}:=(i_k,...,i_1)} for the reduced word of $w\inv$.

Let $w_0$ be the longest element of $W$. Throughout the paper we let 
\Eq{
N:=l(w_0)}
and denote by $\bi_0$ a reduced word of $w_0$.
\end{Def}
A useful fact is the following:
\begin{Prop}\label{w0head}\cite[Lemma 10.8]{GS} If $\bi=(i_1,...,i_k)$ is a reduced word, then there exists a reduced word $\bi_0$ of $w_0$ starting with $\bi$.
\end{Prop}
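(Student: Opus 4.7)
The plan is to use the standard length identity $\ell(w_0 u) = \ell(w_0) - \ell(u)$ for the longest element $w_0$ of a finite Coxeter group, and then concatenate reduced expressions.

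First, I would set $w := s_{i_1}\cdots s_{i_k}$, so $\bi$ is a reduced expression for $w$ and $\ell(w) = k$. Define $u := w\inv w_0$. Since $w_0\inv = w_0$, we have $u = (w_0 w)\inv$, so $\ell(u) = \ell(w_0 w)$. The key input is the identity
\Eq{
\ell(w_0 w) = \ell(w_0) - \ell(w),\nonumber
}
which follows from the root-theoretic description $\ell(v) = |\{\alpha \in \Phi_+ : v(\alpha) \in -\Phi_+\}|$ together with the fact that $w_0$ interchanges $\Phi_+$ and $-\Phi_+$: for each $\alpha\in\Phi_+$, exactly one of $w(\alpha)$ and $w_0 w(\alpha)$ lies in $-\Phi_+$, so the contributions to $\ell(w)$ and $\ell(w_0 w)$ partition $\Phi_+$.

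Next, pick any reduced expression $\bj=(j_1,\dots,j_{N-k})$ for $u$, whose length is $N-k$ by the above. Form the concatenation $\bi_0 := \bi\cdot\bj = (i_1,\dots,i_k,j_1,\dots,j_{N-k})$. By construction this word has length $N = \ell(w_0)$ and, as a product of simple reflections, equals $w\cdot u = w\cdot w\inv w_0 = w_0$. Since its length matches $\ell(w_0)$, the expression is automatically reduced, hence $\bi_0$ is a reduced word for $w_0$ starting with $\bi$.

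There is essentially no obstacle beyond invoking the length identity: all the content sits in $\ell(w_0 w)=\ell(w_0)-\ell(w)$, after which the concatenation argument is immediate because the subadditivity $\ell(ab)\le \ell(a)+\ell(b)$ becomes equality precisely when the concatenated expression is reduced. One could alternatively prove the proposition by induction on $N-k$, appending at each stage a simple reflection $s_j$ with $\ell(ws_j)=\ell(w)+1$ (which exists whenever $w\neq w_0$, since otherwise $w$ would send every positive simple root to a positive root and hence equal the identity), but the direct length argument above is cleaner.
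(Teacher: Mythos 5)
Your main argument is correct and is the standard proof of this standard Coxeter-theoretic fact; the paper itself does not prove the proposition but simply cites \cite[Lemma 10.8]{GS}, so there is no internal proof to compare against. The length identity $\ell(w_0 w)=\ell(w_0)-\ell(w)$, justified via the inversion-set description of length together with $w_0(\Phi_+)=-\Phi_+$, combined with concatenation of reduced expressions, is exactly how one proves this. One very small slip: in your parenthetical describing the alternative inductive route, the contrapositive is stated backwards. If no $j$ satisfies $\ell(ws_j)=\ell(w)+1$, then $w(\alpha_j)\in-\Phi_+$ for \emph{every} simple root $\alpha_j$, i.e.\ $w$ sends every simple root to a \emph{negative} root, which forces $w=w_0$ (not the identity). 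The version you wrote — sending every simple root to a positive root, forcing $w=e$ — is the criterion for $\ell(w)=0$, not for $\ell(w)=N$. This does not affect the validity of your main proof, which never relies on the inductive variant.
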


\begin{Def} Let $w_0\in W$ be the longest element of the Weyl group. The Dynkin involution
\Eq{
I&\to I\nonumber\\
i&\mapsto i^*
}
is defined by
\Eq{w_0 s_i w_0 = s_{i^*}.}
Equivalently, we have
\Eq{
w_0(\a_i) = \a_{-i^*},\tab \a_i\in \D_+
}
where $\a_{-i}$ are the negative simple roots.

\end{Def}

Next we recall the description of the Lusztig's data for total positivity, given in detail in \cite{Lu}. 
\begin{Def}For any $i\in I$, there exists a homomorphism $$SL_2(\R)\to G$$ induced by \footnote{We used $h_i$ instead of the more traditional $\chi_i$ to distinguish it from the characters $\chi_\l$ used later.}
\begin{eqnarray}
\mat{1&a\\0&1}&\mapsto& x_i(a)\in U_i^+,\\
\mat{b&0\\0&b\inv}&\mapsto &h_i(b)\in T,\\
\mat{1&0\\c&1}&\mapsto &y_i(c)\in U_i^-,
\end{eqnarray}
called a \emph{pinning} of $G$, where $U_i^+\subset U^+$ and $U_i^-\subset U^-$ are the \emph{simple root subgroups} of the unipotent subgroup $U^+$ and $U^-$ respectively. 
\end{Def}

\begin{Lem}\cite[Proposition 2.7]{Lu}\label{inj} 
Let $\bi_0=(i_1,...,i_N)$ be a reduced word of the longest element $w_0$. The map 
\Eq{
\iota:\R_{>0}^N&\to U^+\nonumber\\
\label{w0coord}\iota:(a_1,a_2,...,a_N)&\mapsto x_{i_1}(a_1)x_{i_2}(a_2)...x_{i_N}(a_N)
}
is injective. The positive unipotent semigroup $U_{>0}^+$ is defined to be the image of $\iota$:
\Eq{
U_{>0}^+:=\iota(\R_{>0}^N).
}
We have the similar definition for $U_{>0}^-$ using $y_i$ instead.
\end{Lem}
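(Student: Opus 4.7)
Since this is a classical result of Lusztig, my plan is to reproduce the standard argument using generalized minors and the Chamber Ansatz of Berenstein--Fomin--Zelevinsky, which has the virtue of also producing an explicit inverse. To each position $k\in\{1,\ldots,N\}$ of the reduced word $\bi_0=(i_1,\ldots,i_N)$, I would associate the \emph{chamber weight}
\[
\gamma_k\;:=\;s_{i_1}s_{i_2}\cdots s_{i_{k-1}}(\omega_{i_k})
\]
in the weight lattice of $\g$, and the generalized minor
\[
\Delta_{\gamma_k,\omega_{i_k}}(u)\;:=\;\langle v_{\gamma_k}^{*},\,u\cdot v_{\omega_{i_k}}\rangle
\]
in the fundamental representation $V_{\omega_{i_k}}$, where $v_{\omega_{i_k}}$ is a highest weight vector and $v_{\gamma_k}^{*}$ generates the dual of the (one-dimensional) $\gamma_k$-weight space.

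The central calculation is to show that when evaluated on $u=\iota(a_1,\ldots,a_N)$, each minor $\Delta_{\gamma_k,\omega_{i_k}}(u)$ is an explicit Laurent monomial in $(a_1,\ldots,a_N)$ whose exponent at $a_k$ is a positive integer. This would proceed by expanding
\[
x_{i_1}(a_1)\cdots x_{i_N}(a_N)\cdot v_{\omega_{i_k}}
\]
in $V_{\omega_{i_k}}$ using $x_{i_j}(a_j)=\exp(a_j\,e_{i_j})$ and then projecting onto the $\gamma_k$-weight space. Careful bookkeeping shows that the factors $x_{i_j}(a_j)$ with $j>k$ stabilize the subspace spanned by weights $\leq\gamma_k$ in the weak order and act on the relevant line by the identity, while those with $j<k$ contribute an explicit monomial in $(a_1,\ldots,a_{k-1})$ by induction on $k$, leaving the single factor $a_k$ from the action of $x_{i_k}(a_k)$ itself.

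Once this monomial formula is established, the Chamber Ansatz inverts the system: each $a_k$ is recovered as a Laurent monomial in the minors $\Delta_{\gamma_j,\omega_{i_j}}(u)$ indexed by $j$ in a combinatorial neighborhood of $k$, and hence is uniquely determined by $u=\iota(\ba)$, proving injectivity. As a bonus, positivity of all $a_k$ guarantees positivity of every minor, so the inverse is a continuous (indeed real-analytic) map $U_{>0}^+\to\R_{>0}^N$, and the statement for $U^-_{>0}$ follows by applying the Cartan involution $x_i(a)\leftrightarrow y_i(a)$. The main obstacle is the combinatorial verification that $\gamma_k$ appears in $V_{\omega_{i_k}}$ with a one-dimensional weight space and that the later factors preserve the filtration by weights $\leq\gamma_k$; this is a manifestation of the fact that reduced words of $w_0$ enumerate maximal chains in the weak Bruhat order, and can be checked inductively using the exchange condition together with Proposition \ref{w0head} to reduce to the rank-one case where $G=SL_2$ and the statement is immediate.
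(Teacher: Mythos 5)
The paper does not prove Lemma \ref{inj}; it simply cites \cite[Proposition~2.7]{Lu}, so there is no "paper proof" to compare against. What you sketch is a version of the Berenstein--Fomin--Zelevinsky strategy, which is a legitimate alternative to Lusztig's original argument and has the merit of producing an explicit inverse.

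However, your proposal contains a genuine error that invalidates the central calculation as written. You define
\[
\Delta_{\gamma_k,\omega_{i_k}}(u)\;=\;\langle v_{\gamma_k}^{*},\,u\cdot v_{\omega_{i_k}}\rangle
\]
with $v_{\omega_{i_k}}$ a \emph{highest} weight vector. But every $u\in U^+$ fixes the highest weight vector of each $V_{\omega_i}$, since $x_{i_j}(a_j)=\exp(a_je_{i_j})$ and $e_{i_j}v_{\omega_{i_k}}=0$; hence $u\cdot v_{\omega_{i_k}}=v_{\omega_{i_k}}$ identically and the minor equals $\delta_{\gamma_k,\omega_{i_k}}\in\{0,1\}$. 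The expansion you propose to carry out collapses at the first step, and in fact every one of your minors is constant, so no information about $(a_1,\ldots,a_N)$ survives. The remedy is to pair against the \emph{lowest} weight vector $v_{w_0\omega_{i_k}}$ (equivalently, evaluate on $\bar{w}_0 u$ or use BFZ's two-sided minors $\Delta_{\gamma,\delta}$ with $\delta$ an extreme weight other than $\omega_{i_k}$). Once this is fixed, one more claim needs care: it is not true that each chamber minor has positive $a_k$-exponent. Already for $\g=\sl_3$ with $\bi_0=(1,2,1)$ the three chamber minors on $u=x_1(a)x_2(b)x_1(c)$ evaluate to $ab$, $bc$, and $b$; the last one has $c$-exponent zero. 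Injectivity nevertheless holds because the integer exponent matrix of this monomial map is unimodular, which is what the Chamber Ansatz actually guarantees, so you should replace the pointwise positivity claim with an argument that this matrix is invertible (e.g. by exhibiting it as unitriangular after a suitable reordering of positions). Finally, the appeal to Proposition~\ref{w0head} at the end does not obviously help: that proposition extends shorter reduced words to longest ones, whereas here you already start with a reduced word of $w_0$; the reduction to rank one you want is better organized by peeling off $x_{i_N}(a_N)$ and using the induced reduced word of $w_0 s_{i_N}$.
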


\begin{Def} The totally positive semigroup is defined to be
\Eq{\label{gauss}
G_{>0}:=U_{>0}^- T_{>0} U_{>0}^+,
} where $U_{>0}^\pm$ is as above, and $T_{>0}$ are generated by the images $h_i(b)$ with $b\in\R_{>0}$, $i\in I$.
\end{Def}
\begin{Lem} We have the following identities in $G_{>0}$: for $a,b,c\in \R_{>0}$ and $i,j\in I$,
\Eq{
\label{EF}x_i(a)y_j(c)&=y_j(c)x_i(a),&& \mbox{ if $i\neq j$},\\
\label{EK}h_i(b)x_j(a)&=x_j(b^{a_{ij}} a)h_i(b),\\
\label{EKF}x_i(a)h_i(b)y_i(c)&=y_i(\frac{c}{ac+b^2})h_i(\frac{ac+b^2}{b})x_i(\frac{a}{ac+b^2}),\\
\label{EE0}x_i(a)x_j(b)&=x_j(b)x_i(a),&& a_{ij}=0,\\
\label{EE1}x_i(a)x_j(b)x_i(c)&=x_j(\frac{bc}{a+c})x_i(a+c)x_j(\frac{ab}{a+c}), && a_{ij}=-1,\\
\label{EE2}x_i(a)x_j(b)x_i(c)x_j(d)&=x_j(\frac{bc^2d}{S})x_i(\frac{S}{R})x_j(\frac{R^2}{S})x_i(\frac{abc}{R}),&& a_{ij}=-2,
}
where
\Eq{
R=ab+ad+cd\tab S=a^2b+d(a+c)^2.}
We also have an explicit expression for the case $a_{ij}=-3$, see \cite[Theorem 3.1]{BZ}, but we will not need them in this paper. 
The same relations \eqref{EE0}--\eqref{EE2} also hold for $y_i$.
\end{Lem}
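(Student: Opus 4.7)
The plan is to verify each of the six identities by reducing to a rank-one or rank-two subgroup of $G$ obtained from the pinning, and then matching the two sides by an explicit matrix calculation in the smallest faithful representation. Every identity involves at most two indices $i,j$, so one can work inside the $\mathrm{SL}_2$-subgroup $G_{\{i\}}$ for \eqref{EKF}, or inside the rank-two semisimple subgroup $G_{\{i,j\}}$ for the other five.

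The ``mixed'' relations \eqref{EF}, \eqref{EK}, \eqref{EKF} are the easy ones. For \eqref{EF} one uses that whenever $i\neq j$ one has $[E_i,F_j]=0$, so the one-parameter subgroups $x_i(a)=\exp(aE_i)$ and $y_j(c)=\exp(cF_j)$ commute inside $G$. For \eqref{EK} one uses that $h_i(b)=\a_i^\vee(b)$ acts on the root subgroup $U_j^+$ through the character value $\a_j(\a_i^\vee(b))=b^{a_{ij}}$, which gives the claimed rescaling $x_j(a)\mapsto x_j(b^{a_{ij}}a)$. For \eqref{EKF} all three factors lie in the single $\mathrm{SL}_2$-subgroup for index $i$; a direct $2\x 2$ multiplication produces the matrix $\mat{b+ab\inv c & ab\inv \\ b\inv c & b\inv}$, whose Gauss decomposition in the form $y_i(c')h_i(b')x_i(a')$ forces $b'=(ac+b^2)/b$, $a'=a/(ac+b^2)$, $c'=c/(ac+b^2)$, matching the lemma.

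The braid-type relations \eqref{EE0}, \eqref{EE1}, \eqref{EE2} reduce to identities inside the rank-two semisimple subgroup $G_{\{i,j\}}$, whose type is $A_1\x A_1$, $A_2$, or $B_2$ depending on whether $a_{ij}$ is $0$, $-1$, or $-2$. In type $A_1\x A_1$ commutativity is immediate from the Serre relation $[E_i,E_j]=0$. In type $A_2$ one multiplies out inside $\mathrm{SL}_3$: both sides of \eqref{EE1} are upper unitriangular $3\x 3$ matrices, and matching their three nonzero off-diagonal entries determines the rational functions in $a,b,c$ claimed. In type $B_2$ one works inside $\mathrm{Sp}_4$ via its $4\x 4$ defining representation: equating sufficiently many entries of the $4\x 4$ matrices on each side of \eqref{EE2} forces the auxiliary polynomials $R=ab+ad+cd$ and $S=a^2b+d(a+c)^2$. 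This $B_2$ computation is the main obstacle --- not conceptually deep, but algebraically heavy --- and is carried out in detail in \cite{Lu, BZ}.

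Finally, the statement that the same relations \eqref{EE0}--\eqref{EE2} hold with $y_i$ in place of $x_i$ follows by applying the Chevalley involution of $G$, which integrates the Lie algebra automorphism $E_i\leftrightarrow F_i$, $H_i\mapsto -H_i$. This group automorphism preserves multiplication and sends $x_i(t)\mapsto y_i(t)$, so it transports each verified $x$-identity verbatim into the corresponding $y$-identity.
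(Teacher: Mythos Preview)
Your argument is correct. The paper itself does not supply a proof of this lemma: it is stated as a standard preliminary fact about totally positive semigroups, with a pointer to \cite{BZ} for the $a_{ij}=-3$ case, so your reduction to rank-one and rank-two subgroups followed by explicit matrix computation is exactly the verification one finds in the cited literature (Lusztig, Berenstein--Zelevinsky).
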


Next we recall some terminologies regarding parabolic subgroups that are essential in this paper.
\begin{Def}
Let $J\subset I$ be a (possibly empty) subset of the Dynkin nodes, and $W_J\subset W$ the corresponding subgroups generated by $s_j, j\in J$. The \emph{Levi subgroup} $L_J$ is the subgroup of $G$ generated by the maximal torus $T$ and the root subgroups $U_j^+, U_j^-$ where $j\in J$. The \emph{standard parabolic subgroup}\footnote{
We used the opposite version instead of the usual $P_J=L_JB$ to fit the calculations later, so that in $SL_n$ the parabolic subgroups $P_J$ are represented by  lower block triangular matrices.} $P_J$ is defined as $P_J=B_-L_J$.

We write 
\Eq{
N_J:=l(w_J)
} to be the length of the \emph{longest element} $w_J\in W_J$ of the Weyl subgroup $W_J\subset W$. Hence the codimension of $P_J$ in $G$ is given by
\Eq{
\mathrm{codim}_G(P_J)=N-N_J.
}
\end{Def}
It is well-known that any parabolic subgroup, i.e. subgroup containing a Borel subgroup, is conjugated to $P_J$ for some $J$.
\begin{Rem} By convention, if $J=\emptyset$, then $P_J:=B_-$ and $W_J=\{e\}$ is the trivial subgroup.
\end{Rem}

\begin{Ex} For $G=SL_5(\R)$, let $J=\{1,2,4\}$. Then the standard parabolic subgroup $P_J\subset G$ is the set of $5\x 5$ matrix of the form
\Eq{
P_J=\mat{*&*&*&0&0\\{*}&*&*&0&0\\{*}&*&*&0&0\\{*}&*&*&*&*\\{*}&*&*&*&*}.
}
\end{Ex}

If $P_J$ is a parabolic subgroup, we denote by 
\Eq{
P_{J}^{>0}:=G_{>0}\cap P
} its totally positive part. 

It follows from \eqref{gauss} that we have the \emph{Langlands decomposition} as well in the totally positive case.
\begin{Lem} Let $w_J$ be the longest element of the subgroup $W_J\subset W$ and $\bi_J=(j_1,...,j_{N_J})$, $j_k\in J\subset I$ be its reduced expression. Then we have
\Eq{
P_J^{>0}=U_{>0}^- T_{>0} M_{>0}\label{langlandsd}
}
where $M_{>0}$ is the image of the embedding
\Eq{\iota: \R_{>0}^{N_J} &\to U_{>0}^+ \nonumber\\
(a_1,...,a_{N_J}) &\mapsto x_{j_1}(a_1)x_{j_2}(a_2)\cdots x_{j_{N_J}}(a_{N_J}).\label{paraLuscoord}
}
\end{Lem}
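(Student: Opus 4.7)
The plan is to obtain the decomposition \eqref{langlandsd} as a direct consequence of the Gauss decomposition applied internally to the Levi subgroup $L_J$, combined with the standard Levi-type factorization of the parabolic $P_J$.

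First, I would apply Lemma \ref{inj} to the Levi subgroup $L_J$. Since $w_J$ is by definition the longest element of the Weyl group $W_J$ of $L_J$, and $\bi_J=(j_1,\ldots,j_{N_J})$ is a reduced word for $w_J$ of length $N_J$, the Lusztig map $\iota$ in \eqref{paraLuscoord} is injective when interpreted as a map into $L_J$ equipped with the pinning induced from $G$. Consequently $M_{>0}:=\iota(\R_{>0}^{N_J})$ is well-defined and coincides with the totally positive unipotent subsemigroup $(U_J^+)_{>0}$ of the Levi.

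Next, I would establish the group-theoretic factorization $P_J = U^- T U_J^+$. This combines the definition $P_J = B_- L_J = U^- T L_J$ with the Gauss decomposition $L_J = U_J^- T U_J^+$ of the big cell of $L_J$, together with the inclusions $U_J^- \subset U^-$ and $T \subset L_J$, yielding after absorption $P_J = U^- T U_J^+$. Intersecting with $G_{>0}$ and restricting each factor to its totally positive semigroup via \eqref{gauss} and the first step produces $P_J^{>0} = U_{>0}^- T_{>0} M_{>0}$, as claimed.

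The main obstacle is to justify the uniqueness and consistency of the three-factor decomposition. Uniqueness follows from the uniqueness of the ambient Gauss decomposition of the big cell of $G$, which determines $u_- \in U^-$, $t \in T$, $u_+ \in U^+$ from $g$, combined with the injectivity of $\iota$ on $\R_{>0}^{N_J}$ established in the first step, which then uniquely recovers the coordinates of $m \in M_{>0}$ from its embedded image in $U^+$. The confinement of the $U^+$-component specifically to $U_J^+$ rather than to an arbitrary element of $U_{>0}^+$ can be handled via the standard structural identity $P_J \cap U^+ = U_J^+$, which in turn follows from the fact that the only positive roots whose root subgroups lie in $P_J$ are those of $\Phi_J^+$.
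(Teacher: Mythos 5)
The paper gives no independent proof of this lemma; it is stated as a direct remark following \eqref{gauss}. Your proposal follows the natural group-theoretic route, and the factorization logic (Levi decomposition of $P_J$, Gauss decomposition of the big cell of $L_J$, uniqueness from injectivity of $\iota$ and the inclusion $P_J\cap U^+=U_J^+$) is the right skeleton. However, there is a genuine gap in your step that passes from $P_J\cap(\text{big cell})=U^-TU_J^+$ to $P_J^{>0}=U_{>0}^-T_{>0}M_{>0}$ by ``intersecting with $G_{>0}$ and restricting each factor.''

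Those two operations are not the same, and in fact the literal global intersection $G_{>0}\cap P_J$ is empty whenever $J\neq I$. The reason is that $M_{>0}=\iota(\R_{>0}^{N_J})$ is not a subset of $U_{>0}^+$: extending $\bi_J$ to a reduced word $\bi_0$ of $w_0$ (Proposition \ref{w0head}), an element $m\in M_{>0}$ has Lusztig coordinates $(a_1,\ldots,a_{N_J},0,\ldots,0)$, and these lie on the boundary of the open cell $U_{>0}^+=\iota(\R_{>0}^N)$, not in it. Concretely, in $SL_3$ with $J=\{1\}$, any $u_+=x_1(a)x_2(b)x_1(c)\in U_{>0}^+$ has $(1,3)$-entry $ab>0$, so $u_+\notin U_J^+$; hence no element of $G_{>0}=U_{>0}^-T_{>0}U_{>0}^+$ has $u_+$-component in $U_J^+$, and $G_{>0}\cap P_J=\emptyset$. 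Your step 3 silently replaces the intersection with the nonnegative part with separate positivity of each Gauss factor, and that is exactly what needs justification (or, more accurately, a corrected definition: one should intersect with the totally nonnegative monoid $G_{\geq 0}$, or define $P_J^{>0}$ directly as $U_{>0}^-T_{>0}M_{>0}$). The resulting statement is what the paper actually uses in Lemma \ref{parachar} and Section \ref{sec:min:ex}, but as written neither the paper's one-line remark nor your proposal reconciles the notation $P_J^{>0}:=G_{>0}\cap P_J$ with the nonempty decomposition being asserted.
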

Since $P_J$ contains the full subgroup of $SL_2$ under the pinning corresponding to the simple root $\a_j$ where $j\in J$, we have the following characterization of the \emph{characters} of $P_J^{>0}$ in the totally positive scenario:
\begin{Lem}\label{parachar} Let 
\Eq{
h_1(a_1)\cdots h_n(a_n)\in h_{>0},\tab a_i\in \R_{>0}}
be the coordinates of the abelian component $T_{>0}$ of the Langlands decomposition of $g\in P_J^{>0}$. Then any positive character $$\chi:P_J^{>0}\to \R_{>0}$$ of the totally positive part $P_J^{>0}$ is parametrized by the scalars $\l=(\l_k)\in \R^{|I\setminus J|}$, given by
\Eq{
\chi_\l(g) := \prod_{k\in I\setminus J} a_k^{\l_k},\tab g\in P_J^{>0}.
}
\end{Lem}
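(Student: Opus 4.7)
The plan is to exploit the Langlands decomposition \eqref{langlandsd} together with the exchange identities \eqref{EK} and \eqref{EKF} to force $\chi$ to be trivial on everything except the torus generators $h_k(\R_{>0})$ for $k\in I\setminus J$. Writing $g=u^- t m$ with $u^-\in U_{>0}^-$, $t=h_1(a_1)\cdots h_n(a_n)\in T_{>0}$, and $m\in M_{>0}$, multiplicativity of $\chi$ gives $\chi(g)=\chi(u^-)\chi(t)\chi(m)$, so the problem reduces to evaluating $\chi$ on each of these three factors separately.

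First I would kill the unipotent pieces. Relation \eqref{EK} and its obvious $y$-analogue $h_i(b)y_j(c)=y_j(b^{-a_{ij}}c)h_i(b)$ give, upon applying $\chi$ and cancelling $\chi(h_i(b))$, the identity $\chi(y_j(c))=\chi(y_j(b^{-a_{ij}}c))$ for all $b,c>0$. Connectedness of the Dynkin diagram supplies, for every $j\in I$, some $i$ with $a_{ij}\neq 0$, whence $b^{-a_{ij}}$ sweeps all of $\R_{>0}$ as $b$ varies, so $\chi\circ y_j$ is constant; the additive semigroup law $y_j(c_1)y_j(c_2)=y_j(c_1+c_2)$ then pins this constant to $1$. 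Combined with the Lusztig parametrization of Lemma \ref{inj}, this yields $\chi|_{U_{>0}^-}=1$, and running the same argument with $x_j$ in place of $y_j$ yields $\chi|_{M_{>0}}=1$.

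For the torus, each $a\mapsto\chi(h_i(a))$ is a continuous multiplicative function $\R_{>0}\to\R_{>0}$, hence of the form $a\mapsto a^{\l_i}$ for some $\l_i\in\R$. To show $\l_i=0$ for $i\in J$, observe that the full $SL_2(\R)_{>0}$-image of the pinning at the simple root $\a_i$ lies in $P_J^{>0}$, so \eqref{EKF} applies inside $P_J^{>0}$. Applying $\chi$ and invoking the triviality on $x_i$ and $y_i$ already established, one obtains
\[
\chi(h_i(b))=\chi\bigl(h_i((ac+b^2)/b)\bigr)\quad\text{for all } a,b,c>0.
\]
As $(ac+b^2)/b=b+ac/b$ sweeps the interval $(b,\infty)$ as $a,c$ range over $\R_{>0}$, this forces $\chi\circ h_i$ to be constant on $(b,\infty)$ for every $b$, hence constant on $\R_{>0}$, and the identity $\chi(h_i(1))=\chi(h_i(1))^2$ pins the value to $1$. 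This leaves precisely $\chi(t)=\prod_{k\in I\setminus J}a_k^{\l_k}$, as claimed.

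The only implicit hypothesis is continuity (or measurability) of $\chi$, needed to write multiplicative functions on $\R_{>0}$ as power functions; this is the natural setting for the representation-theoretic use of $\chi_\l$ in the sequel, and everything else is bookkeeping with the totally positive commutation identities \eqref{EF}--\eqref{EKF}.
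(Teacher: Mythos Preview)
Your argument captures the right mechanism—your use of \eqref{EKF} for $i\in J$ is precisely what the paper's one-line justification preceding the lemma (``Since $P_J$ contains the full subgroup of $SL_2$ under the pinning\dots'') is pointing at, and the paper supplies no further proof. So in substance you are elaborating what the paper leaves implicit.

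There is, however, a domain issue you slide past. By definition $G_{>0}=U_{>0}^-T_{>0}U_{>0}^+$ consists of elements whose Gauss factors each have \emph{strictly positive} Lusztig coordinates; in particular the identity lies in neither $U_{>0}^+$ nor $U_{>0}^-$ (the map $\iota$ of Lemma~\ref{inj} has domain $\R_{>0}^N$), so none of the individual elements $y_j(c)$, $h_i(b)$, $x_j(a)$ on which you evaluate $\chi$ actually belong to $P_J^{>0}=G_{>0}\cap P_J$, and the factorisation $\chi(u^-tm)=\chi(u^-)\chi(t)\chi(m)$ is not literally licensed by multiplicativity on $P_J^{>0}$ alone. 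Since you already invoke continuity at the end, the cleanest repair is to extend $\chi$ by continuity to the nonnegative closure $P_J^{\ge0}$, which is again a semigroup and does contain all the one-parameter pieces and the identity; your identities \eqref{EK}, \eqref{EKF} and the additive law $y_j(c_1)y_j(c_2)=y_j(c_1+c_2)$ then apply verbatim there, and the rest of your computation goes through unchanged. A minor aside: connectedness of the Dynkin diagram is not needed in your unipotent step, since $a_{jj}=2$ already gives a nonzero exponent with $i=j$.
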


\subsection{Quantum groups $\cU_q(\g)$ and $\fD_q(\g)$}\label{sec:pre:nota}
For any finite dimensional complex semisimple Lie algebra $\g$, Drinfeld \cite{Dr} and Jimbo \cite{Ji1} associated to it a remarkable Hopf algebra $\cU_q(\g)$ known as \emph{quantum group}, which is certain deformation of the universal enveloping algebra. We follow the notations used in \cite{Ip7} for $\cU_q(\g)$ as well as the Drinfeld's double $\fD_q(\g)$ of its Borel part.

In the following, we assume again that $\g$ is of simple Dynkin type.
\begin{Def} \label{qi} Let $d_i$ be the multipliers \eqref{di}. We define
\Eq{
q_i:=q^{d_i},
} which we will also write as
\Eq{
q_l&:=q,\\ 
q_s&:=\case{q^{\frac12}&\mbox{if $\g$ is of type $B, C, F$},\\q^{\frac13}&\mbox{if $\g$ is of type $G$},}
}
for the $q$ parameters corresponding to long and short roots respectively.
\end{Def}

\begin{Def} We define $\fD_q(\g)$ to be the $\C(q_s)$-algebra generated by the elements
$$\{\bE_i, \bF_i,\bK_i^{\pm1}, \bK_i'^{\pm1}\}_{i\in I}$$ 
subject to the following relations (we will omit the obvious relations involving $\bK_i^{-1}$ and ${\bK_i'}^{-1}$ below for simplicity):
\Eq{
\bK_i\bE_j&=q_i^{a_{ij}}\bE_j\bK_i, &\bK_i\bF_j&=q_i^{-a_{ij}}\bF_j\bK_i,\label{KK1}\\
\bK_i'\bE_j&=q_i^{-a_{ij}}\bE_j\bK_i', &\bK_i'\bF_j&=q_i^{a_{ij}}\bF_j\bK_i',\label{KK2}\\
\bK_i\bK_j&=\bK_j\bK_i, &\bK_i'\bK_j'&=\bK_j'\bK_i', &\bK_i\bK_j' = \bK_j'\bK_i,\label{KK3}\\
&&[\bE_i,\bF_j]&= \d_{ij}\frac{\bK_i-\bK_i'}{q_i-q_i\inv},\label{EFFE}
}
together with the \emph{Serre relations} for $i\neq j$:
\Eq{
\sum_{k=0}^{1-a_{ij}}(-1)^k\frac{[1-a_{ij}]_{q_i}!}{[1-a_{ij}-k]_{q_i}![k]_{q_i}!}\bE_i^{k}\bE_j\bE_i^{1-a_{ij}-k}&=0,\label{SerreE}\\
\sum_{k=0}^{1-a_{ij}}(-1)^k\frac{[1-a_{ij}]_{q_i}!}{[1-a_{ij}-k]_{q_i}![k]_{q_i}!}\bF_i^{k}\bF_j\bF_i^{1-a_{ij}-k}&=0,\label{SerreF}
}
where $\dis [k]_q:=\frac{q^k-q^{-k}}{q-q\inv}$ is the $q$-number, and $\dis [n]_q!:=\prod_{k=1}^n [k]_q$ is the $q$-factorial.
\end{Def}

The algebra $\fD_q(\g)$ is a Hopf algebra with comultiplication
\Eq{
\D(\bE_i)&=1\ox \bE_i+\bE_i\ox \bK_i,&\D(\bK_i)&=\bK_i\ox \bK_i,\\
\D(\bF_i)&=\bF_i\ox 1+\bK_i'\ox \bF_i,&\D(\bK_i')&=\bK_i'\ox \bK_i',
}
We will not need the counit and antipode in this paper.

\begin{Def}
The quantum group $\cU_q(\g)$ is defined as the quotient
\Eq{
\cU_g(\g):=\fD_q(\g)/\<\bK_i\bK_i'=1\>_{i\in I},
}
and it inherits a well-defined Hopf algebra structure from $\fD_q(\g)$. 
\end{Def}
\begin{Rem} $\fD_q(\g)$ is the \emph{Drinfeld's double} of the quantum Borel subalgebra $\cU_q(\fb)$ generated by $\bE_i$ and $\bK_i$.
\end{Rem}
\begin{Def}We define the rescaled generators
\Eq{
\be_i &:=\left(\frac{\sqrt{-1}}{q_i-q_i\inv}\right)\inv \bE_i,&\bf_i &:=\left(\frac{\sqrt{-1}}{q_i-q_i\inv}\right)\inv \bF_i.\label{rescaleFF}
}
By abuse of notation\footnote{This is the convention adopted in e.g. \cite{GS, SS2}.}, we will also denote by $\fD_q(\g)$ the $\C(q_s)$-algebra generated by 
\Eq{\{\be_i, \bf_i, \bK_i, \bK_i'\}_{i\in I}
} and the corresponding quotient by $\cU_q(\g)$. The generators satisfy all the defining relations above except \eqref{EFFE} which is modified to be 
\Eq{\label{EFFE2}
[\be_i, \bf_j]=\d_{ij} (q_i-q_i\inv)(\bK_i'-\bK_i).
}
\end{Def}

In the split real case, we require $|q|=1$ and write
\Eq{q:=e^{\pi \sqrt{-1} b^2}} 
where $0<b<1$.
\begin{Def}
We define $\cU_q(\g_\R)$ to be the real form of $\cU_q(\g)$ induced by the star structure
\Eq{\be_i^*=\be_i,\tab \bf_i^*=\bf_i,\tab \bK_i^*=\bK_i,}
with $q^*=\over{q}=q\inv$, making it a Hopf-* algebra.
\end{Def}

Finally, we recall the Lusztig's transformations, which gives a braid group action $\cB$ on $\cU_q(\g)$ as automorphisms. Here we introduce the positive version constructed in \cite{Ip4}. We only need the simply-laced case in this paper.
\begin{Def} \label{lusztigbraid}
In the simply-laced case, the \emph{positive Lusztig's braid group action} is given by
$$
T_i: \cU_q(\g)\to \cU_q(\g),\tab i\in I,$$
such that
\Eq{
T_i(\bK_j)&:=\bK_j\bK_i^{-a_{ij}},\\
T_i(\be_i)&:=q\bK_i\bf_i,\\
T_i(\be_j)&:=\be_j,& a_{ij}&=0,\\
T_i(\be_j)&:=\frac{[\be_j,\be_i]_{q^{1/2}}}{q-q\inv},& a_{ij}&=-1,
}
where the $q$-commutator is defined as
\Eq{
[X,Y]_q:=qXY-q\inv YX,
}
and similarly for the $\bf_i$ generators, such that the image is still self-adjoint.
\end{Def}
A well known fact is that the Lusztig's braid group action induces the interchange of generators as follows:
\begin{Prop}\label{Ti0} If $\bi_0=(i_1,...,i_N)$ is a reduced word for the longest element $w_0\in W$, then the automorphism
\Eq{T_{\bi_0}:=T_{i_1}\cdots T_{i_N}}
is given by
\Eqn{
T_{\bi_0}(\be_i) &= \bf_i,\\
T_{\bi_0}(\bf_i) &= \be_i,\\
T_{\bi_0}(\bK_i) &= \bK_i\inv.\\
}
\end{Prop}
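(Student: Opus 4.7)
The plan is to prove the proposition in three stages: well-definedness of $T_{\bi_0}$, a weight argument pinning down the image up to a scalar, and a direct computation of that scalar via a convenient choice of reduced word.

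First I would verify that $T_{\bi_0}$ depends only on $w_0$, not on the particular reduced expression $\bi_0$. Since any two reduced words are connected by braid moves, this reduces to checking the braid relations $T_iT_j = T_jT_i$ when $a_{ij}=0$ and $T_iT_jT_i = T_jT_iT_j$ when $a_{ij}=-1$ on the generators $\be_k$, $\bf_k$, $\bK_k$; this is performed in \cite{Ip4} as part of establishing the positive braid group action.

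Next, from $T_i(\bK_j) = \bK_j \bK_i^{-a_{ij}}$, the $T_i$-action on the Cartan corresponds to the simple reflection $s_i$ on the root lattice. Iterating, $T_{\bi_0}$ induces $w_0$, and a direct computation using $w_0(\alpha_i) = -\alpha_{i^*}$ in the simply-laced setting yields the stated formula for $T_{\bi_0}(\bK_i)$. The same weight tracking shows $T_{\bi_0}(\be_i)$ has weight $w_0(\alpha_i)$. To show the image actually lies in $\cU_q(\fn_-)$ rather than being mixed, I would invoke the standard length argument: for the reduced decomposition $w_0 = s_{i_1}\cdots s_{i_N}$, the intermediate images $T_{i_1}\cdots T_{i_{k-1}}(\be_{i_k})$ are positive root vectors, and applying one more reflection whose length decreases forces the output into the negative part. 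Combined with the one-dimensionality of the relevant weight space of $\cU_q(\fn_-)$, this yields $T_{\bi_0}(\be_i) = c_i \bf_i$ for some scalar $c_i$.

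To pin down $c_i = 1$, I would use Proposition \ref{w0head} to pick a reduced word of $w_0$ starting with $i$, so that
\Eqn{T_{\bi_0}(\be_i) = T_{i_2}\cdots T_{i_N}(q\bK_i\bf_i).}
Propagating the Cartan factor $q\bK_i$ through the remaining $T_{i_k}$'s, the contributions to $\bK_i$ and the action on $\bf_i$ dualize and cancel exactly, thanks to the sign-free normalization $T_i(\be_i) = q\bK_i\bf_i$ and the rescaling \eqref{rescaleFF} defining $\be_i$ and $\bf_i$. The identity for $\bf_i$ follows by a symmetric argument using the opposite reduced word $\bi_0^{op}$, which is also a reduced expression for $w_0$ since $w_0^{-1}=w_0$. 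The main obstacle is this last bookkeeping: ensuring that the accumulated $q$-powers and Cartan elements telescope to the identity. Handling this by induction on $N$, together with the commutation relations \eqref{KK1}--\eqref{KK3}, keeps the computation tractable though unavoidably technical.
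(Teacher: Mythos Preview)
The paper does not supply a proof of this proposition; it is introduced as ``a well known fact'' and invoked only informally later in Section~\ref{sec:open}. Your proposal therefore goes beyond what the paper itself does.

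That said, your argument does not actually establish the statement as written, and the obstruction is not merely bookkeeping. Your weight computation correctly identifies that $T_{\bi_0}$ acts on the Cartan via $w_0$, but $w_0(\a_i)=-\a_{i^*}$, so what falls out is $T_{\bi_0}(\bK_i)=\bK_{i^*}^{-1}$ and $T_{\bi_0}(\be_i)\in\C\cdot\bf_{i^*}$, not $\bK_i^{-1}$ and $\bf_i$. In types with nontrivial Dynkin involution ($A_n$ for $n\geq 2$, $D_{2k+1}$, $E_6$) this is a genuine discrepancy with the formula you are trying to prove. Worse, already in rank one Definition~\ref{lusztigbraid} gives $T_{\bi_0}=T_1$ and $T_1(\be_1)=q\bK_1\bf_1\neq\bf_1$, so the ``scalar'' $c_i$ you set out to compute is not a scalar at all without first stripping off a Cartan factor. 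The proposition is evidently a loose paraphrase of the standard fact that $T_{w_0}$ exchanges $\cU_q(\fn_+)$ and $\cU_q(\fn_-)$ up to the Dynkin involution and Cartan twists; your outline is the right shape for that underlying fact, but the telescoping step would have to track the $i^*$ and the accumulated $\bK$-factors explicitly rather than claim they cancel to the identity.
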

\begin{Ex}\label{nonsimple}
In the special case when $\g=\sl_{n+1}$, we define for $1\leq i<j\leq n$,
\Eqn{
\be_{i,j}&:=T_{i}T_{i+1}\cdots T_{j-1}(\be_{j}),\\
\bf_{i,j}&:=T_{i}T_{i+1}\cdots T_{j-1}(\bf_{j}).
}
The index (shifted by one appropriately) coincides with the corresponding entries in the $(n+1)$-dimensional fundamental representation of $\cU_q(\sl_{n+1})$.  Under the natural order of positive roots 
\Eq{\c_k:= s_{i_1}...s_{i_{k-1}}\a_{i_k} \in \Phi_+,\tab k=1,...,N}
corresponding to the standard reduced word
\Eq{\label{standardAw0}
\bi_0=(1,2,3,...,n,1,2,...,n-1,...,1,2,1)
} of the longest element $w_0$, the (ordered) monomials of these generators form the \emph{positive PBW basis} of $\cU_q(\sl_{n+1})$ \cite{Ip4}.
\end{Ex}
%==============================================================
\subsection{Quantum torus algebra}\label{sec:pre:qtorus}
In this subsection we recall some definitions and properties concerning quantum torus algebra and their cluster realizations.
\begin{Def} A \emph{cluster seed} is a datum 
\Eq{
\bQ=(Q, Q_0, B, D)
} where $Q$ is a finite set, $Q_0\subset Q$ is a subset called the \emph{frozen subset}, $B=(b_{ij})_{i,j\in Q}$ a skew-symmetrizable $\frac12\Z$-valued matrix called the \emph{exchange matrix}, and $D=diag(d_j)_{j\in Q}$ is a diagonal $\Q$-matrix called the \emph{multiplier} with $d_j\inv\in \Z$, such that
\Eq{W:=DB=-B^TD} is skew-symmetric. 
\end{Def}
In the following, we will consider only the case where there exists a \emph{decoration} 
\Eq{\eta:Q\to I
} to the root index of a simple Dynkin diagram, such that $D=\mathrm{diag}(d_{\eta(j)})_{j\in Q}$ where $(d_i)_{i\in I}$ are the multipliers given in \eqref{di}.

Let $\L_\bQ$ be a $\Z$-lattice with basis $\{\vec[e_i]\}_{i\in Q}$, and let $d=\min_{j\in Q}(d_j)$. Also let
\Eq{\label{wij}w_{ij}=d_ib_{ij}=-w_{ij}.}
We define a skew symmetric $d\Z$-valued form $(-,-)$ on $\L_\bQ$ by 
\Eq{
(\vec[e_i], \vec[e_j]):=w_{ij}.
}

\begin{Def} Let $q$ be a formal parameter. We define the \emph{quantum torus algebra} $\cX_q^{\bQ}$ associated to a cluster seed $\bQ$ to be an associative algebra over $\C[q^d]$ generated by $\{X_i^{\pm 1}\}_{i\in Q}$ subject to the relations
\Eq{\label{XiXj}
X_iX_j=q^{-2w_{ij}}X_jX_i,\tab i,j\in Q.
}
The generators $X_i\in \cX_q^{\bQ}$ are called the \emph{quantum cluster variables}, and they are \emph{frozen} if $i\in Q_0$. We denote by $\bT_q^\bQ$ the non-commutative field of fractions of $\cX_q^\bQ$.

Alternatively, $\cX_q^\bQ$ is generated by $\{X_{\l}\}_{\l\in \L_\bQ}$ with $X_0:=1$ subject to the relations
\Eq{q^{(\l,\mu)}X_{\l}X_{\mu} = X_{\l+\mu},\tab \mu,\l\in\L_\bQ.}
\end{Def}
\begin{Nota}\label{xik}
Under this realization, we shall write 
\Eq{
X_i=X_{\vec[e_i]},
} and define the notation
\Eq{X_{i_1,...,i_k}:=X_{\vec[e_{i_1}]+...+\vec[e_{i_k}]},}
or more generally for $n_1,...,n_k\in \Z$,
\Eq{X_{i_1^{n_1},...,i_k^{n_k}}:=X_{n_1\vec[e_{i_1}]+...+n_k\vec[e_{i_k}]}.}
\end{Nota}
\begin{Def}\label{quiver} 
We associate to each cluster seed $\bQ=(Q,Q_0,B,D)$ with decoration $\eta$ a quiver, denoted again by $\bQ$, with vertices labeled by $Q$ and adjacency matrix $C=(c_{ij})_{i,j\in Q}$, where
\Eq{
c_{ij}=\case{b_{ij}&\mbox{if $d_i=d_j,$}\\w_{ij}&\mbox{otherwise.}}
}
We call $i\in Q$ a \emph{short} (resp. \emph{long}) \emph{node} if $q_i:=q^{d_i}=q_s$ (resp. $q_i=q_l=q$). An arrow $i\to j$ represents the algebraic relation 
\Eq{
X_iX_j=q_*^{-2}X_jX_i} where $q_*=q_s$ if both $i,j$ are short nodes, or $q_*=q$ otherwise.
\end{Def}
Obviously one can recover the cluster seed from the quiver and the multipliers.
\begin{Nota}
We will use squares to denote frozen nodes $i\in Q_0$ and circles otherwise. We will also use dashed arrows if $c_{ij}=\half$, which only occur between frozen nodes. 

We will represent the algebraic relations \eqref{XiXj} by thick or thin arrows (see Figure \ref{thick}) for display conveniences (thickness is \emph{not} part of the data of the quiver). Thin arrows only occur in the non-simply-laced case between two short nodes.

Finally, we may omit the superscript $\bQ$ in $\cX_q^{\bQ}$ or $\bT_q^{\bQ}$ if the datum / quiver is clear from the context.
\end{Nota}
   
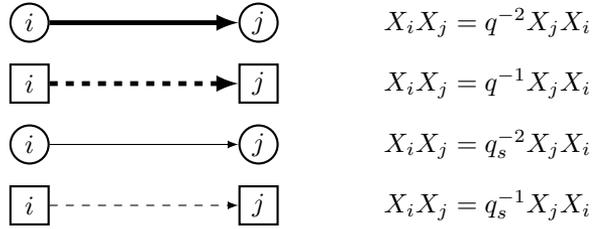
\begin{figure}[H]
\centering
  \begin{tikzpicture}[every node/.style={inner sep=0, minimum size=0.5cm, thick}, x=1cm, y=0.8cm]
\node[draw,circle] (i1) at(0,4) {$i$};
  \node[draw,circle] (j1) at (3,4) {$j$};
   \draw[vthick, ->](i1) to (j1);
\node at (6,4) {$X_iX_j = q^{-2} X_j X_i$}; 
\node[draw] (i2) at(0,3) {$i$};
  \node[draw] (j2) at (3,3) {$j$};
   \draw[vthick, dashed, ->](i2) to (j2);
\node at (6,3) {$X_iX_j = q^{-1} X_j X_i$};
\node[draw,circle] (i3) at(0,2) {$i$};
  \node[draw,circle] (j3) at (3,2) {$j$};
   \draw[thin, ->](i3) to (j3);
\node at (6,2) {$X_iX_j = q_s^{-2} X_j X_i$};
\node[draw] (i4) at(0,1) {$i$};
  \node[draw] (j4) at (3,1) {$j$};
   \draw[thin,dashed, ->](i4) to (j4);
\node at (6,1) {$X_iX_j = q_s^{-1} X_j X_i$};
  \end{tikzpicture}
  \caption{Arrows between nodes and their algebraic meaning.}\label{thick}
  \end{figure}

\begin{Def}\label{posrepX} A \emph{polarization} $\pi$ of the quantum torus algebra $\cX_q^\bQ$ on a Hilbert space $\cH=L^2(\R^M)$ is an assignment
\Eq{
X_i\mapsto e^{2\pi b L_i},\tab i\in Q,
}
where $L_i:=L_i(u_k, p_k, \l_k)$ is a linear combination of the position and momentum operators  $\{u_k, p_k\}_{k=1}^M$ satisfying the Heisenberg relations
\Eq{
[u_j, p_k]:=\frac{\d_{jk}}{2\pi\sqrt{-1}}
}
and complex parameters $\l_k \in \R$, such that they satisfy algebraically
\Eq{[L_i, L_j]=\frac{w_{ij}}{2\pi\sqrt{-1}}.} 
Each generator $X_i$ acts as a positive essentially self-adjoint operator on $\cH$ and gives a representation of $\cX_q^\bQ$ on $\cH$.
\end{Def}
\begin{Rem}\label{domain} The domains of these positive unbounded operators are taken to be the largest subspace $\cW$ of the Schwartz space of $L^2(\R^M)$ invariant under the exponential actions $e^{2\pi bL_i}$. They are essentially self-adjoint over the dense subspace 
\Eq{\{e^{-\bx^T \bA \bx+\b\cdot \bx}P(\bx)\;| \; \bA\mbox{ positive definite}, \b\in\C^M, P(\bx)\mbox{ polynomial}\}\subset \cW.}
These spaces are discussed in detail in e.g. \cite{FG3, Ip1, PT2} and in this paper we assume the algebraic relations among these operators are well-defined on $\cW$.
\end{Rem}

\begin{Nota}\label{ELnote}We will simplify notations and write
\Eq{\label{EL} 
e(L):=e^{\pi b L}
}
for $L$ a linear combination of position, momentum operators and scalars as above. Observe that we have
\Eq{
e(X)e(Y)=q^{\frac12} e(Y)e(X)
}
on $\cW$ whenever $[X,Y]=\frac{1}{2\pi\sqrt{-1}}$.

We also denote by
\Eq{\label{ELL}
[L_1]e(L_2) := e(L_1+L_2)+e(-L_1+L_2).
}
\end{Nota}
It is straightforward to check that
\begin{Prop}\label{standpolar} Let $\bQ$ be a cluster seed and $\cX_q^{\bQ}$ the quantum torus algebra. Then the following assignment 
$$X_i := e(-2p_i+\sum_{j\in Q} w_{ij} u_j)$$
acting on $L^2(\R^{|Q|})$ is a polarization. 
\end{Prop}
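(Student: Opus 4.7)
The plan is a direct verification of the two requirements in Definition \ref{posrepX}: that each $X_i$ is a positive essentially self-adjoint operator on $L^2(\R^{|Q|})$, and that the exponents satisfy the prescribed central commutator $[L_i, L_j] = \frac{w_{ij}}{2\pi\sqrt{-1}}$ from which the quantum torus relation \eqref{XiXj} follows.

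Concretely, I would first identify the linear exponent by matching $X_i = e^{2\pi b L_i}$ with $X_i = e(-2p_i + \sum_j w_{ij} u_j) = e^{\pi b(-2p_i + \sum_j w_{ij}u_j)}$, giving
\[
L_i \;=\; -p_i + \tfrac{1}{2}\sum_{j\in Q} w_{ij}\,u_j.
\]
Since $L_i$ is a real linear combination of the self-adjoint Heisenberg operators $u_j, p_j$ (all spectral parameters $\lambda_k$ are taken to be $0$ in this standard polarization), it is essentially self-adjoint on the Schwartz-type invariant domain $\cW$ of Remark \ref{domain}, and therefore $X_i = e^{2\pi b L_i}$ is a positive essentially self-adjoint operator by the spectral theorem.

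Next I would compute $[L_i, L_j]$ by expanding bilinearly. The $[p_i,p_j]$ and $[u_k,u_l]$ pieces vanish, so only the two mixed cross terms survive; each contributes a multiple of $\frac{w_{ji}}{4\pi\sqrt{-1}}$ or $\frac{w_{ij}}{4\pi\sqrt{-1}}$ via the Heisenberg relations $[u_j,p_k]=\frac{\delta_{jk}}{2\pi\sqrt{-1}}$. The skew-symmetry $w_{ji} = -w_{ij}$ of the exchange form $W=DB$ then collapses these two pieces into the single central scalar $\frac{w_{ij}}{2\pi\sqrt{-1}}$, as required. This is the one combinatorial step that uses structural input beyond generalities on Heisenberg operators.

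Finally, since the commutator is a central scalar, the Baker--Campbell--Hausdorff identity reduces to $e^A e^B = e^{[A,B]} e^B e^A$, and applying it to $X_iX_j = e^{2\pi b L_i}e^{2\pi b L_j}$ produces
\[
X_i X_j \;=\; e^{(2\pi b)^2 [L_i,L_j]} X_j X_i \;=\; e^{-2\pi\sqrt{-1}\, b^2\, w_{ij}} X_j X_i \;=\; q^{-2w_{ij}} X_j X_i,
\]
matching the defining relation \eqref{XiXj} of $\cX_q^{\bQ}$. No substantive obstacle arises: the only point of care is justifying the exchange of unbounded exponentials on the domain $\cW$, which is standard (see \cite{FG3, Ip1, PT2}); the remainder is the bookkeeping above. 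The content of the proposition is essentially the observation that the standard Weyl realization of a symplectic pairing via position--momentum operators is the prototype polarization of a quantum torus algebra.
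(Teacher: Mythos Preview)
Your proposal is correct and follows exactly the direct verification that the paper has in mind: the paper itself offers no proof beyond the phrase ``It is straightforward to check that'' preceding the proposition, and your computation of the Heisenberg commutator $[L_i,L_j]$ together with the BCH step is precisely that check. One minor remark: when you write that the two cross terms contribute $\frac{w_{ji}}{4\pi\sqrt{-1}}$ and $\frac{w_{ij}}{4\pi\sqrt{-1}}$, be careful with the relative sign --- a literal reading would have them cancel by skew-symmetry rather than add; the correct bookkeeping gives the two pieces opposite signs so that skew-symmetry doubles rather than cancels, and you may find an overall sign discrepancy with the paper's stated convention (likely a typo there, cf.\ the visible slip $w_{ij}=-w_{ij}$ in \eqref{wij}).
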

We call the assignment in Proposition \ref{standpolar} the \emph{standard polarization} of $\cX_q^{\bQ}$. Note that the position and momentum parts commute, so the action on the dense subspace $\cW$ can be explicitly written down as
\Eq{
X_i\cdot f(..., u_i, ...) = e^{\pi b\sum w_{ij}u_j}f(..., u_i+\sqrt{-1}b, ...)
}
where the shift is in the complex direction.
\begin{Lem}\label{polaru} Assume the rank of the skew-symmetric form of the lattice $\L_\bQ$ is $2M$. Then there is a polarization of $\cX_q^\bQ$ on $\cH=L^2(\R^M)$ and any polarization of $\cX_q^{\bQ}$ on $\cH$ is unitary equivalent by an $Sp(2M)$ action on the lattice $\L_\bQ$ (known as the Weil representation \cite{GS}).
\end{Lem}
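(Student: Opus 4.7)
The plan is to put the skew-symmetric form $(-,-)$ on the lattice $\L_\bQ$ into a symplectic normal form, use this to write down a polarization explicitly, and then invoke Stone--von Neumann together with the Weil representation for the uniqueness statement.

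First, by the skew-symmetric Smith normal form for integer bilinear forms, since the rank of $(-,-)$ on $\L_\bQ$ is $2M$ one obtains a $\Z$-basis
\[
\{\bv_1,\dots,\bv_M,\,\bw_1,\dots,\bw_M,\,\bz_1,\dots,\bz_{|Q|-2M}\}
\]
of $\L_\bQ$ and positive scalars $c_1,\dots,c_M\in d\Z$ such that $(\bv_i,\bw_j)=c_i\d_{ij}$, all other pairings among the $\bv$'s and $\bw$'s vanish, and the $\bz_k$ span the radical $\L_0:=\ker(-,-)$. Writing $\L_\bQ=\L_0\oplus\L_{\mathrm{sym}}$, the generators $\{X_\l\}_{\l\in\L_0}$ are central in $\cX_q^\bQ$, while those indexed by $\L_{\mathrm{sym}}$ obey an exponentiated Heisenberg algebra of rank $2M$.

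For existence, take $\cH=L^2(\R^M)$ with canonical operators $u_k,p_k$ and set $L_{\bv_i}:=\a_iu_i$, $L_{\bw_i}:=\b_ip_i$, and $L_{\bz_k}:=\l_k\in\R$, where $\a_i,\b_i$ are real constants chosen so that $[L_{\bv_i},L_{\bw_j}]$ matches $(\bv_i,\bw_j)/(2\pi\sqrt{-1})$ under the sign convention of the paper. Extending $\l\mapsto L_\l$ by $\R$-linearity and defining $X_\l:=e^{2\pi bL_\l}$ then yields a polarization on $L^2(\R^M)$.

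For uniqueness, let $\pi_1,\pi_2$ be two polarizations on $\cH$. Each defines an $\R$-linear bracket-preserving map $\L_\bQ\otimes_\Z\R\to\mathrm{Heis}_M\oplus\R\cdot\mathrm{Id}$. On $\L_0$ both images are necessarily central in $\mathrm{Heis}_M$, hence scalar, and agree once one fixes a common central character. On the symplectic part $\L_{\mathrm{sym}}\otimes\R$ both maps are injective symplectic embeddings into the standard Heisenberg algebra, so they differ by a unique $g\in Sp(2M,\R)$ acting on $\L_{\mathrm{sym}}\otimes\R$. The Weil (metaplectic) representation then supplies a projective unitary operator $U_g$ on $L^2(\R^M)$ implementing $g$ as an automorphism of $\mathrm{Heis}_M$, and this $U_g$ intertwines $\pi_1$ with $\pi_2$. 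The main analytic obstacle is that the $L_\l$ are unbounded self-adjoint operators; one carries out all identifications on the Schwartz-type invariant dense core $\cW$ of Remark \ref{domain}, where the relations become purely algebraic, and then extends the Weil intertwiner to a genuine unitary on $L^2(\R^M)$ by standard density arguments.
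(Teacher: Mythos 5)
The paper does not actually prove Lemma~\ref{polaru}; it is stated with a reference to \cite{GS} and no argument is given, so there is no ``paper's proof'' to compare against. Your proposal is the expected standard proof and is essentially correct: reduce the form to symplectic normal form (here the full integer Smith normal form is more than you need — a real symplectic basis of $\L_\bQ\otimes\R$ suffices, since the statement is about real operators and not about $\Z$-lattice invariants), use that to write down the standard-polarization existence, and invoke Stone--von Neumann together with the metaplectic/Weil representation for uniqueness. Two small points you should tighten. First, the ``differ by a unique $g\in Sp(2M,\R)$'' step relies on both restricted maps $\pi_i|_{\L_{\mathrm{sym}}\otimes\R}$ being symplectic \emph{isomorphisms} onto the $2M$-dimensional quotient $\mathrm{span}\{u_k,p_k\}$; this follows because they preserve a nondegenerate form between spaces of the same dimension, but you should state that explicitly rather than saying ``injective symplectic embeddings.'' Second, the uniqueness statement only holds for a \emph{fixed} choice of central parameters $\l_k$ on the radical $\L_0$, since polarizations with different central characters are genuinely inequivalent; you do note this in passing (``agree once one fixes a common central character''), but it is worth making clear that this is an implicit hypothesis of the lemma, consistent with how the paper later introduces the parameters $e(\l_i)$ into the polarization of the $X_{e_i^0}$. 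With those clarifications the argument is complete.
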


Next we recall the notion of quantum cluster mutations.
\begin{Def}\label{qmut} Given a cluster seed $\bQ=(Q,Q_0,B, D)$ and an element $k\in Q\setminus Q_0$, a \emph{cluster mutation in direction $k$} is another seed $\bQ'=(Q', Q_0', B', D')$ with $Q=Q'$, $Q_0=Q_0'$ and
\Eq{
b'_{ij} &= \case{-b_{ij}&\mbox{if $i=k$ or $j=k$},\\ b_{ij}+\frac{b_{ik}|b_{kj}|+|b_{ik}|b_{kj}}{2}&\mbox{otherwise},}\\
d'_{i}&=d_i.
}

The cluster mutation in direction $k$ induces an isomorphism $\mu_k^q:\bT_q^{\bQ'}\to \bT_q^{\bQ}$ called the \emph{quantum cluster mutation}, defined by
\Eq{
\mu_k^q(X_i')=\case{X_k\inv&\mbox{if $i=k$},\\ \dis X_i\prod_{r=1}^{|b_{ki}|}(1+q_i^{2r-1}X_k)&\mbox{if $i\neq k$ and $b_{ki}\leq 0$},\\\dis X_i\prod_{r=1}^{b_{ki}}(1+q_i^{2r-1}X_k\inv)\inv&\mbox{if $i\neq k$ and $b_{ki}\geq 0$},}
}
where we denote by $X_i'$ the quantum cluster variables of $\cX_q^{\bQ'}$.
\end{Def}

Finally we recall the notion of \emph{amalgamation} of two quantum torus algebras \cite{FG2}.
\begin{Def}\label{amal}
Let $\bQ, \bQ'$ be two cluster seeds and let $\cX_q^{\bQ}, \cX_q^{\bQ'}$ be the corresponding quantum torus algebras. Let $S\subset Q_0$ and $S'\subset Q_0'$ be subsets of the frozen vertices with a bijection $\phi:S\to S'$ such that $d_{\phi(i)}'=d_i$ for $i\in S$. Then the \emph{amalgamation} of $\cX_q^{\bQ}$ and $\cX_q^{\bQ'}$ along $\phi$ is identified with the subalgebra 
\Eq{
\til{\cX}_q\sub\cX_q^{\bQ}\ox \cX_q^{\bQ'}} generated by the variables $\{\til{X}_i\}_{i\in Q\cup Q'}$ where 
\Eq{
\til{X}_i&:= \case{
X_i\ox 1&\mbox{if $i\in Q\setminus S$},\\
1\ox X_i'&\mbox{if $i\in Q'\setminus S'$},
}\\\nonumber
\til{X}_i= \til{X}_{\phi(i)} &:= X_i\ox X_{\phi(i)}'\tab i\in S.
}
\end{Def}

Equivalently, the amalgamation of the corresponding quivers $\bQ,\bQ'$ is a new quiver 
\Eq{
\til{\bQ}:=\bQ *_\phi \bQ'} constructed by gluing the frozen vertices $S$ along $\phi$, defrozening those vertices that are glued, and removing any resulting 2-cycles. We obviously have
\Eq{
\til{\cX}_q\simeq \cX_q^{\til{\bQ}}.}
%==============================================================
\section{Positive representations and cluster realization of $\cU_q(\g)$}\label{sec:pos}
The original \emph{positive representations} of $\cU_q(\g_\R)$ is constructed in \cite{FI, Ip2, Ip3} where the generators of the quantum group are represented by positive essentilly self-adjoint operators on the Hilbert space $L^2(\R^N)$ where $N=l(w_0)$. The representations are constructed by certain quantization of the regular representation of $G$ on the functions of the flag variety $\cB:=B_-\setminus G$ (where we used the right cosets) by multiplication on the right.

\begin{Rem}
In later sections, we sometimes call this the \emph{maximal} positive representations, referring to the codimension of $B_-\subset G$, to distinguish it from the more general \emph{parabolic} positive representations. The maximal positive representation is a special case of the parabolic positive representation where we take the parabolic subgroup to be $P_J=B_-$, i.e. we take the subset $J\subset I$ to be the empty set $J=\emptyset$.
\end{Rem}

\subsection{Construction and properties of the positive representations}\label{sec:pos:pos}
Although the positive representations can be canonically defined using an embedding of $\fD_q(\g)$ into a quantum torus algebra \cite{GS, Ip7} (see Section \ref{sec:pos:qemb} for a summary), it is instructive to recall the original motivation of the construction in order to draw parallel with the construction of the parabolic positive representations later.

Recall that for $G$ a split real simple Lie group, the positive representations are originally constructed by certain quantization of the principal series representations of $G$ following the steps below:
\begin{enumerate}
\item First, take the regular representation of $G_{>0}$ acting on $C^\oo(B_{>0}^-\setminus G_{>0})$ by right multiplication, together with the action of a character $\chi_\l$ on the maximal split torus parametrized by $\l\in P_{\R_{>0}}$ in the $\R_{>0}$-span of the dominant weights lattice.
\item Obtain the infinitesimal action of $\g$ as differential operators on $C^\oo(B_{>0}^-\setminus G_{>0})$ using the Lusztig coordinates \eqref{w0coord} of the totally positive part.
\item Apply the formal Mellin transformation and convert the differential operators into finite difference operators, acting on the same coordinates.
\item Finally, we perform a \emph{twisted quantization} to define positive essentially self-adjoint operators acting on $L^2(B_{>0}^-\setminus G_{>0})\simeq L^2(\R^N)$ where $N=l(w_0)$. These operators represent the split real quantum group $\cU_q(\g_\R)$.
\end{enumerate}
Here the twisted quantization is done by first quantizing the operators, and then introduce an \emph{analytic continuation} by rescaling and shifting the variables with certain multiples of $\sqrt{-1}\frac{b+b\inv}{2}$. In particular, due to the involvement of $b\inv$, the positive representations do not have the usual classical limit, even though one can talk about its semi-classical limit by taking log to the finite difference operators.
\begin{Rem}
As remarked in \cite{GS}, the classical principal series representation can naturally be viewed as action on the larger space
$$L^2(U_-\setminus G)$$
for some unipotent radical $U_-\subset B_-$, together with a Cartan action $H$ on the left, such that the classical principal series representations are the irreducible components of the decomposition of the $H$-action by its central characters.
\Eq{
L^2(U_-\setminus G) = \int_{P_{\R_{>0}}}^\o+ \cP_\l d\l.
}
This decomposition naturally lifts to the quantum case and gives a natural identification of the Hilbert space $\cH$ underlying the positive representations of $\cU_q(\g_\R)$.
\end{Rem}

Let $\{u_k, p_k\}_{k=1,...,N}$ be the standard position and momentum operators acting on the Hilbert space $L^2(\R^N, du_1...du_N)$ with the same coordinates.

We summarize the results of the construction as follows:
\begin{Thm}\cite{FI, Ip2, Ip3}\label{thmpos} Let $\bi_0=(i_1,...,i_N)$ be a fixed reduced word of $w_0$. Then there exists a family of irreducible representations $\pi_\l^{\bi_0}$ of $\cU_q(\g_\R)$ on 
\Eq{\cP_{\l}^{\bi_0}\simeq L^2(\R^N, du_1...du_N)} parametrized by $\l\in P_{\R_{>0}}$, or equivalently by $\l=(\l_1,...,\l_n)\in \R_{>0}^{rank(\g)}$, such that 
\begin{itemize}
\item The generators $\be_i,\bf_i,\bK_i$ are represented by positive essentially self-adjoint operators acting on $L^2(\R^{N})$.
\item For any reduced words $\bi_0$ and $\bi_0'$ of $w_0$, we have unitary equivalence
\Eq{\cP_\l\:= \cP_\l^{\bi_0}\simeq \cP_\l^{\bi_0'}}
where each Coxeter move of the reduced words induces a unitary transformation (by quantum cluster mutations via the quantum dilogarithm function).
\item The generators are explicitly given as follows\footnote{Compared with \cite{Ip3} in the non-simply-laced case, we rescaled the variables $u_k$ by $\sqrt{d_{i_k}}$.}:
\Eq{
\pi_\l^{\bi_0}(\bK_i)&=e\left(-2\l_i-d_i\sum_{j=1}^N a_{i, i_j}u_j\right),\label{KK}\\
\pi_\l^{\bi_0}(\bf_i)&=\pi(f_i^-)+\pi(f_i^+):=\sum_{k:i_k=i}\pi(f^{k,-})+\sum_{k:i_k=i}\pi(f^{k,+}),\label{FF}
}
where
\Eq{\pi(f^{k,\pm}) := e\left(\pm\left(d_{i_k}\sum_{j=1}^{k-1} a_{i_k,i_j} u_j +d_{i_k} u_k+2\l_{i_k}\right)+2p_k\right). \label{FF2}}
\item The $\bE_i$ generators corresponding to the right most index $i_N$ of $\bi_0$ is given explicitly by
\Eq{\label{EE} 
\pi_\l^{\bi_0}(\be_{i_N})=\pi(e_{i_N}^+)+\pi(e_{i_N}^-):=e(d_{i_N}u_N-2p_N)+e(-d_{i_N}u_N-2p_N),
}
while for the other generators with $i\neq i_N$ we have in general
\Eq{
\pi_\l^{\bi_0}(\be_i)=\pi(e_i^+)+\pi(e_i^-),\label{EE2}
}
where $\pi(e_i^\pm)$ are obtained from $e(\pm d_{i_N}u_N-2p_N)$ by a sequence of quantum cluster mutations.
\end{itemize}
\end{Thm}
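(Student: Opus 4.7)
The plan is to follow the four-step program sketched just before the theorem statement: start from the geometric right action of $G_{>0}$ on $B_{>0}^-\setminus G_{>0}$ twisted by the character $\chi_\l$, differentiate to obtain infinitesimal operators, apply the Mellin transform to convert them into finite-difference operators, and finally perform the twisted quantization to produce positive self-adjoint operators on $L^2(\R^N)$.

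Concretely, I parametrize $B_{>0}^-\setminus G_{>0}\simeq \R_{>0}^N$ by the Lusztig coordinates \eqref{w0coord} associated to $\bi_0$, so that the regular representation is realized on $L^2(\R_{>0}^N, \prod da_k/a_k)$, which becomes $L^2(\R^N)$ under the Mellin change of variables $a_k = e^{u_k}$. The infinitesimal action of $\bK_i$ is read off from \eqref{EK}; that of $\bf_i$ is obtained by sliding $y_i(c)$ across the product $x_{i_1}(a_1)\cdots x_{i_N}(a_N)$ using \eqref{EF}, \eqref{EKF}, and the Coxeter-move identities \eqref{EE0}, \eqref{EE1} (and their higher-rank analogues), returning to Gauss form and differentiating in $c$ at $c=0$; and that of $\be_{i_N}$ is obtained by direct differentiation in $a_N$. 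After Mellin transform and the twisted quantization $\partial_{u_k}\mapsto 2\pi b\, p_k$, combined with shifts of $u_k$ by specific multiples of $\frac{\sqrt{-1}}{2}(b+b\inv)$, each monomial summand takes the form of a positive exponential $e(L)$ from Notation \ref{ELnote}, reproducing \eqref{KK}, \eqref{FF}--\eqref{FF2} and \eqref{EE}. For the remaining $\be_i$ with $i\neq i_N$, I invoke Proposition \ref{w0head} to pick a reduced word of $w_0$ ending in $i$, run the same construction there, and transport the result back to $\bi_0$ along a chain of Coxeter moves.

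The verification then splits into four checks. Positivity and essential self-adjointness are immediate from the form $\sum e(L^\pm)$ on the invariant dense domain $\cW$ of Remark \ref{domain}. The defining relations of $\cU_q(\g_\R)$ --- notably the Serre relations \eqref{SerreE}, \eqref{SerreF} and the cross-commutator \eqref{EFFE2} --- reduce to $q$-commutator identities among exponentials, which follow mechanically from the Heisenberg relations and are the quantum analogues of classical identities preserved by the twist. Irreducibility follows from the fact that the weak closure of the polynomial algebra in $e^{2\pi b u_k}$ and $e^{2\pi b p_k}$ is maximal abelian, so any bounded operator commuting with the image of $\cU_q(\g_\R)$ must be scalar.

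The main obstacle is the last point: the unitary equivalence $\cP_\l^{\bi_0}\simeq\cP_\l^{\bi_0'}$ under any change of reduced word. By Tits' theorem the two reduced words of $w_0$ differ by a sequence of Coxeter moves, and each such move is realized, at the level of the basic quiver $\bD(\bi_0)$, by a sequence of quantum cluster mutations (see Section \ref{sec:pos:basicq}), which are unitarily implemented by conjugation with the non-compact quantum dilogarithm $\Phi_b$. The delicate step is to show that these intertwiners really implement the \emph{same} quantum group action under the two parametrizations; this is precisely what motivates the quiver-theoretic formulation reviewed below, which encodes the generators of $\fD_q(\g)$ and the Coxeter moves within one coherent combinatorial framework.
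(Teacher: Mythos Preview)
The paper does not prove this theorem: it is stated as a summary of results established in \cite{FI, Ip2, Ip3}, with the four-step outline you follow given only as motivation just before the statement. Your sketch is faithful to that outline and to the content of the cited references, so there is no discrepancy to report.

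One small comment on emphasis: your irreducibility argument via maximal abelianness of the Weyl algebra is fine in spirit, but note that what is actually needed (and what is done in the references and later in this paper, e.g.\ in the proof for the minimal case in Section~\ref{sec:min:sln}) is to recover the individual $e(u_k)$ and $e(p_k)$ as rational functions of the images of the generators $\be_i,\bf_i,\bK_i$; invertibility of the Cartan matrix handles the $e(u_k)$, and then the $\be_i$ or $\bf_i$ yield the shifts. Your formulation glosses over this recovery step, which is the substantive part.
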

%==============================================================
\subsection{Basic quivers}\label{sec:pos:basicq}
Following the suggestion of \cite{SS1}, in \cite{Ip7} we gave a cluster realization of the positive representations, where we construct an explicit embedding of the Drinfeld's double $\fD_q(\g)$ into the quantum torus algebra $\cX_q^{\bD}$ of certain quiver $\bD$. The polarization of $\cX_q^{\bD}$ then recover the positive representations up to unitary equivalence.

In this subsection, we recall the construction of $\bD$, which is the double of the so-called \emph{basic quiver} \cite{Ip7}. Here we follow (and modify) the extended approach described in \cite{GS} which systematically includes the extra vertices $\{e_i^0\}$ that were originally added manually in \cite{Ip7}.

\begin{Def} Let $i,k\in I$. The \emph{elementary quiver} $\over{\bJ}_k(i)$ consists of
\begin{itemize}
\item The vertex set
\Eq{Q=Q_0=(I\setminus\{i\})\cup \{i_l\}\cup \{i_r\}\cup \{k_e\}} 
which are all frozen;
\item The multipliers $D=(d_j)_{j\in Q}$ which is the pull-back of the multipliers \eqref{di} from $I$ under the natural projection $Q\to I$ sending $\{i_l,i_r\}$ to $i$ and $k_e$ to $k$; and 
\item The adjancy matrix $C=(c_{ij})$ which is defined to be
\Eq{
c_{i_l, j}&=c_{j, i_r}=\frac{d_ia_{ij}}{2},\tab j\in I\setminus\{i\},\\
c_{i_l,i_r}&=c_{i_r,k_e}=c_{k_e,i_l}=1.
}
\end{itemize}
The vertices are organized in \emph{levels}, such that the vertex $j\in I\setminus\{i\}$ is placed at level $j$, $\{i_l,i_r\}$ are placed on the left and right of level $i$, and $k_e$ is placed on an extra level labeled by $k'$. We have dashed arrows between the vertex $j$ and $\{i_l, i_r\}$.

Intuitively, we call the set $(I\setminus\{i\})\cup \{i_l\}$ the \emph{left frozen vertices}, and $(I\setminus\{i\})\cup \{i_r\}$ the \emph{right frozen vertices}.
\end{Def}
\begin{Def} We define $\bJ(i)$ to be the full subquiver of $\over{\bJ}_k(i)$ obtained by removing the vertex $\{k_e\}$.
\end{Def}
\begin{Def}\label{auxq} Let $\bi=(i_1,...,i_m)$ be a reduced word. Let
\Eq{\b_j:=s_{i_m}s_{i_{m-1}}\cdots s_{i_{j+1}}(\a_{i_j}),\tab \a_i\in \D_+, j=1,...,m\label{posroots}}
be a chain of positive roots. 

We define the \emph{auxiliary quiver} $\bH(\bi)$ to have frozen vertex set $I$, labelled by $\{i_e\}_{i\in I}$ and placed on level $i'$, and the same multipliers \eqref{di}. The adjancy matrix $C=(c_{ij})$ is given by
\Eq{
c_{ij}:=s_{ij}\frac{d_ia_{ij}}{2},
}
where
\Eq{
s_{ij}:=\case{\mathrm{sgn}(r-s)&\b_s=\a_i\mbox{ and } \b_r = \a_j,\\0&\mbox{ otherwise.}}
}
\end{Def}

\begin{Def}Let $\bi=(i_1,...,i_m)$ be a reduced word. The \emph{basic quiver} $\bQ(\bi)$ is constructed by amalgamating the elementary quivers
\Eq{
\bQ(\bi):= \bJ_\bi^\#(i_1)*\bJ_\bi^\#(i_2)*\cdots*\bJ_\bi^\#(i_m)*\bH(\bi),
}
where
\Eq{
\bJ_\bi^\#(i_j):=\case{\over{\bJ}_k(i_j)& \mbox{if }\b_j=\a_k,\\\bJ(i_j)&\mbox{otherwise,}
}
}
and successively the right frozen vertices of $\bJ_\bi^\#(i_{k-1})$ are amalgamated to the left frozen vertices of $\bJ_\bi^\#(i_{k})$ on the same level. The vertices of $\bH(\bi)$ are amalgamated to the corresponding extra nodes $\{k_e\}$ of $\over{\bJ}_k(i_j)$.

Finally without loss of generality we remove all the vertices that are disjoint from the quiver. We redefine $Q_0$ so that the frozen vertices in the resulting quiver consists of the left- and right-most vertices of each level, as well as the extra vertices $\{i_e\}$.
\end{Def}

\begin{Prop} The quiver $\bQ(\bi_0)$ coincides with the basic quiver described in \cite{Ip7} for a specific choice of a reduced word $\bi_0$ of $w_0$.
\end{Prop}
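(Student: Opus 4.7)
The statement is a ``matching'' proposition: two constructions of a quiver produce the same output. My plan is to fix the specific reduced word $\bi_0$ of $w_0$ used in \cite{Ip7} (e.g., the standard one from Example \ref{nonsimple} in type $A_n$, and its analogues in other types), and compare the two quivers datum by datum: first the vertex set with its frozen/non-frozen partition, then the multiplier datum $D$, and finally the adjacency matrix. Since cluster seeds are determined by these three pieces of data (Definition \ref{quiver}), agreement on all three proves the proposition.

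For the vertex set, I would unfold the amalgamation $\bJ_{\bi_0}^\#(i_1)*\cdots*\bJ_{\bi_0}^\#(i_N)*\bH(\bi_0)$. Each elementary quiver $\bJ(i_j)$ contributes one vertex on level $i_j$ together with dashed-edge decorations on each of the other levels $i\in I\setminus\{i_j\}$; after successive amalgamation of right frozen vertices of $\bJ_{\bi_0}^\#(i_{k-1})$ to left frozen vertices of $\bJ_{\bi_0}^\#(i_k)$, each level $i\in I$ becomes a horizontal string whose number of vertices equals the number of occurrences of $i$ in $\bi_0$ (plus one boundary vertex at each end which remains frozen). Removing disjoint vertices — as stipulated after the definition — leaves exactly the string-shaped rows described in \cite{Ip7}. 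The extra vertices $\{i_e\}_{i\in I}$ from $\bH(\bi_0)$, amalgamated into the distinguished $k_e$-nodes of those $\bJ_{\bi_0}^\#(i_j) = \over{\bJ}_k(i_j)$ for which $\beta_j=\alpha_k$, need to be shown to correspond precisely to the manually-added vertices $\{e_i^0\}$ of \cite{Ip7}. This is where I would use the key fact that the chain of positive roots \eqref{posroots} produced by $\bi_0$ is a bijection onto $\Phi_+$, and in particular hits each simple root $\alpha_k$ exactly once, giving exactly one extra vertex per $i \in I$, placed on a separate level, in agreement with \cite{Ip7}.

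For the multipliers, pull-back along the projection $Q\to I$ is built into both constructions, so they match tautologically. The substantive content is therefore the adjacency matrix. Within each level, the solid left-to-right arrows at level $i$ inherited from successive $\bJ(i_j)$'s reproduce the horizontal arrows of \cite{Ip7}; between two adjacent levels $i,j$, the dashed contributions from the $c_{i_l,j}=c_{j,i_r}=d_ia_{ij}/2$ entries of each $\bJ(i_j)$, after amalgamation, combine into exactly the dashed edges between adjacent frozen vertices on neighboring levels in \cite{Ip7}. The tricky part is the arrows incident to each extra vertex $k_e$: these come from two sources, namely the triangle $c_{i_l,i_r}=c_{i_r,k_e}=c_{k_e,i_l}=1$ inside each $\over{\bJ}_k(i_j)$ and the edges in $\bH(\bi_0)$ weighted by $s_{ij} d_i a_{ij}/2$.

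The main obstacle is verifying this last point: that the signs $s_{ij}$, defined via the relative order in which $\alpha_i$ and $\alpha_j$ appear in the chain $\{\beta_k\}$, reproduce precisely the directions of the arrows among the extra vertices in \cite{Ip7}. I would handle this by specializing to the chosen $\bi_0$, computing the ordering in which simple roots appear as $\beta_k$ (for the standard word \eqref{standardAw0}, the simple roots $\alpha_i$ occur in a controlled order dictated by the final block $(\ldots,1,2,1)$ at the end of $\bi_0$), and checking the resulting sign pattern against the adjacency between the $\{e_i^0\}$ vertices in \cite{Ip7} directly. A clean way to organize this check is to observe that $s_{ij}$ is antisymmetric in $(i,j)$ and depends only on which of $\alpha_i,\alpha_j$ appears later in $\{\beta_k\}$; this reduces the verification to a combinatorial statement about the last occurrences of simple reflections in $\bi_0$, which for the specific word of \cite{Ip7} can be read off by inspection. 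The remaining cases in non-simply-laced types are handled by using the analogous standard reduced words, with the multipliers $d_i$ producing the correct thin/dashed/solid arrow weights as in Figure \ref{thick}.
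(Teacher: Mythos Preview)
Your approach is correct but takes a more computational route than the paper. The paper's proof is very brief: it observes that the amalgamated subquiver built from the $\bJ(i_j)$'s (i.e., everything except the extra vertices $\{k_e\}$) is literally the same object as in \cite{Ip7} by construction, and then asserts that the arrows incident to the extra vertices agree because in \cite{Ip7} those arrows were \emph{defined} via the positive representation formulas for the $e_i$ generators, and those formulas are governed precisely by how Coxeter moves transform the chain of positive roots $\beta_j$ in \eqref{posroots}. In other words, the paper identifies the conceptual reason the two rules for placing the extra-vertex arrows coincide, and this works uniformly for any reduced word $\bi_0$.

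You instead propose to fix the specific $\bi_0$ used in \cite{Ip7}, compute the chain $\{\beta_k\}$ explicitly, read off the signs $s_{ij}$ from the order in which the simple roots appear, and match them against the adjacency in \cite{Ip7} by inspection, type by type. This is a valid verification strategy and your outline of the vertex set, multipliers, and intra/inter-level arrows is accurate. What you lose is the uniform explanation: your argument is a case check tied to particular words, whereas the paper's one-line appeal to the Coxeter-move interpretation of \eqref{posroots} explains \emph{why} the sign rule $s_{ij}$ in Definition~\ref{auxq} is the right one --- it encodes exactly the mutation history that produced the $e_i^0$ attachments in \cite{Ip7}. Your approach buys concreteness and does not require the reader to have internalized the cluster-mutation mechanics of \cite{Ip7}; the paper's buys generality and brevity.
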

\begin{proof} The elementary part $\bJ(\bi)$ (the full subquiver without the extra vertices $\{k_e\}$) is identical to the one used in \cite{Ip7}. The arrows concerning the extra vertices $\{k_e\}$ is governed by the construction of positive representations, and is equivalent to how the Coxeter moves of reduced words modify the chain of positive roots \eqref{posroots}.
\end{proof}

\begin{Def}\label{double}
The \emph{symplectic double} of $\bQ(\bi)$ is defined by amalgamating the basic quiver corresponding to the opposite words along the frozen vertices on the left side of $\bQ(\bi)$ and the right side of $\bQ(\bi^{op})$, namely
\Eq{
\bD(\bi) := \bQ(\bi^{op})*\bQ(\bi).
}
\end{Def}

\begin{Rem}
In \cite{Ip7} we call $\bQ(\bi^{op})$ the \emph{mirror quiver}. It is obtained from $\bQ(\bi)$ by a horizontal mirror reflection, followed by reversing all the arrows.
\end{Rem}
\begin{Nota}
Following \cite{Ip7} (mainly for typesetting purpose\footnote{In \cite{GS}, the quiver $\bQ(\bi)$ here is denoted by $\over{\bJ}(\over{\bi})$ and the corresponding labeling is given by $f_i^j:=\mat{i\\j}$ and $e_i^0:=\mat{i\\-\oo}$ instead.}), we label the vertices successively from left to right on the same level by 
\Eq{
\{f_i^j \: i\in I, j=0,...,n_i\}
} where $i$ is the level of the vertex, and $n_i$ is the number of occurrences of $i$ in the reduced word $\bi$. Note that the labels $f_i^j$ (except the right-most one) are naturally ordered according to the reduced word $\bi$.

The extra vertices $\{i_e\}$ are labeled by $\{e_i^0\}_{i\in I}$.

We label the vertices of the quiver $\bQ(\bi^{op})$ by 
\Eq{
\{f_i^{-j}\},\mbox{ and }\{e_i^0\},} and write 
\Eq{|f_i^{-j}|:=f_i^j.}
Note that $\bQ(\bi)$ and $\bQ(\bi^{op})$ share the same vertices $\{f_i^0\}$ and $\{e_i^0\}$ in the double quiver $\bD(\bi)$.
\end{Nota}

\begin{Ex} Consider $\g=\sl_4$ and let $\bi=(3,2,1)$ be a reduced word. Then only $\b_1:=\a_1$ is a simple root and $\bH(\bi)$ is trivial. Hence the basic quiver is the amalgamation of 
\Eq{
\bQ(\bi)=\bJ(3)*\bJ(2)*\over{\bJ}_1(1),
}
see Figure \ref{basicq}.

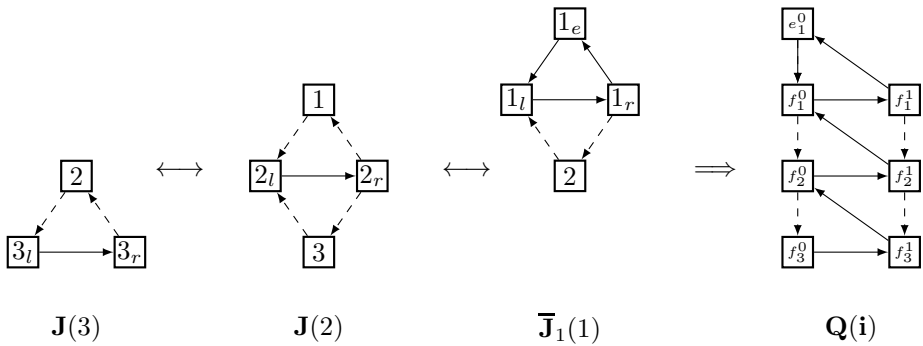
\begin{figure}[H]
\begin{tikzpicture}[baseline=(3), every node/.style={inner sep=0, minimum size=0.4cm, thick, fill=white, draw}, x=0.7cm, y=1cm]
\node (1) at (0,0) {$3_l$};
\node (2) at (2,0) {$3_r$};
\node (3) at (1,1) {$2$};
\drawpath{1,2}{};
\drawpath{2,3,1}{dashed};
\node at (1,-1) [draw=none]{$\bJ(3)$};
\end{tikzpicture}
$\corr$\tab
\begin{tikzpicture}[baseline=(5), every node/.style={inner sep=0, minimum size=0.4cm, thick, fill=white, draw}, x=0.7cm, y=1cm]
\node (4) at (1,1) {$3$};
\node (5) at (0,2) {$2_l$};
\node (6) at (2,2) {$2_r$};
\node (7) at (1,3) {$1$};
\node at (1,0) [draw=none]{$\bJ(2)$};
\drawpath{5,6}{};
\drawpath{6,7,5}{dashed};
\drawpath{6,4,5}{dashed};
\end{tikzpicture}
\tab$\corr$
\begin{tikzpicture}[baseline=(8), every node/.style={inner sep=0, minimum size=0.4cm, thick, fill=white, draw}, x=0.7cm, y=1cm]
\node (8) at (1,3) {$2$};
\node (9) at (0,4) {$1_l$};
\node (10) at (2,4) {$1_r$};
\node (11) at (1,5) {$1_e$};
\node at (1,1) [draw=none]{$\over{\bJ}_1(1)$};
\drawpath{9,10}{};
\drawpath{10,8,9}{dashed};
\drawpath{10,11,9}{};
\end{tikzpicture}
\tab$\Longrightarrow$\tab
\begin{tikzpicture}[baseline=(3), every node/.style={inner sep=0, minimum size=0.4cm, thick, fill=white, draw}, x=0.7cm, y=1cm]
\node (1) at (0,0) {\tiny$f_3^0$};
\node (2) at (2,0) {\tiny$f_3^1$};
\node (3) at (0,1) {\tiny$f_2^0$};
\node (4) at (2,1) {\tiny$f_2^1$};
\node (5) at (0,2) {\tiny$f_1^0$};
\node (6) at (2,2) {\tiny$f_1^1$};
\node (7) at (0,3) {\tiny$e_1^0$};
\drawpath{1,2,3,4,5,6,7,5}{}
\drawpath{7,5,3,1}{dashed}
\drawpath{6,4,2}{dashed}
\node at (1,-1) [draw=none]{$\bQ(\bi)$};
\end{tikzpicture}
\caption{The basic quiver for $\bi=(3,2,1)$.}\label{basicq}
\end{figure}
The resulting double quiver $\bD(\bi)=\bQ(\bi^{op})*\bQ(\bi)$ is shown in Figure \ref{SL4quiver}.
\end{Ex}
Let th verties of $\bQ(\bi)$ be indexed by $Q$, and let those of $\bQ(\bi^{op})$ be indexed by $Q^{op}$. Then there is a standard polarization of the double acting on $L^2(\R^{|\bi|})$ as follows.
\begin{Prop}The \emph{symplectic double polarization} for the quantum torus algebra $\cX_q^{\bD(\bi)}$ is given by
\Eq{
X_i:= \case{
e(\sum_j w_{ij}u_j-2p_i),& i\in Q,\\
e(\sum_j w_{|i|j}u_j+2p_i),& i\in Q^{op},\\
e(2\sum_j w_{ij}u_j),& i\in Q\cap Q^{op},
}
}
where $(w_{ij})_{i,j\in Q}$ is given by \eqref{wij} obtained from the exchange matrix $(b_{ij})_{i,j\in Q}$ of the cluster seed associated to the basic quiver $\bQ(\bi)$.
\end{Prop}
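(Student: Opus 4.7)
The plan is to verify directly that the three formulas assemble into a polarization of $\cX_q^{\bD(\bi)}$, i.e.\ that the assigned operators satisfy $X_iX_j = q^{-2w_{ij}^{\bD}}X_jX_i$ for the weights $w_{ij}^{\bD}$ of the double quiver. Since the formulas split into three disjoint cases according to whether each vertex lies in $Q\setminus Q^{op}$, $Q^{op}\setminus Q$, or in the overlap $Q\cap Q^{op}$, the argument reduces to a case-by-case computation of $[L_i,L_j]$ using the Heisenberg relation $[u_j,p_k]=\frac{\delta_{jk}}{2\pi\sqrt{-1}}$, and a comparison with $w_{ij}^{\bD}$ as dictated by the amalgamation.

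First I would compute $w_{ij}^{\bD}$ from Definition \ref{amal}. Since $\bQ(\bi^{op})$ is obtained from $\bQ(\bi)$ by a horizontal mirror followed by reversing every arrow, its exchange matrix at mirror vertices is the negation of that of $\bQ(\bi)$: $w^{op}_{ij}=-w_{|i|,|j|}$ for $i,j\in Q^{op}$. Amalgamating along the shared frozen vertices therefore yields $w_{ij}^{\bD}=w_{ij}$ for $i,j\in Q\setminus Q^{op}$, $w_{ij}^{\bD}=-w_{|i|,|j|}$ for $i,j\in Q^{op}\setminus Q$, and
\Eqn{
w_{ij}^{\bD}=w_{ij}+w^{op}_{ij}=w_{ij}-w_{ij}=0
}
for $i,j\in Q\cap Q^{op}$, with mixed pairs inherited from the appropriate side.

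Next, case (a) with $i,j\in Q\setminus Q^{op}$ is exactly the standard polarization of Proposition \ref{standpolar} applied to $\bQ(\bi)$. Case (b) with $i,j\in Q^{op}\setminus Q$ differs from case (a) only by the sign of the momentum term, $+2p_{|i|}$ in place of $-2p_{|i|}$. Since the Heisenberg commutator is linear in the momentum coefficient, this sign flip negates the resulting commutator and produces $X_iX_j=q^{+2w_{|i|,|j|}}X_jX_i=q^{-2w_{ij}^{\bD}}X_jX_i$, in agreement with the arrow-reversed weights. Case (c) with both $i,j\in Q\cap Q^{op}$ is immediate: the expressions involve only position operators and therefore commute, matching $w_{ij}^{\bD}=0$. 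The identity that organizes all the mixed cases is the factorization
\Eqn{
e\Bigl(2\sum_j w_{ij}u_j\Bigr)=e\Bigl(\sum_j w_{ij}u_j-2p_i\Bigr)\cdot e\Bigl(\sum_j w_{ij}u_j+2p_i\Bigr),
}
valid because the two factors commute (their bracket is a sum of $[u,u]$ and $[p,p]$ terms). This exhibits the shared-vertex operator as the literal product of the $\bQ(\bi)$-operator and the $\bQ(\bi^{op})$-operator at the same vertex, realizing the amalgamation rule $\til{X}_i=X_i\otimes X_{\phi(i)}$ of Definition \ref{amal} at the level of operators on a single Hilbert space. All cross-commutation relations then reduce to cases (a)--(c) applied to each factor.

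The main obstacle is purely bookkeeping: tracking the identification $i\leftrightarrow|i|$ between mirror vertices, propagating the arrow-reversal sign correctly through the weights $w^{op}_{ij}$, and checking that the dashed arrows between frozen nodes (carrying half-integer weights, cf.\ Figure \ref{thick}) are handled consistently with the factor of $\tfrac12$ in the definition of $b_{ij}$. No conceptual difficulty remains once the factorization above is isolated.
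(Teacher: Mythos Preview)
Your verification is correct and is essentially the natural direct check; the paper states this proposition without proof, treating it as a routine consequence of Proposition~\ref{standpolar} and the amalgamation description of $\bD(\bi)=\bQ(\bi^{op})*\bQ(\bi)$. Your key observation --- that the shared-vertex operator factors as the product of the $\bQ(\bi)$- and $\bQ(\bi^{op})$-side standard polarizations, and that each opposite-side factor commutes with all same-side variables because the two contributions $[u,p]$ cancel by skew-symmetry of $w_{ij}$ --- is exactly what makes the amalgamation rule $\til X_i=X_i\otimes X_{\phi(i)}$ hold at the operator level, and from there all cases follow.
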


\begin{Cor}\label{rankN} The quiver $\bD(\bi)$ has $2|\bi|+k$ vertices and of rank $2|\bi|$ where $k=|Q\cap Q^{op}|$. In particular, the central characters are generated by $k$ elements.
\end{Cor}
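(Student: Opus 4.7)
The plan is to prove this by first counting the vertices of $\bQ(\bi)$, then transferring to $\bD(\bi)$ by the amalgamation inclusion-exclusion, and finally identifying the kernel of the exchange matrix with the shared vertices.

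First I would count $V(\bQ(\bi))$. By construction, on each level $i\in I$ that survives (i.e., either $i$ occurs in $\bi$ or $i$ is adjacent in the Dynkin diagram to some letter of $\bi$), the amalgamation of the elementary quivers $\bJ(i_j)$ contributes exactly $n_i+1$ vertices $f_i^0,f_i^1,\ldots,f_i^{n_i}$, where $n_i$ is the number of occurrences of $i$ in $\bi$. Summing over surviving levels yields $\sum_i (n_i+1)=|\bi|+L$ vertices, where $L$ is the number of surviving levels. To these we add the $E$ extra vertices $\{e_i^0\}$ coming from the simple roots appearing in the chain $\{\b_j\}$ and from $\bH(\bi)$. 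Thus $V(\bQ(\bi))=|\bi|+L+E$.

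Next I would identify $Q\cap Q^{op}$. By the Notation preceding the example, the vertices shared by $\bQ(\bi)$ and $\bQ(\bi^{op})$ inside $\bD(\bi)$ are precisely the leftmost level vertices $\{f_i^0\}$ of $\bQ(\bi)$ (which are identified with the rightmost vertices of $\bQ(\bi^{op})$ under the mirror reflection) together with the extra vertices $\{e_i^0\}$. Hence $k=L+E$, giving $V(\bQ(\bi))=|\bi|+k$. A standard inclusion-exclusion for the amalgamation now produces
\Eqn{
V(\bD(\bi))=V(\bQ(\bi))+V(\bQ(\bi^{op}))-|Q\cap Q^{op}|=2(|\bi|+k)-k=2|\bi|+k.
}

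For the rank I would use the fact that $\bQ(\bi^{op})$ is obtained from $\bQ(\bi)$ by horizontal mirror reflection followed by reversal of all arrows. Consequently, any arrow incident to a shared vertex $v\in Q\cap Q^{op}$ coming from $\bQ(\bi)$ is canceled in the amalgamated exchange matrix $B^{\bD(\bi)}$ by the corresponding arrow coming from $\bQ(\bi^{op})$ (the reflection swaps left-frozen and right-frozen status, and the arrow reversal negates the contribution). Thus each of the $k$ shared vertices produces a vector in the kernel of $W^{\bD(\bi)}=DB^{\bD(\bi)}$, giving a kernel of dimension at least $k$. The main obstacle will be establishing the reverse inequality: that the restriction of the skew form to the remaining $2|\bi|$ non-shared vertices is non-degenerate. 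Here I would argue that this restriction can be arranged into a block off-diagonal form coming from the symplectic double structure, whose two off-diagonal blocks are identifications given by position/momentum duality between the left half $\bQ(\bi^{op})$ and the right half $\bQ(\bi)$. This makes the skew form a non-degenerate symplectic pairing of rank $2|\bi|$ on the non-shared vertices, so $\mathrm{rank}(W^{\bD(\bi)})=2|\bi|$ and the kernel has dimension exactly $k$.

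Finally, since the center of the quantum torus algebra $\cX_q^{\bD(\bi)}$ is spanned by monomials $X_\l$ with $\l$ in the kernel of the skew form (by \eqref{XiXj}), the central characters are generated by $k$ algebraically independent elements, one for each shared vertex. This is best verified on the example $\g=\sl_4$, $\bi=(3,2,1)$: here $|\bi|=3$, $L=3$, $E=1$, so $k=4$, $V(\bQ(\bi))=7=|\bi|+k$, $V(\bD(\bi))=10=2|\bi|+k$, and the rank is $6=2|\bi|$, consistent with Figure \ref{basicq}.
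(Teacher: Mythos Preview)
Your vertex count is fine, and the inclusion--exclusion for $V(\bD(\bi))$ is correct. The gap is in the rank argument.

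Your key claim, that each shared vertex $v\in Q\cap Q^{op}$ gives a kernel vector of the skew form because ``any arrow incident to $v$ coming from $\bQ(\bi)$ is canceled by the corresponding arrow from $\bQ(\bi^{op})$'', is false. The mirror-plus-reversal symmetry does cancel an arrow between two \emph{shared} vertices, but it does \emph{not} cancel an arrow from a shared vertex $v$ to a non-shared vertex $w$: the mirror of $w$ is a different non-shared vertex $w'\in Q^{op}\setminus Q$, so after reversal you obtain a new arrow $w'\to v$ rather than something canceling $v\to w$. Concretely, in the $\sl_4$ example with $\bi=(3,2,1)$ (Figure~\ref{SL4quiver}), the shared vertex $e_1^0$ (labeled $0$) has arrows $3\to 0\to 1$ with $1,3$ non-shared, so $X_0$ fails to commute with $X_1$ and $X_3$ and is not central. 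The actual center is generated by the monomials $K_iK_i'$ and $C=X_{0,2,5,8}$, none of which is a single coordinate vector at a shared vertex. So the lower bound $\dim\ker\ge k$ is not established by your argument, and the subsequent ``block off-diagonal'' sketch for the upper bound is left vague.

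The paper takes a different and more efficient route: it uses the explicit \emph{symplectic double polarization} of $\cX_q^{\bD(\bi)}$ on $L^2(\R^{|\bi|})$ given just before the Corollary. The existence of this polarization already forces $\mathrm{rank}\le 2|\bi|$. For the reverse inequality, the paper shows that every Weyl generator $e(u_k),e(2p_k)$ can be recovered as a (Laurent) monomial in the cluster variables: the momenta $e(p_k)$ come from ratios of the opposite pair $X_{f_i^j}$ and $X_{f_i^{-j}}$, and the positions $e(u_k)$ come by induction on the letters of $\bi$ from successive ratios at the unfrozen level. This forces $\mathrm{rank}\ge 2|\bi|$, hence equality, and the kernel dimension is $k$ by the vertex count. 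If you want to repair your argument, you would need to exhibit $k$ genuine null vectors (which will be nontrivial linear combinations, as in the $K_iK_i'$ and $C$ examples) and then prove non-degeneracy on a complement; the polarization method bypasses both steps.
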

\begin{proof} It suffices to show that the Weyl algebra generators $e(u_i), e(2p_i)$ can be obtained as monomials of the cluster variables. (Equivalently, elements $X_{L}$ for $L\in \L_\R$ in the real span of the defining lattice.) The operator $e(p_i)$ can be obtained easily by taking the ratios of the opposite pair of cluster variables $X_{f_i^j}$ and $X_{f_i^{-j}}$. Therefore we can focus on the position operators. In fact we can focus on the full subquiver of $\bD(\bi)$ by removing the vertices $\{e_i^0\}$. 

If $\bi$ starts with $i$, then the cluster variable $X_{f_i^0}$ consists of a single variable. Hence it follows by induction that one can obtain $e(u_i)$ by successively taking ratios of the cluster variables corresponding to the $i_k$ and $i_{k-1}$-th letter of $\bi$.
\end{proof}
Hence we can introduce the eigenvalues of the central characters $e(\l_i)\in\R, i\in |Q\cap Q^{op}|$ to the polarization of $X_{e_i^0}$. (If we introduce them in the unfrozen variable, then we can do an appropriate unitary transformation $p_i\mapsto \l$ and get rid of them.)
%==============================================================
\subsection{Quantum group embedding}\label{sec:pos:qemb}
Finally, we summarize the cluster realization of positive representations using the double quiver $\bD(\bi_0)$ for a longest reduced word $\bi_0$ defined in the previous subsection.

\begin{Thm}\cite{Ip7} There exists an embedding of the Drinfeld's double 
\Eq{\iota: \fD_q(\g)&\inj \cX_q^{\bD(\bi_0)}\nonumber\\
\{\be_i, \bf_i, \bK_i, \bK_i'\} &\mapsto \{e_i, f_i, K_i, K_i'\},\label{dqxq}
}
such that $K_iK_i'$ lies in the center of $\cX_q^{\bD(\bi_0)}$. In particular we have an embedding
\Eq{\iota: \cU_q(\g)&\inj \cX_q^{\bD(\bi_0)}/\<K_iK_i'=1\>. \label{uqxq}}
There exists a polarization $\pi_\l$ of $\cX_q^{\bD(\bi_0)}$ where $\pi_\l(K_iK_i') = 1$ and the other $n$ central characters acting by $e(\l_i) \in\R_{>0}$, such that the composition with the embedding \eqref{uqxq} coincides with the expression of the positive representations $\cP_\l$. 
\end{Thm}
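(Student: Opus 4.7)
The plan is to construct the embedding by reading off explicit cluster monomials from the representation formulas of Theorem~\ref{thmpos}, verify the defining relations of $\fD_q(\g)$ at the level of the quantum torus algebra $\cX_q^{\bD(\bi_0)}$, and recover the positive representation through the symplectic double polarization. For each $i\in I$ I would set $K_i:= X_{\lambda_i}$ with $\lambda_i\in\L_{\bD(\bi_0)}$ a specific vector supported on level $i$ of $\bQ(\bi_0^{op})$ together with $e_i^0$, and symmetrically define $K_i'$ on $\bQ(\bi_0)$. For each occurrence $k$ of $i$ in $\bi_0$, the summand $\pi(f^{k,\pm})$ in \eqref{FF2} is a single exponential in $u_j, p_j$ which can be realized as the polarized image of a cluster monomial $X_{\mu^{k,\pm}}\in\cX_q^{\bD(\bi_0)}$; then set $f_i:=\sum_k X_{\mu^{k,-}}+\sum_k X_{\mu^{k,+}}$ and analogously $e_i$ on the mirror side of $\bD(\bi_0)$. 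The specific vectors are forced by requiring that composition with the symplectic double polarization reproduces \eqref{KK}--\eqref{EE2}.

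Next I would verify the defining relations of $\fD_q(\g)$ directly in $\cX_q^{\bD(\bi_0)}$. The Cartan relations \eqref{KK1}--\eqref{KK3} reduce to computing the skew pairings $(\lambda_i,\mu_j^{k,\pm})$ in $\L_{\bD(\bi_0)}$, which can be read off from the arrow structure of the basic quiver described in Section~\ref{sec:pos:basicq}. The commutator \eqref{EFFE2} follows by direct expansion: for $i\neq j$ the cross commutators between summands of $e_i$ and $f_j$ cancel in pairs by the arrow structure of $\bD(\bi_0)$; for $i=j$, summands on opposite halves of $\bD(\bi_0)$ pair to $\pm 1$ and yield telescoping contributions along level $i$ that collapse to $K_i'-K_i$. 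The main obstacle is the Serre relations \eqref{SerreE}--\eqref{SerreF}, which I would handle as in \cite{Ip7} by decomposing each side into sums indexed by occurrences of $i,j$ in $\bi_0$ and reducing to the universal $q$-binomial identity satisfied by a family of $q$-commuting monomials whose pairing pattern is dictated by the quiver.

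Centrality of $K_iK_i'$ is immediate from its construction: its defining vector is the sum of contributions from $\bQ(\bi_0)$ and its mirror on opposite sides of the amalgamation axis of $\bD(\bi_0)$, and these contributions pair oppositely with every generator of $\cX_q^{\bD(\bi_0)}$, so their product lies in the center. For the polarization part, the symplectic double polarization of $\cX_q^{\bD(\bi_0)}$ (Proposition~\ref{standpolar} together with Corollary~\ref{rankN}) carries $n$ residual central characters after imposing $K_iK_i'=1$; assigning them the values $e(\lambda_i)$ with $\lambda\in\R^n$ reproduces the formulas \eqref{KK}--\eqref{EE2} term by term, identifying the image of $\iota$ with the positive representation $\cP_\lambda$. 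Injectivity of $\iota$ then follows from the fact that the positive representations are faithful on $\cU_q(\g)$, being irreducible with continuous parameter $\lambda$.
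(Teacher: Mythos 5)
The paper states this theorem as a cited result from \cite{Ip7} and does not supply its own proof, so there is no in-paper argument to compare against line by line; your sketch is broadly consistent with the strategy of that reference and with the supporting material the paper does provide (Corollary~\ref{embex} and the discussion of the group-like polarization). There is, however, a genuine gap in your injectivity step. You invoke ``the fact that the positive representations are faithful on $\cU_q(\g)$, being irreducible with continuous parameter $\lambda$.'' Irreducibility of each member together with a continuous parameter does not by itself give joint faithfulness of the family, and no independent faithfulness theorem is available to invoke here; in \cite{Ip7} and the surrounding cluster-realization literature the logical order runs the other way: injectivity of $\iota$ is established algebraically inside $\cX_q^{\bD(\bi_0)}$ (e.g.\ by exhibiting the images of the ordered PBW monomials of $\fD_q(\g)$ as Laurent polynomials with distinct leading cluster monomials, hence linearly independent), and faithfulness of the positive representations on $\cU_q(\g)$ is then deduced as a corollary of the embedding plus the polarization. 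As written, your final sentence is circular, or at least rests on an input you have not established.

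A secondary inaccuracy concerns the supports of $K_i$ and $K_i'$. You place $K_i$ on level $i$ of $\bQ(\bi_0^{op})$ together with $e_i^0$, and $K_i'$ symmetrically on $\bQ(\bi_0)$. Corollary~\ref{embex} instead gives $K_i = X_{f_i^{n_i},e_i^0,f_i^{-n_i}}$, drawing simultaneously from the rightmost node of level $i$ (in $\bQ(\bi_0)$), the extra node, and the leftmost node (in $\bQ(\bi_0^{op})$), while $K_i' = X_{f_i^{-n_i},\ldots,f_i^{n_i}}$ sweeps the entire level $i$ of the double. Neither monomial is supported on a single half of $\bD(\bi_0)$; getting these vectors right is not cosmetic, since your verification of the Cartan relations \eqref{KK1}--\eqref{KK3} and of the centrality of $K_iK_i'$ rests on computing the skew pairings of exactly these lattice vectors.
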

\begin{Def}\label{grouplikepolar} We call $\pi_\l$ the \emph{group-like} polarization of $\cX_q^{\bD(\bi_0)}$.
\end{Def}

One can construct the group-like polarization $\pi_\l$ explicitly by taking successive ratios of the terms $f^{k,\pm}$ in \eqref{FF}. It is then clear that the rank of this polarization is $2N$ (using the same argument as in the proof of Corollary \ref{rankN}), hence by Lemma \ref{polaru} it is unitarily equivalent to the standard polarization of the symplectic double induced by an $Sp(2N)$ action on the lattice.

\begin{Nota}\label{bold} To clarify the formula in the rest of the paper, we will always use \textbf{bold face} to denote elements of the quantum groups $\cU_q$ or $\fD_q$, and the corresponding \textit{Roman script} to denote elements in a quantum torus algebra $\cX_q$.
\end{Nota}

From the explicit construction \eqref{KK}--\eqref{EE} we can rephrase the embedding as follows.
\begin{Cor}\label{embex}\cite{Ip7} The embedding \eqref{dqxq} is such that (cf. Notation \ref{xik}):
\begin{itemize}
\item $f_i$ are represented by a telescoping sum, and $K_i'$ are monomials, explicitly given by
\Eq{
f_i &= \sum_{k=-n_i}^{n_i-1} X_{f_{i^{-n_i}},..., f_i^m}\label{fiX},\\
K_i'&=X_{f_i^{-n_i},..., f_i^{n_i}}\label{KiX}.
}
\item If $\bi_0$ is chosen such that $i_N=i$, then
\Eq{
e_i &= X_{f_i^{n_i}}+X_{f_i^{n_i}, e_i^0},\\
K_i &= X_{f_i^{n_i}, e_i^0, f_i^{-n_i}}.
}
\item $K_i$ are always monomial. 
\item When $\bi_0$ is appropriately chosen, the generators $e_i$ can also be represented by telescopic sums\footnote{With slight modification in type $C_n, F_4$ and $E_8$.}.
\end{itemize}
\end{Cor}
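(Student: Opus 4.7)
The plan is to match the explicit operator formulas of Theorem \ref{thmpos} against the definition of the standard polarization of $\cX_q^{\bD(\bi_0)}$, using the group-like polarization $\pi_\l$ of Definition \ref{grouplikepolar} as the translation dictionary. More precisely, I would first unpack $\pi_\l$ so that the quantum cluster variables $X_{f_i^j}$ at each level $i$ of $\bD(\bi_0)$ are written as $e(L)$-expressions of the Heisenberg generators $u_k, p_k$; since the level $i$ of $\bQ(\bi_0)$ contains exactly one vertex for each occurrence of $i$ in $\bi_0$, the indexing in \eqref{fiX} and \eqref{KiX} naturally aligns with the summation range $\{k : i_k = i\}$ in \eqref{FF}.

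To establish the telescoping form \eqref{fiX}, I would rewrite each summand $\pi(f^{k,\pm})$ from \eqref{FF2} as a product (in the sense of Notation \ref{xik}) of cluster variables corresponding to the $f_i^j$ at level $i$, together with the mirror vertices $f_i^{-j}$ coming from $\bQ(\bi_0^{op})$. The $\pm$ pair at the same $k$ should match a telescoping pair $X_{f_i^{-n_i},\ldots,f_i^{m-1}}$ and $X_{f_i^{-n_i},\ldots,f_i^m}$, with the $e(2\l_i)$ weight absorbed by the central polarization on $\{e_i^0\}$; summing over $k : i_k = i$ then collapses into the telescopic expression. For \eqref{KiX} and the monomial form of $K_i$, a direct exponent-matching with \eqref{KK} suffices, since both sides are pure exponentials of linear functions in $u_k$; the fact that $K_iK_i'$ lies in the center of $\cX_q^{\bD(\bi_0)}$ is then a bookkeeping check using the skew form on $\L_{\bD(\bi_0)}$.

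For the $e_i$ formulas when $i_N = i$, I would just read off \eqref{EE}: the two summands $e(\pm d_i u_N - 2p_N)$ correspond exactly to $X_{f_i^{n_i}}$ and $X_{f_i^{n_i}, e_i^0}$ under the standard polarization, since at the rightmost position of level $i$ the monomial $X_{f_i^{n_i}}$ is $e(-2p_N + \text{(position terms)})$ and tacking on the extra vertex $e_i^0$ inverts the sign of $u_N$. The monomial $K_i = X_{f_i^{n_i}, e_i^0, f_i^{-n_i}}$ then follows from the commutator relation $[\be_i, \bf_i]$, using that $K_i$ must commute with $K_i'$ and appear in the coproduct appropriately. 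For the general case $i_N \neq i$, I would invoke the cluster realization of the unitary equivalences $\cP_\l^{\bi_0} \simeq \cP_\l^{\bi_0'}$ promised in Theorem \ref{thmpos}: starting from a reduced word ending in $i$ and applying quantum cluster mutations corresponding to Coxeter moves transports the simple telescopic form of $e_i$ into the general formula, proving the last bullet point.

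The main obstacle will be the last bullet, showing that one can always choose $\bi_0$ so that each $e_i$ is simultaneously telescopic. Different indices $i$ want reduced words ending in different letters, so the choice must be delicate; I expect the correct strategy is to pick $\bi_0$ starting with a long cascade $(1, 2, \ldots, n, \ldots)$ adapted to the Dynkin diagram (as in the standard reduced word of Example \ref{nonsimple}) so that each $e_i$ sits near a ``tip'' of the quiver where the mutation sequence producing it from the $i_N$-case collapses to a telescoping sum. In the exceptional types $C_n, F_4, E_8$ this collapse is known to fail cleanly, which is exactly the source of the footnote in the statement, and I would handle those cases by recording the extra boundary term explicitly rather than attempting a uniform argument.
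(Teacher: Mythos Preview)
Your proposal is correct and follows the same approach as the paper: the paper does not give a standalone proof of this corollary but simply prefaces it with ``From the explicit construction \eqref{KK}--\eqref{EE} we can rephrase the embedding as follows'' and cites \cite{Ip7}, which is precisely the matching-of-explicit-formulas strategy you outline. Your treatment is considerably more detailed than what the paper records, but the underlying idea---reading off the telescopic $f_i$ and monomial $K_i'$ from \eqref{FF}--\eqref{FF2}, the two-term $e_i$ from \eqref{EE} when $i_N=i$, and transporting via cluster mutations for general $\bi_0$---is exactly what the one-line justification in the paper intends.
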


It is known that there is a sequence of cluster transformations that sends $e_i$ or $f_i$ into a single monomial of $\cX_q$ such that the vertices corresponding to the variables are sinks. In particular the embedding \eqref{dqxq} is into the \emph{universally Laurent polynomial} of the cluster algebra generated by the mutation class of $\cX_q$ \cite[Proposition 13.11]{GS}. A conjecture in \cite{Ip7} is that these Laurent polynomials are in fact always polynomials with no negative powers of the variables involved in any cluster, cf. Conjecture \ref{polycon}.

\begin{Rem}\label{KM} In \cite[Proposition 11.2]{GS}, they generalize the same formula of the embedding \eqref{fiX}--\eqref{KiX} to give a homomorphism (not necessarily injective) of the Borel part $\cU_q(\fb_-)$ to $\cX_q^{\bQ(\bi)}$ for \emph{arbitrary} reduced word $\bi$, even in the Kac-Moody case.
\end{Rem}

Finally, combining Theorem \ref{thmpos} and Corollary \ref{embex}, for any root index $i\in I$, we can choose $\bi_0$ with $i_N=i$ again in order to observe that\footnote{The notations used here differ slightly from \eqref{FF}--\eqref{EE2}.}
\begin{Cor}\label{efcom} We have the following decomposition in $\cX_q^{\bQ(\bi^{op})}\ox \cX_q^{\bQ(\bi)}$
\Eq{
e_i &= e_i^++K_i^+e_i^-,\\
f_i &= f_i^-+{K_i'}^-f_i^+,\\
K_i&=K_i^-K_i^+,\\
K_i'&={K_i'}^-{K_i'}^+,
}
where the $(+)$ generators belong to $1\ox \cX_q^{\bQ(\bi)}$ and $(-)$ generators belong to $\cX_q^{\bQ(\bi^{op})}\ox 1$, so that they mutually commute, 
\Eq{
\frac{[e_i^+, f_j^+]}{q_i-q_i\inv} = \d_{ij} {K_i'}^+, \tab \frac{[e_i^-, f_j^-]}{q_i-q_i\inv} = -\d_{ij} K_i^-,
}
and $\{e_i^\pm, f_i^\pm, K_i^\pm, {K_i'}^\pm\}$ satisfy all other quantum groups relations \eqref{KK1}--\eqref{SerreF} (except \eqref{EFFE}).
\end{Cor}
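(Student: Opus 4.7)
The plan is to exploit the amalgamation structure $\bD(\bi_0) = \bQ(\bi_0^{op}) * \bQ(\bi_0)$, which naturally reflects the Hopf-algebra coproduct $\D(\bE_i) = 1 \ox \bE_i + \bE_i \ox \bK_i$ and $\D(\bF_i) = \bF_i \ox 1 + \bK_i' \ox \bF_i$. The two families $(\pm)$ will be defined as images of independent Borel-type homomorphisms into each tensor factor.

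First I fix a reduced word $\bi_0$ with $i_N = i$ (which exists by Proposition \ref{w0head}) so that the monomial formulas of Corollary \ref{embex} for $e_i$ and $K_i$ apply. By Remark \ref{KM}, applying the same universal formulas \eqref{fiX}--\eqref{KiX} to the single quiver $\bQ(\bi_0)$ produces a homomorphism of $\cU_q(\fb_-)$ into $\cX_q^{\bQ(\bi_0)}$; I take its images as the definitions of $f_i^+, {K_i'}^+$ inside $1 \ox \cX_q^{\bQ(\bi_0)}$, and analogously define $f_i^-, {K_i'}^-$ inside $\cX_q^{\bQ(\bi_0^{op})} \ox 1$. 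The generators $e_i^\pm, K_i^\pm$ are obtained from the parallel formulas of Corollary \ref{embex} applied to each single quiver in isolation. Mutual commutativity of the two sides is automatic because they lie in disjoint tensor factors, and the Serre/Cartan relations \eqref{KK1}--\eqref{KK3}, \eqref{SerreE}, \eqref{SerreF} within each side follow immediately from the single-quiver Borel homomorphism.

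Next I verify the four decomposition identities by direct comparison with the expressions for $e_i, f_i, K_i, K_i'$ in $\cX_q^{\bD(\bi_0)}$ given by Corollary \ref{embex}. For $f_i$, the sum \eqref{fiX} partitions at the shared vertex $f_i^0$: the summands with $k \leq -1$ combine to yield $f_i^-$, while those with $k \geq 0$ factor as $X_{f_i^{-n_i}, \dots, f_i^{-1}} \cdot X_{f_i^0, \dots, f_i^k}$, which equals ${K_i'}^- f_i^+$. The Cartan factorizations $K_i' = {K_i'}^- {K_i'}^+$ and $K_i = K_i^- K_i^+$ follow from the monomial structure \eqref{KiX} once the shared boundary variables $X_{f_i^0}, X_{e_i^0}$ are split across the amalgamation. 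The decomposition of $e_i$ is analogous, with $e_i^0$ playing the role of $f_i^0$. The mixed single-sided relations then reduce to direct computations within a single quantum torus algebra: the telescoping structure of $f_j^\pm$ causes all interior terms to cancel after forming the commutator, leaving only boundary contributions that match the relevant Cartan monomial; the sign difference and appearance of ${K_i'}^+$ versus $K_i^-$ between the two sides reflects the orientation reversal between $\bi_0$ and $\bi_0^{op}$. Consistency with the full Drinfeld relation \eqref{EFFE2} can then be checked a posteriori by expanding $[e_i, f_j]$ via the decomposition and using cross-factor commutativity.

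I expect the main obstacle to be the careful bookkeeping of $q$-factors at the amalgamation boundary --- in particular, verifying that the products $X_{f_i^{-n_i}, \dots, f_i^{-1}} \cdot X_{f_i^0, \dots, f_i^k}$ factor cleanly without spurious $q$-corrections, and that the shared variables $X_{f_i^0}, X_{e_i^0}$ split as commuting tensor-product pairs across the amalgamation in a way consistent with the claimed decompositions. This ultimately depends on the specific adjacency structure of the basic quiver $\bQ(\bi_0)$ along its left boundary, and on tracking how amalgamation defreezes those vertices and modifies the local commutation relations.
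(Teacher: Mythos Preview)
Your approach is correct and essentially the same as what the paper intends. The paper treats this as an immediate observation from Theorem \ref{thmpos} and Corollary \ref{embex}: the explicit formulas \eqref{FF}--\eqref{EE2} already present $\pi_\l^{\bi_0}(\bf_i)$ and $\pi_\l^{\bi_0}(\be_i)$ as sums $\pi(f_i^-)+\pi(f_i^+)$ and $\pi(e_i^+)+\pi(e_i^-)$, and one reads off that the two halves land in the two tensor factors of $\cX_q^{\bQ(\bi_0^{op})}\ox\cX_q^{\bQ(\bi_0)}$, choosing for each $i$ a word with $i_N=i$ to make the $e_i$ side explicit. Your amalgamation-based bookkeeping is a faithful unpacking of exactly this.
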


The triple $\{e_i^\pm, f_i^\pm, K_i^\pm\}$ forms the commutation relation of the \emph{Heisenberg double} $\cH_q^\pm(\g)$ \cite{Ka1} of the Borel part of $\cU_q(\g)$. This is an important observation for us to construct a representation of $\fD_q(\g)$ on the double quiver $\bD(\bi)$ by generalizing the Heisenberg double realization (cf. Section \ref{sec:ppr:heid}).

\begin{Ex}\label{a5ex} Consider $\g=\sl_6$ of type $A_5$ and choose the reduced expression $$\bi_0=(1,2,1,3,2,1,4,3,2,1,5,4,3,2,1).$$
The representations of $\be_i$ and $\bf_i$ are given by a telescopic sum of polynomials, represented by \emph{paths} on the quiver\footnote{Formally we should draw the quiver such that the blue arrows are horizontal on their respective level. The quiver we present here first appeared in \cite{SS1} which is more aesthetically pleasing.} presented in Figure \ref{fig-An}. The $e_i$-paths always start with the node $f_i^{n_i}$ and the $f_i$-paths start with $f_i^{-n_i}$. In this notation we have for example
\Eqn{
f_1 &= X_{f_1^{-5}} + X_{f_1^{-5}, f_1^{-4}} + \cdots + X_{f_1^{-5},..., f_1^{4}},\\
K_1'&=X_{f_1^{-5},f_1^{-4},..., f_1^4,f_1^5}.
}
Note that the embedding of the $f_i$ generators \emph{never} include the last node of the $f_i$-paths, which only exists in $K_i'$.

On the other hand, since the word $\bi_0$ ends with $i_N=1$, we have a simple expression forthe embedding of $\be_1$ given by
\Eqn{
e_1=X_{f_1^5}+X_{f_1^5,e_1^0},\\
K_1=X_{f_1^5,e_1^0,f_1^{-5}},
}
again with the same rules as the $f_i$ generators.
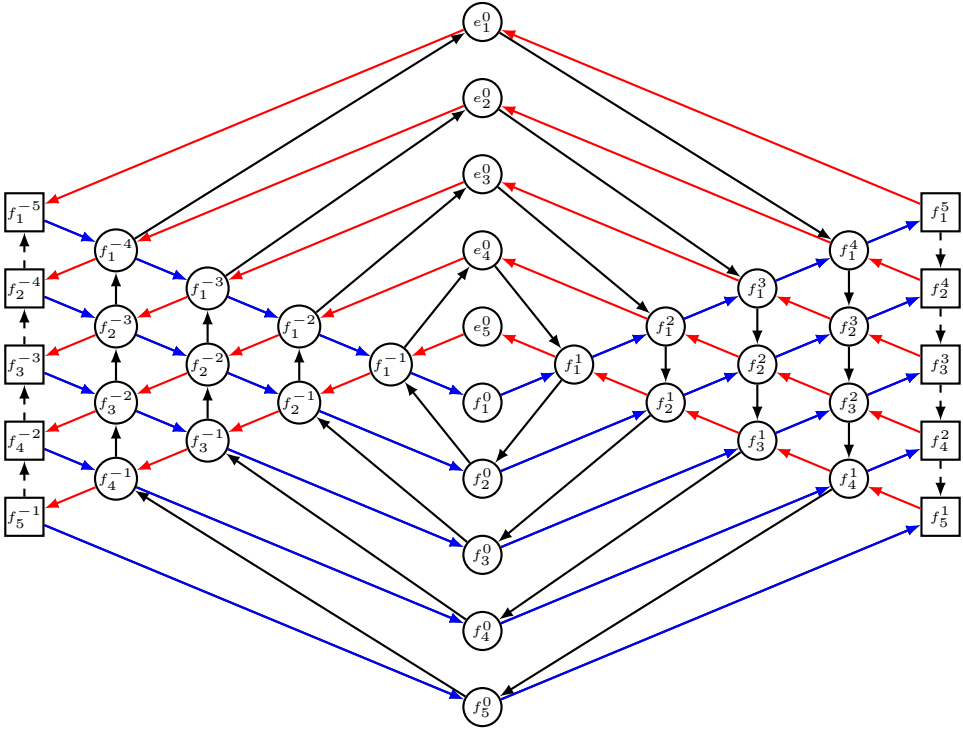
\begin{figure}[!htb]
\centering
\begin{tikzpicture}[every node/.style={inner sep=0, minimum size=0.5cm, thick}, x=1.2cm, y=0.5cm]
\foreach \y [count=\c from 1] in{33,28,21,12,1}
\foreach \x in{1,...,\c}{
	\pgfmathtruncatemacro{\ind}{\x+\y-1}
	\pgfmathtruncatemacro{\indop}{\y+2*\c-\x+1}
	\pgfmathtruncatemacro{\r}{6-\c}
	\pgfmathtruncatemacro{\ex}{\x-1-\c}
	\pgfmathtruncatemacro{\exop}{\c+1-\x}
	\ifthenelse{\x=1}{
		\node(\ind) at (\x-1,2*\c-1-\x)[draw] {\tiny$f_\r^{\ex}$};
		\node(\indop) at (11-\x,2*\c-1-\x)[draw] {\tiny$f_\r^{\exop}$};
	}{
		\node(\ind) at (\x-1,2*\c-1-\x)[draw, circle]{\tiny$f_\r^{\ex}$};
		\node(\indop) at (11-\x,2*\c-1-\x)[draw, circle]{\tiny$f_\r^{\exop}$};
	}
}
\foreach \x[count=\c from 1] in{6,16,24,30,34}{
\node(\x) at (5, 5-\c*2)[draw, circle]{\tiny$f_\c^0$};
}
\foreach \x[count=\c from 1] in{36,37,38,39,40}{
\node(\x) at (5, 15-\c*2)[draw, circle]{\tiny$e_\c^0$};
}

\foreach \x[count=\c from 1]in{33,28,21,12,1}{
\pgfmathtruncatemacro{\ee}{\x+2*\c}{
	\drawpath{\x,...,\ee}{thick}
}
}
\drawpath{11,36,1}{thick, red}
\drawpath{20,10,37,2,12}{thick, red}
\drawpath{27,19,9,38,3,13,21}{thick, red}
\drawpath{32,26,18,8,39,4,14,22,28}{thick, red}
\drawpath{35,31,25,17,7,40,5,15,23,29,33}{thick, red}
\drawpath{34,29,22,13,2,36,10,19,26,31,34}{thick}
\drawpath{3,37,9,18,25,30,23,14,3}{thick}
\drawpath{4,38,8,17,24,15,4}{thick}
\drawpath{5,39,7,16,5}{thick}
\drawpath{1,2,3,4,5,6,7,8,9,10,11}{thick, blue}
\drawpath{12,13,14,15,16,17,18,19,20}{thick,blue}
\drawpath{21,22,23,24,25,26,27}{thick,blue}
\drawpath{28,29,30,31,32}{thick,blue}
\drawpath{33,34,35}{thick,blue}
\drawpath{33,28,21,12,1}{dashed, thick}
\drawpath{11,20,27,32,35}{dashed, thick}
\end{tikzpicture}
\caption{$A_5$-quiver, with the $e_i$-paths colored in red and $f_i$-paths colored in blue.}
\label{fig-An}
\end{figure}
\end{Ex}
%==============================================================
\section{Minimal positive representations of $\cU_q(\sl(n+1,\R))$}\label{sec:min}
Following the strategy for the positive representations of $\cU_q(\g_\R)$, given a parabolic subgroup $P_J\subset G$ containing $B_-$, we construct a representation of $\cU_q(\g_\R)$ by quantizing the infinitesimal action of $\g$ on the functions of totally positive partial flag varieties $C^\oo(P_J^{>0}\setminus G_{>0})$ induced by the regular action 
\Eq{
g\cdot f(h) = \chi_\l([hg]_0) f([hg])
}
where $\chi_\l$ is some character on $P_J$ parametrized by the weights $\l\in\R^{|I\setminus J|}$ (cf. Lemma \ref{parachar}).

In this section, we illustrate the above procedure explicitly for the \emph{maximal parabolic subgroup} and construct a family of \emph{positive representations} of $\cU_q(\sl(n+1,\R))$, acting as positive essentially self-adjoint operators on the Hilbert space $L^2(\R^n)$. Since this Hilbert space has the minimal functional dimension possible, we call this family the \emph{minimal positive representations}. The simplicity of such representations is expected to find applications in mathematical physics and integrable systems.

%==============================================================
\subsection{Motivational example: $\cU_q(\sl(4,\R))$}\label{sec:min:ex}
Let us consider the standard maximal parabolic subgroup
\Eq{
P_J=\mat{{*}&*&*&0\\{*}&*&*&0\\{*}&*&*&0\\{*}&*&*&*}
}
corresponding to $J=\{1,2\}$. It admits a Langlands decomposition of the form $P_J^{>0}=U_{>0}^-T_{>0}M_{>0}$:
\Eq{
P_J^{>0}=\mat{1&0&0&0\\{*}&1&0&0\\{*}&*&1&0\\{*}&*&*&1}\mat{{*}&0&0&0\\0&*&0&0\\0&0&*&0\\0&0&0&*}\mat{1&*&*&0\\0&1&*&0\\0&0&1&0\\0&0&0&1}
}
The positive characters on $P_J^{>0}$ is parametrized by a single scalar $\l\in \R$ given on the abelian component $T_{>0}$:
\Eq{
h(t_1,t_2,t_3):=h_1(t_1)h_2(t_2)h_3(t_3)}
by
\Eq{
\chi_\l(h(t_1,t_2,t_3))=t_3^{2\l}
}
which only depends on the last coordinate.

Consider the action on $P_J^{>0}\setminus G_{>0}$ by right multiplication, here the right coset can be expressed as
$$\mat{{*}&*&*&0\\{*}&*&*&0\\{*}&*&*&0\\{*}&*&*&*}\mat{1&a&0&0\\0&1&b&0\\0&0&1&c\\0&0&0&1}$$
for $a,b,c\in\R_{>0}$, where in fact we can rewrite the representative in terms of the Lusztig coordinates as
\Eqn{
\mat{1&a&0&0\\0&1&b&0\\0&0&1&c\\0&0&0&1}&=\mat{1&0&0&0\\0&1&0&0\\0&0&1&c\\0&0&0&1}\mat{1&0&0&0\\0&1&b&0\\0&0&1&0\\0&0&0&1}\mat{1&a&0&0\\0&1&0&0\\0&0&1&0\\0&0&0&1}\\
&=x_3(c)x_2(b)x_1(a)
}

The group elements $e^{tX}$, where $X\in\g$, acts by multiplying on the right. More precisely, we only need to consider the action of 
\Eq{
x_i(t)=e^{tE_i},\tab y_i(t)=e^{tF_i}\tab h_i(t)=e^{tH_i}
} 
Hence using the Coxeter moves \eqref{EF}--\eqref{EE2}, we can rearrange the right multiplication in the form
\Eq{
x_3(c)x_2(b)x_1(a)e^{tX} = n\cdot h\cdot x_1(f')x_2(e')x_1(d')x_3(c')x_2(b')x_1(a'), \tab n\in U_-, h\in T.
}

To obtain a regular representation on $C^{\oo}(P_J^{>0}\setminus G_{>0})$, we project onto the coset $P_-\setminus G$ by ignoring the coordinates $d', e', f'$, and apply the character $\chi_\l$ to $h\in T_{>0}$.

For example, 
$$x_3(c)x_2(b)x_1(a)x_1(t)=x_3(c)x_2(b)x_1(a+t),$$
and hence the regular action is given by
$$e^{tE_1}: f(a,b,c)\mapsto f(a+t,b,c).$$
On the other hand
$$x_3(c)x_2(b)x_1(a)y_3(t)=y_3(\frac{t}{1+ct})h_3(1+ct)x_3(\frac{c}{1+ct})x_2(b)x_1(a)$$
and hence the regular action is given by
$$e^{tF_3}: f(a,b,c)\mapsto (1+ct)^{2\l}f(a,b,\frac{c}{1+ct}).$$
We compute the rest of the actions to be

\Eqn{
e^{tE_1}: f(a,b,c)&\mapsto f(a+t,b,c),\\
e^{tE_2}:  f(a,b,c)&\mapsto f(\frac{ab}{b+t},b+t,c),\\
e^{tE_3}:  f(a,b,c)&\mapsto f(a,\frac{bc}{c+t},c+t),\\
e^{tF_1}: f(a,b,c)&\mapsto f(\frac{a}{a t+1}, b(a t+1),c),\\
e^{tF_2}: f(a,b,c)&\mapsto f(a,\frac{b}{bt+1}, c(bt+1)),\\
e^{tF_3}: f(a,b,c)&\mapsto (ct+1)^{2\l} f(a,b, \frac{c}{ct+1}),\\
e^{tH_1}: f(a,b,c)&\mapsto f(ae^{-2t},be^{t},c),\\
e^{tH_2}: f(a,b,c)&\mapsto f(ae^t, be^{-2t}, ce^t),\\
e^{tH_3}: f(a,b,c)&\mapsto e^{2t\l}f(a, be^{t},ce^{-2t}).
}

The infinitesimal action 
\Eq{
X\cdot f := \left.\frac{d}{dt}(e^{tX}\cdot f)\right|_{t=0}
}
can now be computed to be
\Eqn{
E_1&=\del[,a],\\
E_2&=\del[,b]-\frac{a}{b}\del[,a],\\
E_3&=\del[,c]-\frac{b}{c}\del[,b],\\
F_1&=-a^2\del[,a]+ab\del[,b],\\
F_2&=-b^2\del[,b]+bc\del[,c],\\
F_3&=2c\l-c^2\del[,b],\\
H_1&=-2a\del[,a]+b\del[,b],\\
H_2&=a\del[,a]-2b\del[,b]+c\del[,c],\\
H_3&=2\l+b\del[,b]-2c\del[,c].
}

Now applying the \emph{formal Mellin transform} \cite{Ip2}
\Eq{
f(a,b,c)\mapsto \cF(u,v,w):=\int a^ub^vc^w f(a,b,c)dadbdc,} we can turn the differential operators into finite difference operators as follows
\Eqn{
E_1:\cF(u,v,w)&\mapsto (u+1)\cF(u+1,v,w),\\
E_2:\cF(u,v,w)&\mapsto (-u+v+1)\cF(u,v+1,w),\\
E_3:\cF(u,v,w)&\mapsto (-v+w+1)\cF(u,v,w+1),\\
F_1:\cF(u,v,w)&\mapsto (-u+v+1)\cF(u-1,v,w),\\
F_2:\cF(u,v,w)&\mapsto (-v+w+1)\cF(u,v-1,w),\\
F_3:\cF(u,v,w)&\mapsto (2\l-w+1)\cF(u,v,w-1),\\
H_1:\cF(u,v,w)&\mapsto (-2u+v)\cF(u,v,w),\\
H_2:\cF(u,v,w)&\mapsto (u-2v+w)\cF(u,v,w),\\
H_3:\cF(u,v,w)&\mapsto (2\l+v-2w)\cF(u,v,w).
}
\begin{Rem}
By the same argument in \cite{IpGL}, by introducing appropriate measure (a shift with complex parameters), one can identify 
\Eq{
L^2(\R_{>0}^3, d\mu(a,b,c))\simeq L^2(\R^3, dudvdw)
} with the standard Lebesgue measure under the Mellin transformation.
\end{Rem}

Finally, following \cite{Ip2} to perform the twisted quantization, and using Notation \ref{ELnote}, the quantized action of $\cU_q(\sl(4,\R))$ is given (on the rescaled generators) by
\Eqn{
\pi_\l^J(\be_1)&=[u]e(-2p_u),\\
\pi_\l^J(\be_2)&=[-u+v]e(-2p_v),\\
\pi_\l^J(\be_3)&=[-v+w]e(-2p_w),\\
\pi_\l^J(\bf_1)&=[-u+v]e(2p_u),\\
\pi_\l^J(\bf_2)&=[-v+w]e(2p_v),\\
\pi_\l^J(\bf_3)&=[2\l-w]e(2p_w),\\
\pi_\l^J(\bK_1)&=e(-2u+v),\\
\pi_\l^J(\bK_2)&=e(u-2v+w),\\
\pi_\l^J(\bK_3)&=e(v-2w+2\l).
}
One checks explicitly that this gives a representation of $\cU_q(\sl(4,\R))$ as positive essentially self-adjoint operators on $L^2(\R^3)$, in the sense of Remark \ref{domain}.

\begin{Rem}
Recall that the operator $e(2p_u)$ \emph{etc.} acts on the dense subspace $\cW$ by shifting the variable $u\mapsto u-\sqrt{-1}b$, and thus it is the analytic continuation (or \emph{Wick's rotation}) of the classical finite difference operator.
\end{Rem}

Furthermore, following \cite{Ip7}, the above representations can also be represented as the homomorphism from the Drinfeld's double $\fD_q(\g)$ together with a polarization of the quantum torus algebra associated to a quiver with 10 vertices, as shown in Figure \ref{SL4quiver}, which one can verify to be the same as the double quiver $\bD(\bi)$ for the reduced word $\bi=(3,2,1)$.

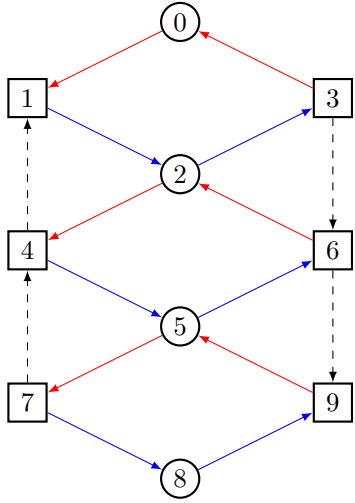
\begin{figure}
\centering
\begin{tikzpicture}[every node/.style={inner sep=0, minimum size=0.5cm, thick, fill=white, draw}, x=2cm, y=1cm]
\node (1) at (0,6) {$1$};
\node (0) at (1,7) [circle]{$0$};
\node (3) at (2,6) {$3$};
\node (4) at (0,4) {$4$};
\node (2) at (1,5) [circle]{$2$};
\node (6) at (2,4) {$6$};
\node (7) at (0,2) {$7$};
\node (5) at (1,3) [circle]{$5$};
\node (9) at (2,2) {$9$};
\node (8) at (1,1) [circle]{$8$};
\drawpath{1,2,3}{blue}
\drawpath{4,5,6}{blue}
\drawpath{7,8,9}{blue}
\drawpath{3,0,1}{red}
\drawpath{6,2,4}{red}
\drawpath{9,5,7}{red}
\drawpath{7,4,1}{dashed}
\drawpath{3,6,9}{dashed}
\end{tikzpicture}
\caption{The double quiver $\bD(\bi)$ for the word $\bi=(3,2,1)$. The labeling is just conveniently assigned. The $e_i$-paths are colored in red, $f_i$-paths in blue.}\label{SL4quiver}
\end{figure}

\Eqn{
e_1&=X_3+X_{3,0},&K_1&=X_{3,0,1},\\
e_2&=X_6+X_{6,2},&K_2&=X_{6,2,4},\\
e_3&=X_9+X_{9,5},&K_3&=X_{9,5,7},\\
f_1&=X_1+X_{1,2},&K_1'&=X_{1,2,3},\\
f_2&=X_4+X_{4,5},&K_2'&=X_{4,5,6},\\
f_3&=X_7+X_{7,8},&K_3'&=X_{7,8,9},
}
where the polarization is explicitly given by
\Eqn{
X_1&=e(-u+v+2p_u),& X_6&=e(-u+v-2p_v),\\
X_2&=e(2u-2v),&X_7&=e(2\l-w+2p_w),\\
X_3&=e(u-2p_u),&X_8&=e(-4\l+2w),\\
X_4&=e(-v+w+2p_v),&X_9&=e(-v+w-2p_w),\\
X_5&=e(2v-2w),&X_{0}&=e(-2u).
}
By Lemma \ref{polaru}, under the unitary transformation of multiplication by $e^{\pi i(u^2+v^2+w^2)}$, which induces the Weil action 
\Eq{
2p_u&\mapsto 2p_u+u, & 2p_v&\mapsto 2p_v+v, & 2p_w&\mapsto 2p_w+w} we recover (up to the parameter $\l$) the standard polarization of the symplectic double. 

In fact the rank of the underlying lattice is easily verified to be $6=10-4$ so it admits a natural polarization on $L^2(\R^3)$. Hence the center of the quantum torus algebra $\cX_q^{\bD(\bi)}$ is generated by 4 elements, namely
\Eqn{
K_1K_1'&=X_{0,1^2,2,3^2},\\
K_2K_2'&=X_{2,4^2,5,6^2},\\
K_3K_3'&=X_{5,7^2,8,9^2},
} and the central character 
\Eq{
C:=X_{0,2,5,8}=e(-4\l).
}

The simplicity of this quiver allows us to generalize the construction easily to the higher rank case in the next subsection.

By obvious symmetry, by choosing another standard parabolic subgroup
\Eq{
P=\mat{{*}&0&0&0\\{*}&*&*&*\\{*}&*&*&*\\{*}&*&*&*}
}
and following the same procedure, we obtain an equivalent representation, where all the arrows of the quiver are reversed.

However, we note that the minimality of the functional dimension of $\cH$ depends on the codimension of the parabolic subgroup chosen, not the fact that $P_J$ is maximal. As we see from the projection $P_J^{>0}\setminus G_{>0}$, the functional dimension is given by the codimension $N-N_J$, and in fact only for type $A_n$ we can achieve the minimal possible functional dimension of $L^2(\R^n)$ by choosing $J=\{1,2,...,n-1\}$ or $\{2,3,...,n\}$ for the maximal parabolic subgroup.

For example, if we choose the standard parabolic subgroup corresponding to the root index $\{1,3\}$:
\Eq{
P=\mat{{*}&*&0&0\\{*}&*&0&0\\{*}&*&*&*\\{*}&*&*&*}
}
we obtain a representation of $\cU_q(\sl(4,\R))$ on $L^2(\R^4)$, represented by the double quiver $\bD(\bi)$ associated to the word $\bi=(2,1,3,2)$ with rank $12-4=8$, see Figure \ref{fig-2132}, where the homomorphism $\fD_q(\g)\to \cX_q^{\bD(\bi)}$ is given by
\Eqn{
\be_1&\mapsto X_3+X_{3,7}+X_{3,7,10}+X_{3,7,10,5},&\bK_1&\mapsto X_{3,7,10,5,1},\\
\be_2&\mapsto X_8+X_{8,0},&\bK_2&\mapsto X_{8,0,4},\\
\be_3&\mapsto X_{11}+X_{11,7}+X_{11,7,2}+X_{11,7,2,5},&\bK_3&\mapsto X_{11,7,2,5,9},\\
\bf_1&\mapsto X_1+X_{1,2},&\bK_1'&\mapsto X_{1,2,3},\\
\bf_2&\mapsto X_4+X_{4,5}+X_{4,5,6}+X_{4,5,6,7},&\bK_2'&\mapsto X_{4,5,6,7,8},\\
\bf_3&\mapsto X_9+X_{9,10},&\bK_3'&\mapsto X_{9,10,11}.
}

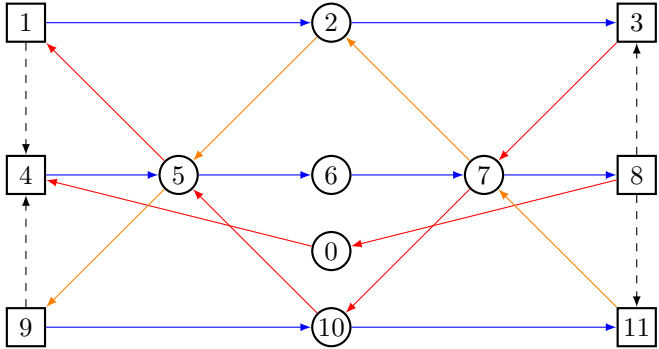
\begin{figure}[h]
\centering
\begin{tikzpicture}[every node/.style={inner sep=0, minimum size=0.5cm, thick, fill=white, draw}, x=2cm, y=1cm]
\node (1) at (0,4) {$1$};
\node (2) at (2,4) [circle]{$2$};
\node (3) at (4,4) {$3$};
\node (4) at (0,2) {$4$};
\node (5) at (1,2) [circle]{$5$};
\node (6) at (2,2) [circle]{$6$};
\node (7) at (3,2) [circle]{$7$};
\node (8) at (4,2) {$8$};
\node (9) at (0,0) {$9$};
\node (10) at (2,0) [circle]{$10$};
\node (11) at (4,0) {$11$};
\node (0) at (2,1) [circle]{$0$};
\drawpath{1,2,3}{blue}
\drawpath{4,5,6,7,8}{blue}
\drawpath{9,10,11}{blue}
\drawpath{3,7,10,5,1}{red}
\drawpath{8,0,4}{red}
\drawpath{11,7,2,5,9}{orange}
\drawpath{1,4}{dashed}
\drawpath{9,4}{dashed}
\drawpath{8,3}{dashed}
\drawpath{8,11}{dashed}
\end{tikzpicture}
\caption{The quiver $\bD(\bi)$ for $\bi=(2,1,3,2)$. The $e_i$-paths are colored in red and orange, $f_i$ paths colored in blue.}\label{fig-2132}
\end{figure}

We will prove in Section \ref{sec:ppr} that for any parabolic subgroup $P_J\subset G$, we can obtain a positive representation for $\cU_q(\g_\R)$, parametrized by $\l\in\R^{|I\setminus J|}$, where the quiver $\bD(\bi)$ is given by a truncated word $\bi$ from the longest word $\bi_0$.

%==============================================================
\subsection{Minimal positive representations for $\cU_q(\sl(n+1,\R))$}\label{sec:min:sln}
In fact, from the construction, it is clear that the regular action can be obtained from the maximal positive representation on $L^2(B_{>0}^-\setminus G_{>0})$ by ignoring the Lusztig coordinates \eqref{paraLuscoord} projected out by $P_J$ on the left. In particular all the calculations have been done previously in \cite{Ip2,Ip3}. 

In terms of the polarization, it means that if $u_i$ corresponds to those coordinates, then given the maximal positive representations of $\cU_q(\sl(n+1,\R))$, we obtain the parabolic positive representation by setting
\Eq{
u_i=0\tab \mbox{and}\tab e(\pm p_i)=0.}
Notice that this is in general not a well-defined quotient since the algebraic relation $e^{\pi bp_i}e^{-\pi bp_i}=1$ is violated. However, we will prove in Section \ref{sec:ppr} that in general this procedure allows us to construct arbitrary parabolic positive representations.

\begin{Thm} We have a family of irreducible representations of $\cU_q(\sl(n+1,\R))$ by positive essentially self-adjoint operators on $L^2(\R^n)$, parametrized by $\l\in\R$.
\end{Thm}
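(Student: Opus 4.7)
The plan is to generalize the $\cU_q(\sl(4,\R))$ motivational computation step by step, making use of the fact that all non-trivial rank-$2$ computations for type $A$ involve only the Coxeter relation \eqref{EE1} (with $a_{ij}=-1$), which has a clean closed form.

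First, I would fix the maximal parabolic $P_J\subset SL(n+1,\R)$ with $J=\{1,2,\ldots,n-1\}$, so that by Langlands decomposition \eqref{langlandsd} every coset in $P_J^{>0}\setminus G_{>0}$ has a unique representative of the form
\[
x_n(u_n)\,x_{n-1}(u_{n-1})\cdots x_1(u_1),\qquad u_i\in\R_{>0},
\]
and by Lemma \ref{parachar} the positive characters of $P_J^{>0}$ are parametrized by a single $\l\in\R$ applied to the $h_n(\cdot)$-component. Next, I would compute the infinitesimal right regular action of the generators $E_i,F_i,H_i$ on smooth functions $f(u_1,\ldots,u_n)$ by repeatedly applying \eqref{EF}--\eqref{EE1} and \eqref{EK}, \eqref{EKF}, in direct analogy with the $n=3$ case. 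The generators $E_i,H_i$ and $F_i$ for $i<n$ produce first-order differential operators purely in the $u_i$'s (with rational-polynomial coefficients in the $u_i$'s), while $F_n$ produces, via \eqref{EKF} and the character, a linear term involving $\l$.

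Third, I would apply the formal Mellin transform as in \cite{Ip2, IpGL} to turn the differential operators into finite-difference operators in the dual variables, and then perform the twisted quantization by analytic continuation along $\sqrt{-1}\,\tfrac{b+b^{-1}}{2}$-shifts. This yields explicit formulas of the form
\[
\pi_\l^J(\be_i)=[L_i^E]\,e(2p_i'),\qquad \pi_\l^J(\bf_i)=[L_i^F]\,e(\pm 2p_i''),\qquad \pi_\l^J(\bK_i)=e(M_i),
\]
with $L_i^E,L_i^F,M_i$ linear in the position variables and $\l$, generalizing the explicit expressions displayed at the end of Section \ref{sec:min:ex}. Positivity and essential self-adjointness on the dense domain $\cW\subset L^2(\R^n)$ of Remark \ref{domain} are automatic from the $[L]e(2p)$ shape.

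The main work is verifying the quantum group relations \eqref{KK1}--\eqref{SerreF} (with \eqref{EFFE} replaced by \eqref{EFFE2}). The Cartan relations \eqref{KK1}--\eqref{KK3} and the $[e_i,f_j]$ relation \eqref{EFFE2} follow by direct computation using the Heisenberg commutation relations for $u_k,p_k$; the main obstacle is the type-$A$ Serre relations \eqref{SerreE}--\eqref{SerreF}, which are handled by the same telescoping cancellation used in \cite{FI, Ip2}. Finally, for irreducibility, I would reinterpret the family of operators $\{\pi_\l^J(\be_i),\pi_\l^J(\bf_i),\pi_\l^J(\bK_i)\}$ as a polarization of a quantum torus algebra $\cX_q^{\bQ}$ associated to the simple quiver appearing in Figure \ref{SL4quiver} (for $n=3$) and its evident generalization. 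Since this lattice has rank exactly $2n$ (the skew form is non-degenerate on the non-central part), Lemma \ref{polaru} identifies the representation up to the canonical $Sp(2n)$ Weil action, and irreducibility follows because the generated Weyl algebra already acts irreducibly on $L^2(\R^n)$.
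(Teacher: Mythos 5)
Your overall strategy matches the paper's: fixing the maximal parabolic $P_J$ with $J=I\setminus\{n\}$, Lusztig coordinates $x_n(u_n)\cdots x_1(u_1)$ for the coset representatives, computing the infinitesimal right regular action using only the rank-two Coxeter move \eqref{EE1}, then the Mellin transform and twisted quantization, and deferring the verification of the defining relations to the rank-two computations already carried out in \cite{FI, Ip2}. Up to this point the proposal is essentially the paper's proof.

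The irreducibility argument, however, has a gap. Lemma \ref{polaru} says that any polarization of the quantum torus algebra $\cX_q^{\bQ}$ on $L^2(\R^n)$ is unique up to the $Sp(2n)$ Weil action, and indeed that polarization is irreducible as a representation of $\cX_q^{\bQ}$ itself. But what acts in the theorem is $\cU_q(\sl(n+1,\R))$, whose image under the homomorphism $\fD_q(\g)\to\cX_q^{\bQ}$ is a proper subalgebra (finite sums of specific monomials). Irreducibility of the $\cX_q^{\bQ}$-action does not by itself give irreducibility of the subalgebra's action — the $\cU_q(\g)$-invariant subspaces could a priori be strictly larger than the $\cX_q^{\bQ}$-invariant ones, and Lemma \ref{polaru} never addresses whether the image of $\cU_q(\g)$ generates enough of $\cX_q^{\bQ}$. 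The paper's argument closes exactly this hole: from $\pi_\l^J(\bK_i)=e\bigl(-\sum_j a_{ij}u_j+\cdots\bigr)$ and the invertibility of the Cartan matrix, one recovers each $e(u_i)$ as a fractional monomial in the $\pi_\l^J(\bK_i)$ (well-defined because these operators are positive self-adjoint); then from $\pi_\l^J(\be_i)=[u_i-u_{i-1}]\,e(-2p_i)$ one divides by the now-available bracket factor to recover $e(-2p_i)$. This shows the strong closure of the image of $\cU_q(\g_\R)$ contains the full Weyl algebra, which is what actually forces irreducibility. You should include this reconstruction step (or an equivalent argument that the image generates the Weyl algebra) before appealing to irreducibility of the polarization.
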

\begin{proof} We consider the action on $C^\oo(P_J^{>0}\setminus G_{>0})$ where $J=I\setminus\{n\}$. Following the previous strategy, we obtain the following parabolic positive representations $\cP_\l^J$ given as positive essentially self-adjoint operators on $L^2(\R^n)$, parametrized by $\l\in\R$ as
\Eqn{
\pi_\l^J(\be_1)&=[u_1]e(-2p_1),\\
\pi_\l^J(\be_i)&=[u_i-u_{i-1}]e(-2p_i),& i&\geq 2,\\
\pi_\l^J(\bf_i)&=[-u_i+u_{i+1}]e(2p_i),& i&<n,\\
\pi_\l^J(\bf_n)&=[2\l-u_n]e(2p_n),\\
\pi_\l^J(\bK_i)&=e(-\sum_{j=1}^n a_{ij}u_j),& i&<n,\\
\pi_\l^J(\bK_n)&=e(-\sum_{j=1}^n a_{nj}u_j+2\l)\\
&=e(-2u_n+u_{n-1}+2\l).
}

We can show irreducibility by explicitly reconstructing the Weyl operators $e(u_i)$ and $e(p_i)$ from the representations of $\cU_q(\g_\R)$ as certain rational functions of the generators. Firstly, the action of the $\bK_i$ generator is given by $e(\sum a_{ij}u_j)$ (together with the constant $e(\l)$), hence since the Cartan matrix is invertible, we obtain every $e(u_i)$ from certain (fractional) monomials of $\bK_i$, which is well-defined since $\bK_i$ is positive self-adjoint. Then from the expressions of $\be_i$ we obtain the operators $e(p_i)$ as a rational functions of the generators $\bK_i$ and $\be_i$.

For the cluster algebraic realization, we have a homomorphism
$$\fD_q(\sl_{n+1})\to\cX_q$$ of the Drinfeld's double to the quantum torus algebra $\cX_q:=\cX_q^{\bD(\bi)}$ associated to the double quiver $\bD(\bi)$ where $\bi=(n,n-1,...,3,2,1)$ such that explicitly
\Eqn{
\be_i&\mapsto X_{f_i^1}+X_{f_i^1,f_{i-1}^0},\\
\bf_i&\mapsto  X_{f_i^{-1}}+X_{f_i^{-1},f_i^0},\\
\bK_i&\mapsto  X_{f_i^1,f_{i-1}^0,f_i^{-1}},\\
\bK_i'&\mapsto X_{f_i^{-1},f_i^0,f_i^1}.
}
Here we redefined $f_0^0:=e_1^0$.

\begin{figure}[h]
\centering
\begin{tikzpicture}[every node/.style={inner sep=0, minimum size=0.5cm, thick, fill=white, draw}, x=2cm, y=1cm]
\node (0) at (1,7) [circle]{\tiny$e_1^0$};
\node (1) at (0,6) {\tiny$f_1^{-1}$};
\node (2) at (1,5) [circle]{\tiny$f_1^0$};
\node (3) at (2,6) {\tiny$f_1^1$};
\node (4) at (0,4) {\tiny$f_2^{-1}$};
\node (5) at (1,3) [circle]{\tiny$f_2^0$};
\node (6) at (2,4) {\tiny$f_2^1$};
\node (7) at (0,2) {\tiny$f_3^{-1}$};
\node (8) at (1,1) [circle]{\tiny$f_3^0$};
\node (9) at (2,2) {\tiny$f_3^1$};
\node (10) at (0,0)[draw=none, inner sep=10]{$\vdots$};
\node (11) at (2,0)[draw=none, inner sep=10]{$\vdots$};
\node (12) at (1,0)[draw=none, inner sep=10]{$\vdots$};
\node (13) at (1,-1)[circle]{\tiny$f_{n-1}^0$};
\node (14) at (0,-2){\tiny$f_n^{-1}$};
\node (15) at (1,-3)[circle]{\tiny$f_n^0$};
\node (16) at (2,-2){\tiny$f_n^1$};
\drawpath{1,2,3}{blue}
\drawpath{4,5,6}{blue}
\drawpath{7,8,9}{blue}
\drawpath{14,15,16}{blue}
\drawpath{3,0,1}{red}
\drawpath{6,2,4}{red}
\drawpath{9,5,7}{red}
\drawpath{16,13,14}{red}
\drawpath{14,10,7,4,1}{dashed}
\drawpath{3,6,9,11,16}{dashed}
\end{tikzpicture}
\caption{The quiver $\bD(\bi)$ for $\bi=(n,n-1...,3,2,1)$}
\end{figure}
\end{proof}
%==============================================================
\subsection{Non-simple generators}\label{sec:min:nonsimple}
In this subsection, we investigate the non-simple generators stated in Example \ref{nonsimple}.
\begin{Prop}
The non-simple generators can be given explicitly as follows:
\Eq{
\bf_{i,j} &\mapsto X_{f_i^{-1},...,f_{j-1}^{-1},f_i^0,..., f_{j-2}^0}+X_{f_i^{-1},...,f_{j-1}^{-1}, f_i^0,..., f_{j-1}^0},\\
\be_{i,j} &\mapsto X_{f_i^1,...,f_{j-1}^1}+X_{f_i^1,...,f_{j-1}^1, f_{i-1}^0},
}
for $1\leq i<j\leq n$, where again we redefine $f_0^0:=e_1^0$.
\end{Prop}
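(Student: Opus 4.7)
The plan is to induct on $j-i$, using the defining recursion $\be_{i,j}=T_i(\be_{i+1,j})$ and the analogue for $\bf_{i,j}$, together with the simply-laced formula for the Lusztig braid from Definition~\ref{lusztigbraid}. The base case (shortest chain) reduces to a direct computation of $T_i(\be_{i+1})$ (resp.\ $T_i(\bf_{i+1})$) using the explicit images of the simple generators from Section~\ref{sec:min:sln}.

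For the inductive step on $\be_{i,j}$, observe that $\be_{i+1,j}$ has weight $\a_{i+1}+\cdots+\a_j$, which pairs with $\a_i$ as $-1$ in type $A_n$, so Definition~\ref{lusztigbraid} gives
\[
\be_{i,j}\;=\;T_i(\be_{i+1,j})\;=\;\frac{[\be_{i+1,j},\be_i]_{q^{1/2}}}{q-q^{-1}}.
\]
By the inductive hypothesis, $\be_{i+1,j}$ maps to a binomial $X_{\lambda_1}+X_{\lambda_2}$ and $\be_i$ maps to $X_{\mu_1}+X_{\mu_2}$, so the $q$-commutator expands into four pairwise terms. Each pairwise $q$-commutator satisfies
\[
[X_\lambda,X_\mu]_{q^{1/2}}\;=\;\bigl(q^{1/2-(\lambda,\mu)}-q^{-1/2+(\lambda,\mu)}\bigr)X_{\lambda+\mu},
\]
which vanishes when $(\lambda,\mu)=1/2$ and equals $(q-q^{-1})X_{\lambda+\mu}$ when $(\lambda,\mu)=-1/2$. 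Reading the pairings off the arrows of the minimal quiver $\bD(\bi)$ for $\bi=(n,n-1,\ldots,1)$ (only the dashed arrow between $f_i^1$ and $f_{i+1}^1$ and the solid arrow from $f_i^0$ to $f_i^1$ contribute nontrivially), exactly two of the four terms drop out, and the remaining two reassemble into the advertised binomial expression for $\be_{i,j}$.

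A symmetric argument handles $\bf_{i,j}$, with the nodes $f_k^{-1},f_k^0$ playing the roles of $f_k^1,f_{k-1}^0$; the relevant contributing arrows are the dashed one between $f_i^{-1}$ and $f_{i+1}^{-1}$ and the red one from $f_i^0$ to $f_{i+1}^{-1}$. The main technical obstacle is bookkeeping the $q$-powers correctly across all four terms, and this is substantially eased by factoring each proposed image as a monomial times a binomial of the form $1+q\,X_{f_{i-1}^0}$ (resp.\ a binomial in $X_{f_{j-1}^0}$ for $\bf_{i,j}$). This factorization makes clear that the binomial tail passes unchanged through the $q$-commutator, while only the monomial prefactor is extended by $X_{\mu_1}$, reducing the induction to a single pairing computation per step.
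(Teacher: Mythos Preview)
Your proposal is correct and follows essentially the same approach as the paper: induction on the length $j-i$ using the recursion $\be_{i,j}=T_i(\be_{i+1,j})=\frac{[\be_{i+1,j},\be_i]_{q^{1/2}}}{q-q^{-1}}$ (and its $\bf$-analogue), followed by a direct evaluation of the $q$-commutator of binomials in the quantum torus. The paper is terser---it exhibits the base case $\bf_{12}$ and then asserts that ``the expressions follow easily by induction''---whereas you supply the mechanics (the pairing computation showing two of the four cross terms vanish, and the factorization $X_{\lambda_1}(1+qX_{f_\bullet^0})$ that streamlines the bookkeeping), but the underlying argument is identical.
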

\begin{proof} We have
\Eqn{
\bf_{1}&\mapsto X_{f_1^{-1}}+X_{f_1^{-1},f_1^0}\\
\bf_{2}&\mapsto X_{f_2^{-1}}+X_{f_2^{-1},f_2^0}\\
\bf_{12}&=T_1\bf_2:=\frac{[\bf_2,\bf_1]_{q^{1/2}}}{q-q\inv}=\frac{q^{1/2}\bf_2\bf_1-q^{-1/2}\bf_1\bf_2}{q-q\inv}&\mapsto X_{f_1^{-1},f_2^{-1},f_1^0}+X_{f_1^{-1},f_1^0,f_2^{-1},f_1^0,f_2^0}\\
}
Noticing that $$\bf_{ij} = T_i\bf_{i+1,j}=\frac{[\bf_{i+1,j},\bf_i]_{q^{1/2}}}{q-q\inv}$$
the expressions follow easily by induction.

The calculations for the expression of $\be_{ij}$ is completely analogous.
\end{proof}

From the explicit expression of the non-simple generators, we notice that the homomorphism $\fD_q(\g)\to \cX_q$ is in general not an embedding. One can solve for $X_i$ in multiple ways by taking ratios of the non-simple generators, hence there are linear dependencies on the PBW basis of $\cU_q(\g)$. In fact, from \cite[Conjecture 2.30]{GS} , the functional dimension of the representations should match only in the case when the parabolic subgroup $P_J$ is the minimal one, i.e. the Borel subgroup $P_J=B_-$.

However, since all the non-simple generators are non-trivial, we can conclude that
\begin{Thm}\label{minR}
The universal $\cR$ operator
\Eq{
\cR = \cK \prod_{\a\in \Phi_+}g_{b_\a}(\be_\a\ox\bf_\a)\label{uniR}
}
is well-defined as unitary operators on tensor products $\cP_\l^J\ox \cP_{\l'}^J$ of parabolic positive representations. 
\end{Thm}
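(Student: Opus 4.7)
The plan is to show that every factor in \eqref{uniR} is a well-defined unitary operator on $\cP_\l^J\ox\cP_{\l'}^J$, reducing to the standard framework for the $\cR$-operator in positive representations developed in \cite{FI, Ip4}. The key analytic input is that the non-compact quantum dilogarithm $g_b$ is unitary on the positive real axis, so by functional calculus $g_{b_\a}(P)$ is a unitary operator whenever $P$ is positive essentially self-adjoint. It therefore suffices to verify: (i) each operator $\pi_\l^J(\be_\a)\ox\pi_{\l'}^J(\bf_\a)$ is positive essentially self-adjoint on a dense subspace of $\cP_\l^J\ox\cP_{\l'}^J$; (ii) the ordered product of the resulting $|\Phi_+|$ unitaries makes sense; and (iii) the Cartan prefactor $\cK$ is unitary.

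First, I would establish positive self-adjointness of $\pi_\l^J(\be_\a)$ and $\pi_{\l'}^J(\bf_\a)$ individually, for every positive root $\a\in\Phi_+$ of $\sl_{n+1}$. By Example \ref{nonsimple} and the preceding proposition, each such generator is a sum of exactly two quantum cluster monomials, which under the polarization become exponentials of linear combinations of the Heisenberg operators. Using the commutation rules read off from the quiver $\bD(\bi)$ with $\bi=(n,n-1,\ldots,2,1)$, these two monomials satisfy a $q$-commutation relation of the correct sign, from which positive self-adjointness of the sum follows by the standard argument already used for the usual positive representations (see e.g.\ \cite{Ip1, FI}). Since $\pi_\l^J(\be_\a)$ acts only on the first tensor factor and $\pi_{\l'}^J(\bf_\a)$ only on the second, the two commute and their product is positive essentially self-adjoint on the dense tensor product domain of Remark \ref{domain}.

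Applying $g_{b_\a}$ via functional calculus then produces a unitary operator for each $\a\in\Phi_+$. The ordered product is taken in the order induced by the standard reduced word \eqref{standardAw0}, and because $|\Phi_+|=\tfrac{n(n+1)}{2}$ is finite in type $A_n$, this is simply a finite composition of unitaries and therefore unitary. The Cartan prefactor $\cK$ is the usual Gaussian-type operator built from $\log\bK_i\ox\log\bK_j$, which is unitary because each $\bK_i$ is represented by a positive self-adjoint exponential in the polarization.

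The principal obstacle, and the reason the theorem is highlighted in the minimal setting, is that the parabolic homomorphism $\fD_q(\g)\to\cX_q$ is no longer injective, so a priori some non-simple $\be_\a$ or $\bf_\a$ could degenerate to a single monomial, or even collapse to zero, under the polarization. Either pathology would break the positivity argument above. The content of the preceding proposition, namely that $\be_{ij}$ and $\bf_{ij}$ remain genuine two-term sums for every $1\le i<j\le n$, is precisely the combinatorial input required to exclude this pathology, and is what underwrites the invocation of the standard argument.
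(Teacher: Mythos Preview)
Your proposal is correct and follows essentially the same route as the paper: the paper's argument is in fact the single sentence preceding the theorem, ``since all the non-simple generators are non-trivial,'' which defers all analytic content to the existing machinery of \cite{Ip4}. You have simply unpacked what that machinery requires---positive self-adjointness of each $\be_\a\ox\bf_\a$, unitarity of $g_b$ via functional calculus, finiteness of the product, and unitarity of $\cK$---and correctly identified the proposition computing $\be_{i,j},\bf_{i,j}$ as the only new combinatorial input.

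One small inaccuracy worth flagging: you write that degenerating to a \emph{single monomial} would ``break the positivity argument.'' It would not---a single monomial $X_\mu$ is already of the form $e^{2\pi bL}$ under the polarization and hence positive self-adjoint, so $g_b$ applies without issue. The genuine concern is only that the Lusztig transforms $T_i$ used to define $\be_\a,\bf_\a$ might produce expressions that fail to be positive (for instance, a difference of monomials rather than a sum) when the homomorphism $\fD_q(\g)\to\cX_q$ is no longer injective. The explicit two-term formulas of the proposition rule this out, which is the point you correctly emphasize in your final paragraph; the phrasing about single monomials is just slightly misdirected.
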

Here $g_b$ is the quantum dilogarithm function and the product is over the natural order of $\Phi_+$ for the standard reduced expression \eqref{standardAw0} of the longest word $\bi_0$, while $\cK$ is the Cartan part of the $\cR$ operator.  See \cite{Ip4} for details.

%==============================================================
\subsection{Central characters}\label{sec:min:cc}
In this subsection, we study the central characters of the minimal positive representations in detail. 
\begin{Prop} The rank of $\cX_q$ is $2n$, with center generated by 
$$K_iK_i'=X_{f_i^{-1},f_i^0,f_i^{1},f_i^{1},f_{i-1}^0,f_i^{-1}},\tab i=1,...,n$$
and
$$C:=X_{f_0^0,f_1^0,...,f_n^0}=\prod_{k=0}^n X_{f_k^0}.$$
\end{Prop}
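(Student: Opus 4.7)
The plan is to deduce the rank from Corollary \ref{rankN}, cite the general centrality of $K_iK_i'$ from Section \ref{sec:pos:qemb}, and verify centrality of the new element $C$ together with linear independence directly from the quiver combinatorics.

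\medskip

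\noindent\emph{Step 1 (Rank via Corollary \ref{rankN}).} The reduced word $\bi=(n,n-1,\ldots,2,1)$ has length $|\bi|=n$, so the quiver $\bD(\bi)$ has $3n+1$ vertices: the rows $\{f_i^{-1},f_i^0,f_i^1\}$ for $i=1,\ldots,n$, together with the extra vertex $e_1^0=f_0^0$. Inspecting the construction, the shared vertices between $\bQ(\bi)$ and $\bQ(\bi^{op})$ are precisely the middle column $\{f_i^0\}_{i=0}^{n}$ (including the extra vertex), so $k=|Q\cap Q^{op}|=n+1$. Corollary \ref{rankN} then gives rank $2|\bi|=2n$, and tells us that the center is generated by $n+1$ elements, matching the count of the proposed generators.

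\medskip

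\noindent\emph{Step 2 (Centrality of $K_iK_i'$).} The embedding formulas for $\bK_i$ and $\bK_i'$ in Section \ref{sec:min:sln} have the same monomial shape as in the maximal case of Corollary \ref{embex}, so the general fact recalled in Section \ref{sec:pos:qemb}, that $K_iK_i'$ is central in $\cX_q^{\bD(\bi)}$, applies verbatim. Thus each
$K_iK_i'=X_{f_i^{-1},f_i^0,f_i^1,f_i^1,f_{i-1}^0,f_i^{-1}}$
is central.

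\medskip

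\noindent\emph{Step 3 (Centrality of $C$).} With the quiver convention of Section \ref{sec:min:sln}, the middle-column vertices $\{f_k^0\}$ have no arrows among themselves, so $(\vec e_{f_j^0},\vec e_{f_k^0})=0$ for all $j,k$, giving $(\vec e_{f_j^0},C)=0$. For an off-column vertex $f_j^{-1}$ with $j\geq 1$, the only arrows to middle-column vertices are the blue $f_j^{-1}\to f_j^0$ (weight $+1$) and the red $f_{j-1}^0\to f_j^{-1}$ (weight $+1$), yielding
\[(\vec e_{f_j^{-1}},C)=w_{f_j^{-1},f_j^0}+w_{f_j^{-1},f_{j-1}^0}=1+(-1)=0.\]
Similarly, for $f_j^1$ one has the blue $f_j^0\to f_j^1$ and the red $f_j^1\to f_{j-1}^0$, giving
\[(\vec e_{f_j^1},C)=(-1)+1=0.\]
Hence $C$ commutes with every cluster variable.

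\medskip

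\noindent\emph{Step 4 (Generation).} Each $K_iK_i'$ involves $f_i^{-1}$ and $f_i^1$ with positive multiplicity, and these vertices do not appear in $K_jK_j'$ for $j\neq i$ or in $C$; hence the $n$ elements $K_1K_1',\ldots,K_nK_n'$ are $\Z$-linearly independent, and $C$, whose support lies entirely in the middle column, is independent from them. We therefore have $n+1$ $\Z$-linearly independent central elements, which by Step 1 generate the center. The only potential obstacle, namely confirming these form a basis of the kernel lattice rather than merely spanning it rationally, is handled by the observation that the vertices $\{f_i^{\pm 1}\}$ and $\{f_k^0\}$ decouple the $K_iK_i'$ and $C$ into independent blocks, each of which is clearly primitive in the corresponding direction.
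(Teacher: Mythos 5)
Steps 1--3 are sound: the rank follows from Corollary \ref{rankN}, centrality of $K_iK_i'$ is standard, and your direct check that $(\vec e_{f_j^{\pm 1}},C)=0$ via the cancelling blue and red contributions at each off--column vertex is correct (the dashed column arrows play no role here since they only connect off--column vertices to each other).

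The gap is in Step 4, and it is exactly the one you flag: linear independence of $n+1$ central monomials does not by itself show they generate the rank-$(n+1)$ kernel lattice over $\Z$, and the ``decoupling/primitivity'' argument you offer does not close it. Each $K_iK_i'$ enters with coefficient $2$ at the vertices $f_i^{\pm1}$, so the generated sublattice is \emph{not} primitive in those directions. Concretely, parametrize the kernel by the free coordinates $(x_{f_0^0}, x_{f_1^{\pm1}},\ldots,x_{f_n^{\pm1}})$ (with $x_{f_i^{-1}}=x_{f_i^{1}}$ forced, and the $x_{f_i^0}$ determined up to congruences coming from the weight-$\tfrac12$ dashed arrows). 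For $n=3$, i.e.\ the $\cU_q(\sl(4,\R))$ example of Section \ref{sec:min:ex}, the monomial $X_{f_1^{-1},f_1^1,f_3^{-1},f_3^1}$ lies in the kernel of the skew form (one checks directly that every basis vector pairs to zero, with the $\pm\tfrac12$ contributions from the dashed arrows cancelling), hence is central; under the stated polarization it acts by $e(2\l)$, a square root of $C^{-1}$, and it is \emph{not} a $\Z$-monomial in $\{C,K_1K_1',K_2K_2',K_3K_3'\}$ since all four proposed generators have even exponent at $f_1^{\pm1}$. More generally, carrying out the index computation one finds that for $n$ odd the proposed set generates a sublattice of index two in the integral kernel, so it fails to generate the center; only for $n$ even do the congruence constraints on the middle column force every $x_{f_i^{\pm1}}$ to be even and the generation claim holds. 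This is an imprecision already latent in the statement of the Proposition; a correct proof must either exhibit a genuine $\Z$-basis (e.g.\ replacing $C$ by the element above when $n$ is odd) or verify unimodularity by an explicit Smith-normal-form computation, which your argument omits.
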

Therefore the parabolic positive representation of $\cU_q(\sl(n+1,\R))$, where we quotient out by the relation $\<K_iK_i'=1\>$, is parametrized by the central element $C$ acting as a scalar
\Eq{
\pi_\l(C)=e(-4\l).
}

Let us consider the \emph{simply-connected} form by adjoining fractional powers of the Cartan generators $\bK_i$ and define
\Eq{\label{adjUq}
\cU_q^{sc}(\g):=\cU_q(\g)[\bK_i^{\pm\frac{1}{h}}]_{i\in I}
 }
where $h$ is the Coxeter number. When $\g=\sl_{n+1}$, $h=n+1$. Positive representations of $\cU_q(\g)$ naturally extends to positive representations of $\cU_q^{sc}(\g)$. 

The center of $\cU_q^{sc}(\g)$ is generated by the Casimirs $\bC_k$ for $k=1,...,\mathrm{rank}(\g)=n$. Since the parabolic positive representation is irreducible, they act on $\cP_\l^J$ as scalars, and we can calculate directly their eigenvalues using the tools of \emph{virtual highest weight vectors} developed in \cite{Ip5}.
Recall that each Casimir $\bC_k$ can be written in the form
$$\bC_k=\bC_K+\bC_{E}$$
where $\bC_K\in \cU_q^{sc}(\fh)$ only depends on (fractional powers of) the Cartan generators $\bK_i$, while $\bC_{E}\in\cU_q^{sc}(\g)$ consists of expressions with $\bE_i$ acting on the right.

Explicitly, if $\cV_k$ is the $k$-th fundamental representation of $\cU_q(\g)$ and $(\vec[W_j])$, $j=1,...,\dim \cV_k=\mat{n+1\\k}$ are the weights of the weight spaces, then \cite{Ip5}
\Eq{
\bC_k = (-1)^n\sum_{j=1}^{\dim \cV_k}q^{\sum_{i=1}^n W_{ji}}  \prod_{i=1}^n \bK_i^{W_{ji}} + \bC_E
}

In particular, the first Casimir, for the $(n+1)$-dimensional standard representation $\cV_1$, has the form
$$\bC_1 = (-1)^n\sum_{j=1}^n q^{n+2-2j}\left( \prod_{k=1}^j\bK_k^{\frac{-2k}{n+1}}\prod_{k=j+1}^n \bK_k^{\frac{2n+2-2k}{n+1}}\right)+\bC_{E}$$
\begin{Ex}\cite{Ip5} When $g=\sl_3$, the positive Casimirs are given by\footnote{This is the adjoint of the ones presented in \cite{Ip5}.}
\Eqn{
\bC_1&:=\bK(q^2\bK_1\bK_2+\bK_1\inv \bK_2+q^{-2}\bK_1\inv \bK_2\inv -q\bK_2\bf_1\be_1-q\inv \bK_1\inv \bf_2\be_2+\bf_{12}\be_{21})\\
\bC_2&:=\bK\inv(q^2\bK_1\bK_2+\bK_1 \bK_2\inv+q^{-2}\bK_1\inv \bK_2\inv-q\inv \bK_2\inv\bf_1\be_1-q \bK_1\bf_2\be_2+\bf_{21}\be_{12} )
} 
where $\bK=\bK_1^{\frac13}\bK_2^{-\frac13}$ and $\be_{ij}=T_i(\be_j)$, $\bf_{ij}=T_i(\bf_j)$ are the non-simple generators (cf. Definition \ref{lusztigbraid}).
\end{Ex}

The idea is that under the (irreducible) parabolic positive representations, these operators acts by certain scalars. Hence their action on test functions on $L^2(\R^n)$ is just multiplication by the same scalars. But when we consider only the algebraic relation, then these operators make sense on generalized distributions $\cF$ also, in the sense that
\Eq{
\<X\cdot \cF, f\> := \<\cF, X\cdot f\>, \tab X\in \cU_q(\g_\R), f\in \cW,
}
where the test functions $f$ are rapidly decreasing and entire. Hence we can find the eigenvalues by acting $\bC_k$ on certain distributions that is annihilated by the $\bE_i$ generators, i.e. these distributions behave like highest weight vector, despite not living in the Hilbert space.

Here we use the complex delta functionals, formally viewed as evaluation at a complex value. We consider the functional
\Eq{
\cF(u_1,...,u_n):=\prod_{k=1}^n\d(u_k-\sqrt{-1}k\c_b)
}
where 
\Eq{
\c_b:=\frac{b-b\inv}{2}.
}
Then by direct calculation we obtain
\Eqn{
\be_i\cdot \cF &= 0,\\
K_i\cdot \cF &=\cF,& i&=1,...,n-1,\\
K_n\cdot \cF &=e(2\l-\sqrt{-1}(n+1)\c_b)\cF.
}
Hence one can interpret this parabolic positive representations as the ``$n$-th fundamental representations" due to the fact that the Cartan generators $H_i$ acts trivially for $i=1,...,n-1$.

Recall that under the maximal positive representations, the spectrum of the (positive) Casimirs $\bC_k$ are positive and lie inside a connected region on the positive quadrant bounded by the \emph{discriminant variety} in $\R^n$ \cite{Ip5}.

\begin{Thm}\label{minCas} The spectrum of the Casimirs $\bC_k$ acting on the parabolic positive representations are real-valued, and lie \emph{outside} the positive spectrum of the positive Casimirs of the maximal positive representations.
\end{Thm}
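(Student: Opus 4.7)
The plan is to exploit the irreducibility of $\cP_\l^J$ (proved in Section \ref{sec:min:sln}) to conclude each Casimir acts by a scalar, then compute this scalar explicitly using the virtual highest weight distribution $\cF$, and finally compare the resulting curve in $\R^n$ with the positive Casimir region described in \cite{Ip5}.

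For reality, each $\bC_k$ is constructed as a Hermitian element of $\cU_q(\g_\R)$ under its star structure, so it acts as a self-adjoint operator on $\cP_\l^J$. Irreducibility forces $\bC_k$ to act by a scalar, and self-adjointness then forces this scalar to be real. To compute it, write $\bC_k = \bC_K + \bC_E$ so that $\bC_E$ is a sum of monomials ending in some $\be_j$ on the right while $\bC_K$ lies in the Cartan subalgebra. Since $\be_i\cdot\cF = 0$ for all $i$, we have $\bC_k\cdot\cF = \bC_K\cdot\cF$. Plugging the known Cartan eigenvalues $\bK_i\cdot\cF = \cF$ for $i<n$ and $\bK_n\cdot\cF = e(2\l - \sqrt{-1}(n+1)\c_b)\cF$ into the explicit weight-space expansion of $\bC_K$ then produces a closed-form expression, which a direct simplification using $q = e^{\sqrt{-1}\pi b^2}$ verifies to be real, giving an independent confirmation of the reality established above.

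For the ``outside'' assertion, the key observation is $e(-\sqrt{-1}(n+1)\c_b) = q^{-(n+1)/2}(\sqrt{-1})^{n+1}$, so the parabolic Casimir values arise as a real slice of the complex analytic continuation of the positive Casimir functions at specific imaginary values of the Langlands parameters. In the maximal case, the positive Casimir values are manifestly positive sums of monomials in the $\bK_i$-eigenvalues, whereas the $(\sqrt{-1})^{n+1}$ factor combined with the $q^{-(n+1)/2}$ phase introduces sign changes into some of the monomials in the parabolic case, producing an expression of mixed signs. In type $\sl_3$ one can directly verify using the explicit formulas for $\bC_1,\bC_2$ displayed above that for every $\l\in\R$ the pair $(\bC_1(\l),\bC_2(\l))$ lies outside the positive Casimir region; indeed a short calculation gives $\bC_1(\l)=-2\cos(\pi b^2)e^{4\pi b\l/3}+e^{-8\pi b\l/3}$ and $\bC_2(\l)=\bC_1(-\l)$, which satisfy an algebraic identity incompatible with the defining relations of the positive spectrum in rank $2$.

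The main obstacle is a uniform proof in general rank: one needs either a semi-algebraic inequality cutting out the positive Casimir region from \cite{Ip5} which the parabolic curve systematically violates, or a real-analytic connectedness argument comparing the asymptotic behavior as $\l\to\pm\infty$ to the boundary of the positive region. I expect the former to be cleanest, mirroring the discriminant-variety characterization of the positive spectrum and reducing the disjointness statement to a direct comparison of two rational functions in $e^{\pi b\l}$ and $q$.
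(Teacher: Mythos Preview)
Your setup is sound and matches the paper's strategy: reduce to a scalar via irreducibility, compute that scalar by applying $\bC_k=\bC_K+\bC_E$ to the virtual highest weight $\cF$, and then compare with the discriminant description of the positive Casimir region from \cite{Ip5}. Your $\sl_3$ computation is correct and is exactly the $n=2$ instance of the paper's general formula.

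The gap is precisely where you acknowledge it: you do not carry out the ``outside'' argument in general rank, you only propose two possible routes. The paper executes your first proposed route (the discriminant comparison) quite concretely, and the missing piece is an explicit closed form for $\pi_\l^J(\bC_1)$ valid for all $n$. Since $\bK_i\cdot\cF=\cF$ for $i<n$, the weights of the standard representation $\cV_1$ collapse and one obtains
\[
\pi_\l^J(\bC_1)=(-1)^{n+1}[n]_q\,t+t^{-n},\qquad t:=e\!\left(\tfrac{4\l}{n+1}\right)>0,
\]
using $e(\sqrt{-1}\,n b^{-1})=(-1)^n$. This single formula does all the work: the characteristic polynomial $P(x)=x^{n+1}+\bC_1 x^n+\cdots+1$ of the positive Casimirs then factors as
\[
P(x)=\Bigl(\prod_{k=1}^{n-1}\bigl(x+(-1)^{n+1}q^{\,n-1-2k}t\bigr)\Bigr)\bigl(x+t^{-n}\bigr),
\]
which for $|q|=1$, $q\neq\pm1$ has exactly one real root. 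Since points of the positive Casimir spectrum correspond to $P(x)$ with all roots real and negative, this already forces the parabolic tuple off the positive region. The paper also gives a direct elementary check on the first coordinate: the positive spectrum satisfies $\pi_\l(\bC_1)\geq n+1$, whereas $(-1)^{n+1}[n]_q\,t+t^{-n}$ either takes negative values (even $n$ with $[n]_q>0$, or odd $n$ with $[n]_q<0$) or attains a minimum $(n+1)\bigl([n]_q/n\bigr)^{n/(n+1)}<n+1$ in the remaining parity, so in every case the bound is violated for some $\l$ and the curve stays on the wrong side of the discriminant variety.

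In short, your plan is the right one; what you are missing is the general-$n$ evaluation of $\bC_1$ on $\cF$ and the observation that this one coordinate, through the factorization of $P(x)$, already separates the parabolic curve from the positive region.
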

\begin{proof}
Since the generators $K_i$, $i=1,...,n-1$ acts trivially on the virtual highest weight, the first Casimir acts as the scalar
{\small\Eqn{
\pi_\l^J(\bC_1)&=(-1)^n\sum_{j=1}^n q^{n+2-2j}\pi(\bK_n)^{\frac{2}{n+1}}+(-1)^n q^{-n}\pi(\bK_n)^{\frac{-2n}{n+1}}\\
&=(-1)^n(q^n+\cdots+q^{-n+2})e\left(\frac{2}{n+1}(2\l-i(n+1)\c_b)\right)+(-1)^n q^{-n}e\left(\frac{-2n}{n+1}(2\l-i(n+1)\c_b)\right)\\
&=(-1)^{n+1}(q^{n-1}+q^{n-3}+\cdots+q^{1-n})t+t^{-n}\\
&=(-1)^{n+1}[n]_qt+t^{-n}
}
}
where 
\Eq{
t:=e(\frac{4\l}{n+1})>0
} and we used the fact that $e(inb\inv) = e^{\pi i n}= (-1)^n$.

Recall that the region for the spectrum of the positive Casimirs is bounded by the discriminant of the polynomial
$$P(x)=x^n+C_1(\l)x^{n-1}+\cdots + C_n(\l)x +1$$
where $P(x) = \prod (x+e^{L_i})$ and $L_i$ is the exponential terms of the first Casimir
$$C_1(\l) =\pi(\bC_1)=: \sum e(L_i)$$
In particular, in the parabolic positive representations, we have
$$P(x) = \prod_{k=1}^{n-1}(x+(-1)^{n+1}q^{n-1-2k}t)(x+t^{-n})$$
so that this polynomial always has $(n+1)$ distinct roots, exactly one of which is real, when $t>0$ and $q\neq 1$. Hence the discriminant is either strictly positive or negative when $t$ varies, and lie on one side of the positive spectrum.

Recall that the spectrum of the first positive Casimir has $\pi_\l(\bC_1)\geq n+1$.

We split into cases. Assume $[n]_q>0$. Then when $n$ is even and $t$ is sufficiently large, $\pi_\l^J(\bC_1)$ can take negative values, hence it lies outside of the positive spectrum.

When $n$ is odd, $\pi_\l^J(\bC_1)$ achieve a minimal value of 
$$\frac{(n+1)[n]_q^{\frac{n}{n+1}}}{n^{\frac{n}{n+1}}}<n+1$$ when $\dis t=\left(\frac{n}{[n]_q}\right)^{\frac{1}{n+1}}$ since $[n]_q<n$. Hence again $\pi(\bC_1)$ lies outside the positive spectrum.

The case for $[n]_q<0$ is completely analogue with the parity of $n$ switched.
\end{proof}
As a corollary, we see that the parabolic positive representation behaves slightly differently with respect to the parameter $\l$:
\begin{Cor} The representations $\cP_\l^J$ and $\cP_{-\l}^J$ are \emph{not} unitarily equivalent.
\end{Cor}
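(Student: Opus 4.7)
The plan is to distinguish the two representations by their central characters, more precisely by the eigenvalue of the first Casimir $\bC_1$. Since the minimal positive representations are irreducible by the theorem above, Schur's lemma forces $\bC_1$ to act by a scalar, and any unitary equivalence $\cP_\l^J\simeq \cP_{-\l}^J$ would require $\pi_\l^J(\bC_1)=\pi_{-\l}^J(\bC_1)$. So it suffices to show that this equality fails for every $\l\ne 0$.

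From the formula derived in the proof of Theorem \ref{minCas},
\[
\pi_\l^J(\bC_1) = (-1)^{n+1}[n]_q\, t + t^{-n},\qquad t:=e\!\left(\tfrac{4\l}{n+1}\right)>0,
\]
and the substitution $\l\mapsto -\l$ sends $t\mapsto t^{-1}$, so
\[
\pi_{-\l}^J(\bC_1) = (-1)^{n+1}[n]_q\, t^{-1} + t^{n}.
\]
Equating the two and rearranging gives $(-1)^{n+1}[n]_q(t-t^{-1}) = t^n-t^{-n}$. For $\l\ne 0$ we have $t\ne 1$, so we may divide by $t-t^{-1}$ to obtain
\[
(-1)^{n+1}[n]_q \;=\; \sum_{k=0}^{n-1} t^{\,n-1-2k}.
\]

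The conclusion then follows from a sharp size comparison of the two sides. The $n$ positive numbers $t^{n-1}, t^{n-3},\ldots, t^{1-n}$ have product $1$, so AM--GM gives $\sum_{k=0}^{n-1}t^{n-1-2k}\geq n$ with strict inequality whenever $t\ne 1$. On the other hand, writing $[n]_q = \sin(n\pi b^2)/\sin(\pi b^2) = U_{n-1}(\cos(\pi b^2))$ with $U_{n-1}$ the Chebyshev polynomial of the second kind, the standard bound $|U_{n-1}(y)|\le n$ for $y\in[-1,1]$ with equality only at $y=\pm 1$ yields $|[n]_q|<n$ strictly for $0<b<1$. These two strict inequalities are incompatible, so no such $\l\ne 0$ can exist. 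The only modest subtlety is tracking the sign of $(-1)^{n+1}[n]_q$ (which can be of either sign depending on $b$), but in every case the absolute value estimate $|(-1)^{n+1}[n]_q|<n<\sum_k t^{n-1-2k}$ suffices, so there is no genuine obstacle.
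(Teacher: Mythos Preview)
Your proof is correct and follows exactly the paper's approach of distinguishing $\cP_\l^J$ from $\cP_{-\l}^J$ via the eigenvalue of the first Casimir $\bC_1$; the paper merely asserts that the explicit formula $\pi_\l^J(\bC_1)=(-1)^{n+1}[n]_q\,t+t^{-n}$ is not invariant under $t\mapsto t^{-1}$, whereas you supply the actual verification via the AM--GM bound $\sum_{k}t^{n-1-2k}>n$ and the Chebyshev estimate $|[n]_q|<n$ for $0<b<1$. Your added rigor is welcome and the argument is clean.
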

\begin{proof} From the explicit expression in the proof above, we see that the spectrum of $\pi_\l^J(\bC_1)$ is not invariant under the exchange $\l\corr -\l$.
\end{proof}
%==============================================================
\subsection{Evaluation modules of $\cU_q(\what{\sl}_{n+1})$}\label{sec:min:ev}
Finally, we observe that one can ``wrap around" the minimal quiver in type $A$ to construct a family of positive representations for the affine quantum groups $\cU_q(\what{\sl}_{n+1})$! 

Given the quiver associated to the minimal positive representations of type $A_{n+1}$ above (with $I=\{0,...,n\}$), we form a new quiver $\what{\bD}$ by taking the root index $i\pmod{n+1}$,
\Eq{
f_i^{\e}\mapsto f_{\over{i}}^{\e},\tab \over{i}\in\Z/(n+1)\Z.
}
More precisely, we identify on the original quiver (indexed from $0$ to $n$) the vertex $e_0^0$ with $f_{n}^0$, and adding new dashed arrows between $f_0^{\pm1}$ with $f_{n}^{\pm 1}$, forming a closed loop with $Z_{n+1}$ symmetry. See Figure \ref{affineq}.

\begin{figure}[htb!]
\centering
\begin{tikzpicture}[baseline=(7), every node/.style={inner sep=0, minimum size=0.5cm, thick, fill=white, draw}, x=1.5cm, y=0.87cm]
\node (0) at (1,7) [circle]{\tiny$e_0^0$};
\node (1) at (0,6) {\tiny$f_0^{-1}$};
\node (2) at (1,5) [circle]{\tiny$f_0^0$};
\node (3) at (2,6) {\tiny$f_0^1$};
\node (4) at (0,4) {\tiny$f_1^{-1}$};
\node (5) at (1,3) [circle]{\tiny$f_1^0$};
\node (6) at (2,4) {\tiny$f_1^1$};
\node (7) at (0,2) {\tiny$f_2^{-1}$};
\node (8) at (1,1) [circle]{\tiny$f_2^0$};
\node (9) at (2,2) {\tiny$f_2^1$};
\node (10) at (0,0)[draw=none, inner sep=8]{$\vdots$};
\node (11) at (2,0)[draw=none, inner sep=8]{$\vdots$};
\node (12) at (1,0)[draw=none, inner sep=8]{$\vdots$};
\node (13) at (1,-1)[circle]{\tiny$f_{n-1}^0$};
\node (14) at (0,-2){\tiny$f_n^{-1}$};
\node (15) at (1,-3)[circle]{\tiny$f_n^0$};
\node (16) at (2,-2){\tiny$f_n^1$};
\drawpath{1,2,4,5,7,8,9,5,6,2,3,0,1}{}
\drawpath{14,15,16,13,14}{}
\drawpath{14,10,7,4,1}{dashed}
\drawpath{3,6,9,11,16}{dashed}
\path (1,-3.5) edge[dashed, out=240, in=270] (-1,2);
\path (-1,2) edge[dashed, out=90, in=120] (1,7.5);
\end{tikzpicture}
$\Longrightarrow$
\begin{tikzpicture}[baseline=(7), every node/.style={inner sep=0, minimum size=0.5cm, thick, fill=white, draw}, x=1.5cm, y=0.87cm]
\node (1) at (0,6) {\tiny$f_0^{-1}$};
\node (2) at (1,5) [circle]{\tiny$f_0^0$};
\node (3) at (2,6) {\tiny$f_0^1$};
\node (4) at (0,4) {\tiny$f_1^{-1}$};
\node (5) at (1,3) [circle]{\tiny$f_1^0$};
\node (6) at (2,4) {\tiny$f_1^1$};
\node (7) at (0,2) {\tiny$f_2^{-1}$};
\node (8) at (1,1) [circle]{\tiny$f_2^0$};
\node (9) at (2,2) {\tiny$f_2^1$};
\node (10) at (0,0)[draw=none, inner sep=8]{$\vdots$};
\node (11) at (2,0)[draw=none, inner sep=8]{$\vdots$};
\node (12) at (1,0)[draw=none, inner sep=8]{$\vdots$};
\node (13) at (1,-1)[circle]{\tiny$f_{n-1}^0$};
\node (14) at (0,-2){\tiny$f_n^{-1}$};
\node (15) at (1,-3)[circle]{\tiny$f_n^0$};
\node (16) at (2,-2){\tiny$f_n^1$};
\drawpath{3,15}{}
\drawpath{15,1}{}
\drawpath{1,2,4,5,7,8,9,5,6,2,3}{line width=4pt, white}
\drawpath{14,15,16,13,14}{line width=4pt, white}
\drawpath{1,2,4,5,7,8,9,5,6,2,3}{}
\drawpath{14,15,16,13,14}{}
\drawpath{14,10,7,4,1}{dashed}
\drawpath{3,6,9,11,16}{dashed}
\drawpath{1,14}{dashed, bend right}
\drawpath{16,3}{dashed, bend right}
\end{tikzpicture}
\caption{Construction of the quiver $\what{\bD}$.}\label{affineq}
\end{figure}

The new quiver $\what{\bD}$ consists of $3(n+1)$ vertices. Then it is straightforward to see that that
\begin{Prop}\label{affinerep} The assignments
\Eqn{
\be_i&\mapsto X_{f_i^1}+X_{f_i^1,f_{i-1}^0},\\
\bf_i&\mapsto X_{f_i^{-1}}+X_{f_i^{-1},f_i^0},\\
\bK_i&\mapsto X_{f_i^1,f_{i-1}^0,f_i^{-1}},\\
\bK_i'&\mapsto X_{f_i^{-1},f_i^0,f_i^1},
}
with $i\in \Z/(n+1)\Z$ gives a homomorphism of $\fD_q(\what{\sl}_{n+1})$, the Drinfeld's double of (the Borel part of the) affine quantum group, onto the quantum torus algebra $\cX_q^{\what{\bD}}$ associated to the quiver $\what{\bD}$. 

In particular, a polarization of $\cX_q^{\what{\bD}}$ gives a representation of $\cU_q(\what{\sl}(n+1,\R))$ as positive self-adjoint operators on $L^2(\R^n)$.
\end{Prop}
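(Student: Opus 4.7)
The plan is to verify directly the defining relations of $\fD_q(\what{\sl}_{n+1})$, namely \eqref{KK1}--\eqref{SerreF} (with \eqref{EFFE} replaced by \eqref{EFFE2}), by exploiting the cyclic $\Z/(n+1)\Z$-symmetry of the quiver $\what{\bD}$ and reducing each relation to a local calculation already carried out in the finite type $A$ case.

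First I would observe that all relations of $\fD_q(\what{\sl}_{n+1})$ are \emph{local} in the Dynkin index: each equation in \eqref{KK1}--\eqref{KK3} and \eqref{EFFE2} involves at most two indices, and the Serre relations \eqref{SerreE}--\eqref{SerreF} involve only pairs $(i,j)$ with $a_{ij}\neq 2$. For affine $A_n^{(1)}$ (with $n\geq 2$) the Cartan entry $a_{ij}$ is nonzero only when $i\equiv j\pm 1 \pmod{n+1}$, and in that case $a_{ij}=-1$. Hence it suffices to verify: (a) for $|i-j|\geq 2 \pmod{n+1}$, all generators labeled by $i$ commute with all generators labeled by $j$ except via the weight action of $\bK$, and (b) for each adjacent pair $(i,i+1)$ modulo $n+1$, the images satisfy the full set of rank-two relations of $\fD_q(\sl_3)$.

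Step (a) reduces to inspecting the quiver: the vertices $f_i^{\pm 1}, f_i^0$ and $f_j^{\pm 1}, f_j^0$ are mutually disconnected whenever $i,j$ are non-adjacent mod $n+1$, so the corresponding cluster monomials commute. From the formulas for $\bK_j$ one still gets the correct trivial weight action $q_i^{a_{ij}}=1$, and the vanishing commutator $[\be_i,\bf_j]=0$ for $i\neq j$ follows from the same disjointness. For step (b), one notices that for each adjacent pair $(i,i+1)\in\Z/(n+1)\Z$ the full subquiver of $\what{\bD}$ on the vertices $\{e_{i-1}^0:=f_{i-1}^0,\; f_i^{-1},f_i^0,f_i^1,\; f_{i+1}^{-1},f_{i+1}^0,f_{i+1}^1\}$ is \emph{identical} to the double quiver $\bD(\bi)$ for $\bi=(i+1,i)$ appearing inside the minimal positive representation of $\cU_q(\sl_3)$. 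Since the assignments of $\be_i,\bf_i,\bK_i,\bK_i'$ and $\be_{i+1},\bf_{i+1},\bK_{i+1},\bK_{i+1}'$ depend only on these vertices, and they agree with the embedding already established for the minimal positive representation of $\cU_q(\sl_3)$, all weight relations \eqref{KK1}--\eqref{KK2}, the $E$-$F$ relation \eqref{EFFE2}, and the length-two Serre relations \eqref{SerreE}--\eqref{SerreF} hold by specialization of the corresponding identities proved in Section \ref{sec:min:sln}.

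The main subtlety, and the step I expect to require most care, is the \emph{wrap-around} case, in particular the Serre relation between indices $(0,1)$ and the newly adjacent pair $(0,n)$. One must verify that the additional dashed arrows inserted in the construction of $\what{\bD}$ (identifying $e_0^0$ with $f_n^0$ and linking $f_0^{\pm 1}$ with $f_n^{\mp 1}$) produce precisely the weight-compatible commutation relations needed: i.e.\ $w_{f_0^{\pm 1}, f_n^{\mp 1}}$ must match the exchange matrix entries that make the triple $\{e_n^0\equiv f_{-1}^0, f_0^{-1}, f_0^0, f_0^1, f_n^{-1}, f_n^0, f_n^1\}$ isomorphic as a sub-cluster datum to the pair $(i,i+1)=(n,0)$ in $\bD(\bi_0)$ for $\sl_3$. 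Once this matching is checked by direct computation from Definition \ref{double} and the cyclic identification $f_i^\e\mapsto f_{\overline{i}}^\e$, the $\Z/(n+1)\Z$-symmetry promotes the finite-type verification to all of $\fD_q(\what{\sl}_{n+1})$. Finally, the positivity and self-adjointness statement is immediate: any polarization as in Definition \ref{posrepX} sends each $X_v$ to a positive essentially self-adjoint operator, and the formulas for $\be_i,\bf_i,\bK_i,\bK_i'$ are sums and products of such operators with mutually commuting position-momentum structure, so the action on $L^2(\R^n)$ is by positive self-adjoint operators on the dense domain $\cW$ of Remark \ref{domain}.
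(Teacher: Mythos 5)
Your reduction of the verification of the $\fD_q(\what{\sl}_{n+1})$ relations to rank-two local calculations is sound for $n\geq 2$, and the key observation — that for each cyclically adjacent pair $(i,i+1)$ the full subquiver of $\what{\bD}$ on the seven vertices $\{f_{i-1}^0, f_i^{\pm 1}, f_i^0, f_{i+1}^{\pm 1}, f_{i+1}^0\}$ is isomorphic (as a decorated seed) to $\bD(\bi)$ for $\bi=(2,1)$ of type $A_2$ — does indeed hold, including for the wrap-around pair $(n,0)$, provided one checks the orientation of the new dashed arrows $f_0^{-1}\to f_n^{-1}$ and $f_n^1\to f_0^1$ in the construction of $\what{\bD}$ agrees with the $\Z/(n+1)\Z$ cyclic pattern; you correctly flag this as the step needing care, and it does check out. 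Your point that the quantum torus commutation relations only depend on the entries of the skew form between the seven relevant vertices (so that arrows from these vertices to the rest of $\what{\bD}$ are irrelevant) is also right. Since the paper itself labels this result as straightforward and offers no proof, your argument actually supplies more detail than the paper does.

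The one genuine gap is the case $n=1$, which you explicitly exclude but which is part of the proposition as stated. For $\what{\sl}_2$ the affine Cartan matrix has $a_{01}=a_{10}=-2$, so the Serre relation is cubic (with $[3]_q$ coefficients) rather than quadratic, and the quiver $\what{\bD}$ is degenerate (the paper notes it has no dashed arrows between frozen nodes, see Figure \ref{sl2hat}), so the seven-vertex subquiver is not an $A_2$ minimal quiver and your reduction fails. The paper treats this case by direct computation immediately after the proposition; to close the gap you would need to do the same — either verify the $a_{01}=-2$ Serre relation by hand in the six-vertex torus algebra of Figure \ref{sl2hat}, or restrict the claim to $n\geq 2$ and refer to the paper's separate treatment of $n=1$. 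Minor further nitpicks: (i) in step (a), the cluster monomials for $\bK_i$ and $\be_j$ touch level $i-1$ and $j-1$ respectively, so the disjointness you invoke should be stated for the union of levels $\{i,i-1\}$ and $\{j,j-1\}$, not merely levels $i$ and $j$ — as you quietly rely on this you might spell it out; (ii) for the positivity statement one should also note (or cite the paper's computation) that $\what{\bD}$ has rank $2n$, so that Lemma \ref{polaru} gives a polarization on $L^2(\R^n)$ specifically, rather than taking this for granted.
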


In the special case of $n=1$, we have a degenerate quiver with no dashed arrows between the frozen nodes, see Figure \ref{sl2hat}. The homomorphism defined by
\Eqn{
\be_0&\mapsto X_3+X_{3,5},&\bK_0&\mapsto X_{3,5,1},\\
\be_1&\mapsto X_6+X_{6,2},&\bK_1&\mapsto X_{6,2,4},\\
\bf_0&\mapsto X_1+X_{1,2},&\bK_0'&\mapsto X_{1,2,3},\\
\bf_1&\mapsto X_4+X_{4,5},&\bK_1'&\mapsto X_{4,5,6}
}
satisfies the defining relation for $\cU_q(\what{\sl}_2)$. In particular the Serre relation $(a_{01}=-2)$ holds:
\Eq{
X_i^3X_j-[3]_q X_i^2X_jX_i+[3]_q X_iX_jX_i^2-X_jX_i^3=0,\tab i\neq j
}
where $X=\be, \bf$. Therefore it gives an irreducible representation of $\cU_q(\what{\sl}_2)$ realized as positive operators on $L^2(\R)$!

\begin{figure}[htb!]
\centering
\begin{tikzpicture}[every node/.style={inner sep=0, minimum size=0.5cm, thick, fill=white, draw}, x=1cm, y=1cm]
\node (1) at (0,2) {$1$};
\node (2) at (2,2) [circle]{$2$};
\node (3) at (4,2) {$3$};
\node (4) at (0,0) {$4$};
\node (5) at (2,0) [circle]{$5$};
\node (6) at (4,0) {$6$};
\drawpath{1,2,3,5,1}{}
\drawpath{4,5,6,2,4}{}
\end{tikzpicture}
\caption{The quiver for $\cU_q(\what{\sl}(2,\R))$.}\label{sl2hat}
\end{figure}

We can check directly that in general the quiver $\what{\bD}$ has rank $2n$, with $3(n+1)-2n = n+3$ central characters. The center is generated by $n+1$ elements
\Eq{
K_iK_i',\tab i=0,...,n,
} as well as $3$ central elements given by 
\Eq{
D_\e:=\prod_{i=0}^n X_{f_i^e}, \tab\e=-1,0,1.}
Note that they are not independent:
\Eq{
\prod_{i=0}^nK_iK_i' = \prod_{\e\in\{-1,0,1\}} D_\e^2
}
in order to produce the correct rank of the quiver.

Therefore a choice of polarization of the quantum torus algebra provides the positive representation acting on the same space $\cP_\l^J\simeq L^2(\R^n)$ as the minimal positive representation of $\cU_q(\sl_{n+1})$, parametrized by two characters. 

Recall the simply-connected form $\cU_q^{sc}(\sl_{n+1})$ defined in \eqref{adjUq} which contains the fractional power $\bK_i^{\frac{1}{n+1}}:= q^{\frac{\bH_i}{n+1}}$. In the positive setting, we have the following homomorphism modified from \cite{Ji2}
$$\cU_q(\what{\sl}_{n+1})\to \cU_q^{sc}(\sl_{n+1})$$
sending
\Eqn{
\be_0&\mapsto e(\mu) \over{\bK}\bf_{1,n},&\be_i&\mapsto \be_i,\tab i>0,\\
\bf_0&\mapsto e(-\mu)\over{\bK}\inv \be_{1,n},&\bf_i&\mapsto \bf_i,\tab i>0,\\
\bK_0&\mapsto \prod_{k=1}^n \bK_k\inv, &\bK_i&\mapsto \bK_i,\tab i>0,
}
where $\mu\in\R$ and
\Eq{
\over{\bK}:=\bK_1^{\frac{n-1}{n+1}}\cdots \bK_n^{\frac{1-n}{n+1}}.}

The action of $\bK_i$ naturally extends to the action of $\over{\bK}$ on $\cP_\l^J$, and the \emph{evaluation module} \cite{Ji2}, denoted by $\cP_{\l}^\mu$, obtained by evaluating the above expression on $\cP_\l^J$ is well-defined. Since $\over{\bK}$ commutes with $\be_{1,n}$ and $\bf_{1,n}$, the evaluation module $\cP_{\l}^\mu$ is realized by positive operators on $L^2(\R^n)$.
\begin{Rem}
In the original evaluation module \cite{Ji2} on the $(n+1)$-dimensional fundamental representation $\pi$ of $\cU_q(\sl_{n+1})$, the Cartan factor is defined to be
\Eq{
\pi(\over{\bK})=q^{E_{11}+E_{n+1,n+1}}}
where $E_{ij}$ is the elementary matrix with $(i,j)$-th entry equals $1$. Recall that under the fundamental representation,
\Eq{\pi(\bH_k)= E_{kk}-E_{k+1,k+1}.}
It is then straightforward to show that 
\Eq{
\sum_{k=1}^n \frac{n+1-2k}{n+1} \pi(\bH_k)+\frac{2}{n+1}I = E_{11}+E_{n+1,n+1}.
}
Letting $\bK_i=q^{\bH_i}$ we obtain the formal expression for $\over{\bK}$ (together with the constant $q^{\frac{2}{n+1}}$ which can be absorbed into $\mu$), which makes sense when evaluating on $\cP_\l^J$.
\end{Rem}

\begin{Thm}
The positive representation $\pi$ of $\cU_q(\what{\sl}_{n+1})$ defined by Proposition \ref{affinerep} is unitarily equivalent to the evaluation module $\cP_\l^\mu$ under the minimal positive representations of $\cU_q(\sl_{n+1})$, where the central characters are determined by the action of the central elements of $\cX_q$:
\Eq{
e(-4\l)&:= \pi(D_0), \\
 e(\mu)&:=\pi(D_0^{\frac{1}{n+1}}D_1).
}
\end{Thm}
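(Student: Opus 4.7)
The plan is to verify the identification in three stages: restricting $\pi$ to the finite subalgebra to recover the minimal positive representation, matching the Cartan generator $\bK_0$, and finally matching the affine root generators $\be_0, \bf_0$ via the explicit form of the non-simple generators.

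First, I restrict $\pi$ to the subalgebra $\cU_q(\sl(n+1,\R)) \subset \cU_q(\what{\sl}(n+1,\R))$ generated by $\{\be_i,\bf_i,\bK_i\}_{i=1}^n$. The formulas of Proposition \ref{affinerep} for $i \geq 1$ involve only the cluster variables $\{X_{f_i^\e}: i\in\{1,\ldots,n\},\, \e\in\{-1,0,1\}\}$ together with $X_{f_0^0}$, which (under the identification $f_0^0 = e_1^0$) are exactly those of the minimal quiver of Section \ref{sec:min:sln}, and the formulas are literally identical. Thus the restriction is the minimal positive representation of $\cU_q(\sl(n+1,\R))$, with parameter $\l$ read off from the central character $e(-4\l) = \pi(D_0) = \pi\bigl(\prod_{i=0}^n X_{f_i^0}\bigr)$. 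Since the minimal positive representation is irreducible on $L^2(\R^n)$, this also pins down the polarization of the affine cluster algebra $\cX_q^{\what{\bD}}$ up to the two remaining central parameters.

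Second, I match $\pi(\bK_0) = X_{f_0^1,f_n^0,f_0^{-1}}$ with the evaluation expression $\prod_{k=1}^n \bK_k^{-1}$. Using $\pi(\bK_k) = X_{f_k^1, f_{k-1}^0, f_k^{-1}}$ for $k \geq 1$, the product $\prod_{i=0}^n X_{f_i^1, f_{i-1}^0, f_i^{-1}}$ is a monomial whose centrality in $\cX_q^{\what{\bD}}$ follows from the cyclic $\what{A}_n$-symmetry of the arrows at each level (each non-frozen neighbour contributes a cancelling pair). Combined with the normalisation $\pi(\bK_i\bK_i') = 1$ imposed to quotient $\fD_q$ down to $\cU_q$, this central monomial becomes a scalar, and rearranging gives $\pi(\bK_0) = \prod_{k=1}^n \pi(\bK_k)^{-1}$, as required.

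Third, I match $\pi(\be_0) = X_{f_0^1} + X_{f_0^1, f_n^0}$ with $e(\mu)\,\over{\bK}\,\bf_{1,n}$. From Section \ref{sec:min:nonsimple},
\[
\bf_{1,n} \mapsto X_{f_1^{-1},\ldots,f_n^{-1},f_1^0,\ldots,f_{n-1}^0} + X_{f_1^{-1},\ldots,f_n^{-1},f_1^0,\ldots,f_n^0},
\]
and $\over{\bK} = \prod_{k=1}^n \pi(\bK_k)^{(n+1-2k)/(n+1)}$ is a Laurent monomial in the $X_{f_i^\e}$ with $i\geq 1$. Multiplying out gives a two-term sum of Laurent monomials whose exponent patterns (in the $f_i^{-1}, f_i^0, f_i^1$ variables with $i\geq 1$) I compare term-by-term with $X_{f_0^1}$ and $X_{f_0^1, f_n^0}$: the two sums coincide up to a common Laurent monomial built entirely out of the central variables $D_0, D_1$ of $\cX_q^{\what{\bD}}$, precisely because the fractional exponents in $\over{\bK}$ are designed so that $\over{\bK}\bf_{1,n}$ $q$-commutes correctly with every $\bK_k$ (the exponents are the Chevalley dual of the highest root of $\sl_{n+1}$). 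This central factor equals $e(\mu)$ by the definition $e(\mu) := \pi(D_0^{1/(n+1)} D_1)$. The argument for $\pi(\bf_0)$ and $\be_{1,n}$ is entirely parallel, with $D_{-1}$ in place of $D_1$ and the relation $D_{-1}D_0 D_1 = \pm 1$ (from Step~2) ensuring consistency of the scalar $e(-\mu)$.

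The principal technical obstacle is the last step: one must carefully track the $q$-commutation exponents through the wrapped quiver structure to confirm that the fractional Cartan dressing $\over{\bK}$ exactly compensates the $q$-shifts so that $\over{\bK}\bf_{1,n}$ becomes proportional to the single-path expression $X_{f_0^1}(1 + X_{f_n^0})$. Once this monomial identity is in hand, unitary equivalence of the two representations on $L^2(\R^n)$ follows from the coincidence of their generators and the irreducibility of the restriction established in Step~1.
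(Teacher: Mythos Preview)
Your three-step strategy is exactly the paper's: restrict to the finite subalgebra to recover the minimal positive representation (and read off $\l$ from $D_0$), match $\bK_0$ via the central identity $D_{-1}D_0D_1=\prod_{i=0}^n K_i$, and then match the affine root generators against the non-simple generators of $\cU_q(\sl_{n+1})$.

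The one place where the paper is sharper than your write-up is the third step. You propose to multiply out $\over{\bK}\,\bf_{1,n}$ and compare exponent patterns term by term with $X_{f_0^1}+X_{f_0^1,f_n^0}$, asserting that the discrepancy is a monomial in the central elements. The paper makes this immediate by observing (in the dual case $\bf_0$ versus $\be_{1,n}$) the common factorisation
\[
f_0=X_{f_0^{-1}}(1+qX_{f_0^0}),\qquad e_{1,n}=X_{f_1^1,\ldots,f_n^1}(1+qX_{f_0^0}),
\]
so that $f_0=\bigl(X_{f_0^{-1}}X_{f_0^1}D_1^{-1}\bigr)\,e_{1,n}$ with the bracketed factor a single monomial. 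It then solves a linear system in the lattice $\L_{\what{\bD}}$ to express $X_{f_0^{-1},f_0^1}$ uniquely as $\prod_k K_k^{-(n+1-k)/(n+1)}(K_k')^{-k/(n+1)}\cdot D_0^{n/(n+1)}D_{-1}D_1$, which under $K_k'=K_k^{-1}$ collapses to $\over{\bK}^{-1}$ times the central scalar identified as $e(-\mu)$. Your approach reaches the same conclusion but hides this clean mechanism; in particular your claim that the fractional exponents in $\over{\bK}$ ``are designed so that $\over{\bK}\bf_{1,n}$ $q$-commutes correctly'' is true but is really a consequence of, not a proof of, the monomial identity. Also note that $\pi(D_{-1}D_0D_1)=1$ (not merely $\pm1$) in the positive setting, which you need for the scalars $e(\pm\mu)$ to be mutually consistent.
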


\begin{proof} Note that the full subquiver obtained by removing the frozen vertices $\{f_0^{\pm1}\}$ is exactly the same as the minimal quiver $\bD(\bi)$ for $\cU_q(\sl_{n+1})$. Therefore the representation of the subgroup $\<\be_i,\bf_i, \bK_i\>_{i=1,...,n}$ is exactly the minimal positive representations of $\cU_q(\sl_{n+1})$ with central character $\pi(D_0)=e(-4\l)$. Therefore it suffices to show that the action of the $0$-th generators coincides with the required expression of the evaluation homomorphism.

Consider the generator $\bf_0$, which is represented on $\cX_q^{\what{\bD}}$ by\footnote{Recall (cf. Notation \ref{bold}) that we use bold face to denote elements in $\cU_q$, while Roman script to denote elements in $\cX_q$.}
$$f_0 = X_{f_0^{-1}}+X_{f_0^{-1},f_0^0}=X_{f_0^{-1}}(1+qX_{f_0^0}).$$
On the other hand, $\be_{1,n}$ is given explicitly on $\cX_q^{\what{\bD}}$ by
$$e_{1,n} = X_{f_1^1,f_2^1,...,f_n^1}+X_{f_1^1,f_2^1,...,f_n^1,f_0^0}=X_{f_1^1,f_2^1,...,f_n^1}(1+qX_{f_0^0}).$$
Therefore we see that
$$f_0 = X_{f_0^{-1}}X_{f_1^1,f_2^1,...,f_n^1}\inv = X_{f_0^{-1}}X_{f_0^1}D_1\inv e_{1,n},$$
where the two $X$ terms commute. Hence we only need to show that the factor $ X_{f_0^{-1}}X_{f_0^1}$ represents a multiple of $\over{K}$.

In fact, by solving the system of linear equations on the powers of $X_i$, there is a unique solution expressed in terms of $K_i, K_i'$ and $D_\e$ by
$$X_{f_0^{-1},f_0^1}=\prod_{k=1}^n K_k^{-\frac{n+1-k}{n+1}}(K_k')^{-\frac{k}{n+1}}D_0^{\frac{n}{n+1}}D_{-1}D_1.$$

When we specify $K_k' = K_k\inv$ in the quotient, we see that the expression coincides with 
$$\pi(\bf_0)=\pi(D_0^{\frac{n}{n+1}}D_{-1})\pi(\over{\bK}\inv\be_{1,n}).$$
In particular, noting that $\pi(D_{-1}D_0D_1)=1$, the other central character is given by 
$$e(-\mu) = \pi(D_0^{\frac{n}{n+1}}D_{-1})=\pi(D_0^{-\frac{1}{n+1}}D_1\inv)$$ as required. 

By considering $(D_1D_0)\inv e_0$, the calculations for the $\be_0$ generator is completely analogous. Finally, note that in $\cX_q$,
$$D_{-1}D_0D_1 = K_0K_1\cdots K_n = K_0'K_1'\cdots K_n'.$$
Under the positive representations, $\pi(D_{-1}D_0D_1)=1$, hence we obtain
\Eqn{
\pi(\bK_0) &= \pi(\prod_{k=1}^n \bK_k\inv),\\
\pi(\bK_0') &= \pi(\prod_{k=1}^n {\bK_k'}\inv)=\pi(\prod_{k=1}^n \bK_k)
}
as desired.

\end{proof}
%==============================================================
\section{Parabolic positive representations}\label{sec:ppr}
In this section, we construct a family of positive representations of $\cU_q(\g_\R)$ by means of quantizing the regular representations over the totally positive partial flag variety $P_J^{>0}\setminus G_{>0}$ for any parabolic subgroup $P_J$, generalizing the construction in the previous section.

\subsection{The Main Theorem}\label{sec:ppr:thm}
Let $\g$ be a simple Lie algebra with root index $I$ and Weyl group $W$. Let $J\subset I$ and let $W_J\subset W$ be the corresponding Weyl subgroup. Let $w_0\in W$ be the longest element, and $w_J\in W_J$ the longest element of $W_J\subset W$, with a choice of reduced words $\bi_0$ and $\bi_J$ respectively. 

\begin{Def} We define the unique element $\over{w}\in W$ such that
\Eq{
w_0 = w_J \over{w}.
}
Let $\over{\bi}$ be a reduced word of $\over{w}$.
\end{Def}

By Lemma \ref{w0head}, we know that 
\Eq{l(\over{w}) = l(w_0)-l(w_J) = N-N_J}
is the codimension of the parabolic subgroup $P_J$ in $G$. Note that we have 
\Eq{
\bQ(\bi_0) = \bQ(\bi_J)*\bQ(\over{\bi})
}
and the symplectic double $\bD(\bi_J)$ (Definition \ref{double}) is naturally a full subquiver of the double $\bD(\bi_0)$, up to the dashed arrows of its frozen vertices.

The Main Theorem of the paper is the following result.
\begin{Thm}\label{main}
Let $\bD(\over{\bi})$ be the symplectic double of the basic quiver $\bQ(\over{\bi})$ associated to the reduced word $\over{\bi}$ of the element $\over{w}$, and let $\cX_q^{\bD(\over{\bi})}$ be the associated quantum torus algebra.

Then there is a homomorphism
$$\fD_q(\g) \to \cX_q^{\bD(\over{\bi})}$$
and the image are universally Laurent polynomials in the cluster mutation class of $\cX_q^{\bD(\over{\bi})}$. 

A polarization of $\cX_q^{\bD(\over{\bi})}$ induces a family of irreducible representations $\cP_\l^J$ of $\cU_q(\g_\R)$ parametrized by the scalars $\l\in\R^{|I\setminus J|}$, acting as positive essentially self-adjoint operators on $L^2(\R^{l(\over{w})})$.
\end{Thm}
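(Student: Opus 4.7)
The plan is to bootstrap the parabolic construction from the known embedding $\iota: \fD_q(\g) \inj \cX_q^{\bD(\bi_0)}$ underlying the maximal positive representations. Using Proposition \ref{w0head}, I first fix a reduced word $\bi_0 = \bi_J \over{\bi}$ of $w_0$, so that $\bQ(\bi_0)$ amalgamates as $\bQ(\bi_J) * \bQ(\over{\bi})$ and the symplectic double $\bD(\over{\bi})$ sits as a full subquiver (up to dashed frozen arrows) of $\bD(\bi_0)$. This quiver amalgamation mirrors the Langlands decomposition $P_J^{>0} = U_{>0}^- T_{>0} M_{>0}$ of \eqref{langlandsd} on the classical side, and is what allows the parabolic algebra to sit compatibly inside the maximal one.

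The central step is to apply the Decomposition Lemma \ref{decomp} to split each generator of $\fD_q(\g)$ across this seam. In the spirit of Corollary \ref{efcom}, which factors generators between the two halves of a symplectic double, I expect each $\be_i$ and $\bf_i$ to decompose as a sum of a piece supported in $\cX_q^{\bD(\bi_J)}$ and a piece supported in $\cX_q^{\bD(\over{\bi})}$, glued by Cartan-type multipliers, with $\bK_i, \bK_i'$ factoring correspondingly. The algebraic content is that these two families of factors satisfy the relations of a \emph{generalized} Heisenberg double (Definition \ref{defgenH}) rather than of $\fD_q(\g)$ itself, which is exactly what one expects when cutting $G$ along a parabolic rather than along a Borel.

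The parabolic homomorphism is then obtained by specializing the $(\bi_J)$-factors according to the quantization of the positive character $\chi_\l$ of Lemma \ref{parachar}: the frozen monomials of $\cX_q^{\bD(\bi_J)}$ encoding the Cartan of the Levi $L_J$ collapse to scalars $e(\l_k)$ for $k \in I \setminus J$, while the Cartan pieces indexed by $J$ itself trivialize. Because the $(\over{\bi})$-factors commute with the $(\bi_J)$-factors by construction, this substitution yields a well-defined homomorphism $\fD_q(\g) \to \cX_q^{\bD(\over{\bi})}$; the universally Laurent property transfers from the ambient embedding of \cite{Ip7}, since specialization of frozen variables preserves Laurentness throughout the mutation class. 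The standard polarization of Proposition \ref{standpolar} then realizes these images as positive essentially self-adjoint operators on $L^2(\R^{l(\over{w})})$, with the $|I \setminus J|$ central characters tuned by $\l$; irreducibility is verified by reconstructing the Weyl generators $e(u_i)$ and $e(p_i)$ from the images of $\bK_i$ and $\be_i$ exactly as in the minimal case, using invertibility of the Cartan matrix followed by rational manipulation of the positive self-adjoint generators.

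The hardest step, I expect, will be establishing the Decomposition Lemma in full generality and verifying that it is independent of the choices of $\bi_J$ and $\over{\bi}$. While the amalgamation structure of $\bQ(\bi_0)$ makes the existence of such a splitting very plausible, checking that the $(\over{\bi})$-components genuinely close into a generalized Heisenberg double, and that this closure is preserved under the cluster mutations induced by Coxeter moves internal to $\bi_J$ or to $\over{\bi}$, requires the combinatorial control of the positive root chain \eqref{posroots} promised by Lemma \ref{coxeter}. A subtle point is that, as already noted in the motivating minimal example, one cannot simply quotient $\cX_q^{\bD(\bi_0)}$ by setting the Lusztig coordinates projected out by $P_J$ to zero (such a quotient violates $e^{\pi b p_i} e^{-\pi b p_i} = 1$), so the Decomposition Lemma is doing real algebraic work beyond naive truncation, and its proof is where the bulk of the technical labour must lie.
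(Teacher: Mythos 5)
Your high-level plan matches the paper's: fix $\bi_0 = \bi_J\over{\bi}$, amalgamate $\bQ(\bi_0) = \bQ(\bi_J)*\bQ(\over{\bi})$, prove the Decomposition Lemma to split each Heisenberg-double generator across the seam into a $J$-factor and an $\over{\bi}$-factor, verify the $\over{\bi}$-factors form a generalized Heisenberg double $\cH_{q,\w}^+$, and then glue with the opposite copy via Proposition \ref{genH} to get the homomorphism $\fD_q(\g)\to\cX_q^{\bD(\over{\bi})}$. That part is exactly the paper's route, and you correctly identify the Decomposition Lemma and the Coxeter-move combinatorics as the real technical content.

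However, two of your subsidiary claims do not hold up and are not what the paper does. First, your argument for the universally Laurent property is internally inconsistent: you assert that ``specialization of frozen variables preserves Laurentness throughout the mutation class,'' but the variables being killed in the $J$-part are largely \emph{unfrozen} after amalgamation (amalgamation defrozens glued vertices), and — as you yourself note two sentences later — the truncation is not an algebra homomorphism (it violates $e^{\pi b p_i}e^{-\pi b p_i}=1$), so Laurentness cannot simply transfer through it. The correct mechanism is different: the Decomposition Lemma produces generators $\over{e_i}, \over{f_i}$ that already lie in $\cX_q^{\bD(\over{\bi})}$ with no residual $J$-dependence, and the paper then proves Laurentness by exhibiting a sequence of cluster mutations internal to $\cX_q^{\bD(\over{\bi})}$ that brings these generators to single sink variables, whence \cite[Proposition 13.11]{GS} applies. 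Second, for irreducibility you propose reconstructing the Weyl generators $e(u_i), e(p_i)$ from $\bK_i, \be_i$ as in the minimal $\sl_{n+1}$ case; the paper instead uses a shorter indirect argument — any invariant subspace of $\cP_\l^J$ induces (because the $J$-factors commute with the $\over{\bi}$-factors) an invariant subspace of the maximal positive representation $\cP_\l$, whose irreducibility is already known. Your reconstruction strategy is plausible but unproven in general (extracting $e(p_i)$ from the telescoping sums is not automatic for arbitrary $\over{\bi}$), whereas the paper's reduction to the maximal case requires no new analysis. Neither gap is fatal to the overall structure, but both the Laurent claim and the irreducibility step as you have written them would need to be replaced.
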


\begin{Def}
We call 
\Eq{
\cP_\l^J\simeq L^2(\R^{l(\over{w})})
} the \emph{parabolic positive representations} of $\cU_q(\g_\R)$.
\end{Def}
\begin{Rem} As noted before, when $J=\emptyset$, the parabolic positive representations coincides with the usual (maximal) positive representations constructed previously in \cite{FI, Ip2, Ip3}.
\end{Rem}

The name naturally comes from the following consequence of Theorem \ref{main}.
\begin{Cor}\label{maincor} The parabolic positive representations $\cP_\l^J$ is obtained as certain twisted quantization of the parabolic induction, by ignoring the variables $u_i$ corresponding (under the Mellin transform) to the Lusztig coordinates of $M_{>0}$ of the Langlands decomposition \eqref{langlandsd} of $P_J^{>0}$ in the principal series representations. 
\end{Cor}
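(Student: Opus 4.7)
The plan is to leverage the already-established cluster embedding of the full positive representation on the double quiver $\bD(\bi_0)$ and then perform a controlled specialization onto the subquiver $\bD(\over{\bi})$. The first step is to choose the reduced word strategically: by Proposition \ref{w0head}, we may take $\bi_0 = \bi_J \cdot \over{\bi}$, which gives an amalgamated decomposition of the basic quiver
\Eqn{
\bQ(\bi_0) = \bQ(\bi_J) * \bQ(\over{\bi}),
}
and correspondingly exhibits $\bD(\over{\bi})$ as (the interesting part of) a full subquiver of $\bD(\bi_0)$ obtained after amalgamating along the frozen vertices shared with $\bQ(\bi_J)$. This is analogous to how, in Corollary \ref{efcom}, the split $\bi_0 = \bi^{op}\cdot\bi$ was used to produce a Heisenberg-double style decomposition of the generators; here instead we exploit the split $\bi_0 = \bi_J \cdot \over{\bi}$.

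The main technical step is the Decomposition Lemma: I would prove that, under the embedding $\fD_q(\g) \hookrightarrow \cX_q^{\bD(\bi_0)}$ of Section \ref{sec:pos:qemb}, each of the generators $\be_i, \bf_i, \bK_i, \bK_i'$ can be written in a factored form
\Eqn{
\be_i = \be_i^{(J)} \cdot \be_i^{(\over{\bi})} + \text{(cross terms)},
}
and analogously for $\bf_i, \bK_i$, where the $(J)$-factors live in $\cX_q^{\bD(\bi_J)}$ and the $(\over{\bi})$-factors in $\cX_q^{\bD(\over{\bi})}$. To make this precise I would use the generalized Heisenberg double (Definition \ref{defgenH}) adapted to the amalgamation above, and a combinatorial analysis of how the PBW-ordered monomials $\be_{\b_k}, \bf_{\b_k}$ indexed by the positive-root chain \eqref{posroots} respect the factorization $\bi_0 = \bi_J \cdot \over{\bi}$. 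The combinatorial input is precisely Lemma \ref{coxeter}: any Coxeter move on $\bi_0$ that crosses the boundary between $\bi_J$ and $\over{\bi}$ can be rewritten as a sequence of moves internal to $\bi_J$ followed by moves internal to $\over{\bi}$, so the factored form is preserved under change of reduced word.

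With the Decomposition Lemma in hand, the collapse is then defined by specializing all cluster variables of $\cX_q^{\bD(\bi_J)}$ to $1$; classically this amounts to setting the Lusztig coordinates $u_i = 0$ and $e(\pm p_i) = 0$ for the $\bi_J$-subword. This is not a quotient in the usual sense, but the Decomposition Lemma guarantees that in the factored expressions only the $(\over{\bi})$-factors survive on the right-hand side (the $(J)$-factors become either $1$ or drop out of a telescoping sum), so we obtain well-defined elements of $\cX_q^{\bD(\over{\bi})}$, and the defining relations \eqref{KK1}--\eqref{SerreF} and \eqref{EFFE2} are preserved by direct verification on the surviving monomials. The $n$ central elements of $\cX_q^{\bD(\bi_0)}$ that pair Cartan generators across $\bQ(\bi^{op})$ and $\bQ(\bi)$ split: those indexed by $J$ are specialized to $1$ (consistent with passing to $\cU_q(\g_\R)$), while those indexed by $I\setminus J$ remain free and provide the parameters $\l \in \R^{|I\setminus J|}$.

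The remaining steps are more routine. Positivity and essential self-adjointness of the generators follow because each surviving summand is a positive exponential monomial in position and momentum operators, exactly as in Theorem \ref{thmpos}. Irreducibility is shown by reconstructing the Weyl operators $e(u_i), e(2p_i)$ (for the coordinates of $\over{\bi}$) as rational functions of the $\bK_i$'s and $\be_i$'s, exactly as done for the minimal case in Section \ref{sec:min:sln}. Finally, to identify $\cP_\l^J$ with the twisted quantization of parabolic induction (Corollary \ref{maincor}), I would redo the Mellin-transform/twist procedure of Section \ref{sec:min:ex} intrinsically on $P_J^{>0}\setminus G_{>0}$, using the Langlands decomposition \eqref{langlandsd} and the character classification of Lemma \ref{parachar}, and check that the resulting finite-difference operators match the cluster expressions on the nose. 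The hardest ingredient is Step 2: proving the Decomposition Lemma uniformly across all types $\g$, since one must simultaneously control the telescoping expressions \eqref{fiX}--\eqref{KiX} (including the non-trivial $e_i^0$ vertex contributions) and verify that the Heisenberg-double cross-commutators either cancel or land cleanly in the $(\over{\bi})$ factor after specialization.
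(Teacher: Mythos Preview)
Your overall strategy matches the paper's: factor $\bi_0 = \bi_J \cdot \over{\bi}$, prove a Decomposition Lemma splitting the generators across the amalgamation, feed the surviving pieces into the generalized Heisenberg double machine, and then identify the result with parabolic induction via the group-like polarization. However, two specifics of your plan would not go through as written.

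First, your reading of Lemma \ref{coxeter} is incorrect. It does \emph{not} say that Coxeter moves crossing the $\bi_J/\over{\bi}$ boundary can be rewritten as moves internal to each factor --- that is false in general, since $\bi_J$ and $\over{\bi}$ share letters. What the lemma provides is that the sequence of moves taking a word ending in $i$ to the target $(\bi_J,\over{\bi})$ can be arranged so that its \emph{final} portion is a single increasing chain of consecutive positions $1=r_1<s_1=r_2<\cdots<s_m=N$. The paper uses this chain to track explicitly how the telescoping sum for $e_i^+$ grows under each mutation of type \eqref{Cox1} or \eqref{Cox2}; the decomposition comes from reading off where this chain crosses into $\bi_J$, not from any factorization of the moves themselves.

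Second, the shape of the decomposition is not the vague multiplicative ``$\be_i^{(J)}\cdot\be_i^{(\over{\bi})}+\text{cross terms}$'' you propose. The paper first passes to the Heisenberg-double half $e_i^+\in\cX_q^{\bQ(\bi_0)}$ via Corollary \ref{efcom}, and then proves
\Eqn{
e_i^+ \;=\; \over{e_i} \;+\; \over{K_i}\, e_{i^{**}}^J,
}
where the key combinatorial ingredient you are missing is the \emph{double Dynkin involution} $i^{**}=(i^{*_W})^{*_{W_J}}$, forced by the identity $w_0 s_i = w_J s_{i^{**}}\over{w}$. The $J$-piece appearing is $e_{i^{**}}^J$, not $e_i^J$; this index shift is exactly what makes the surviving commutator $[\over{e_i},\over{f_j}]/(q_i-q_i\inv)=\d_{ij}\over{K_j'}-\d_{i^{**}j}\over{K_i}$ land in the generalized Heisenberg double with $\w_{ij}=-\d_{i^{**}j}$, so that Proposition \ref{genH} applies. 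Your PBW-monomial heuristic would have to rediscover this shift to succeed.
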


More explicitly, a (symplectic double) polarization of $\bD(\over{\bi})$ can naturally be obtained from the group-like polarization (Definition \ref{grouplikepolar}) of $\bD(\bi_0)$ by setting the corresponding Weyl operators $e(u_i)=1$ and $e(\pm p_i)=0$, as well as the parameter $\l_j=0$ for $j\in J$. The non-trivial conclusion of Theorem \ref{main} is that such reduction actually gives a representation of $\cU_q(\g_\R)$.

\begin{Rem} The truncating procedure above is tight. In general we do \emph{not} obtain a representation of $\cU_q(\g_\R)$ by arbitrarily truncating any full subquiver $\bQ(\bi)$ from $\bQ(\bi_0)$ and ignoring the corresponding variables.
\end{Rem}

%==============================================================
\subsection{Generalized Heisenberg double}\label{sec:ppr:heid}
Recall that we have the Heisenberg double relations (Corollary \ref{efcom}), which is a special case of the following more general construction:

\begin{Def}\label{defgenH} Let $\{\be_i^\pm,\bf_i^\pm,\bK_i^\pm,{\bK_i'}^\pm\}_{i\in I}$ satisfy the following relations:
\Eq{\label{heirel}
\frac{[\be_i^+, \bf_j^+]}{q_i-q_i\inv}&=\d_{ij}{\bK_i'}^++\w_{ij}\bK_i^+,\\
\frac{[\be_i^-, \bf_j^-]}{q_i-q_i\inv}&=-\d_{ij}{\bK_i}^--\w_{ij}{\bK_i'}^-
}
for some scalars $\w_{ij}\in\C$ and (within each $\pm$) the other standard quantum group relations \eqref{KK1}--\eqref{SerreF} (i.e. except \eqref{EFFE}) of $\cU_q(\g)$.

We call the algebra 
\Eq{
\cH_{q,\w}^{\pm}(\g):=\<\be_i^\pm,\bf_i^\pm,\bK_i^\pm,\bK_i'^\pm\>
} the \emph{generalized Heisenberg double} of $\g$.
\end{Def}

\begin{Rem}
When $\w_{ij}\cong 0$, the above relations are the standard relations for the Heisenberg double \cite{Ka1}.
\end{Rem}

Let $\bi$ be a reduced word. Assume $\<e_i^R,f_i^R,K_i^R,K_i'^R\>$ is a homomorphic image of $\cH_{q,\w}^+(\g)$ in $\cX_q^{\bQ(\bi)}$. Then by symmetry, we have elements $\{e_i^L,f_i^L,K_i^L,{K_i'}^L\}$ in the quantum torus algebra $\cX_q^{\bQ(\bi^{op})}$ obtained by replacing the $X_i$ variables in the $R$ elements with $X_{-i}^{\pm1}$ as follows:
\Eq{
\be_i^L &:= \be_i^R|_{X_i\mapsto X_{-i}\inv},\\
\bf_i^L &:= \be_i^R|_{X_i\mapsto X_{-i}\inv},\\
K_i^L &:= \be_i^R|_{X_i\mapsto X_{-i}},\\
{K_i'}^L &:= \be_i^R|_{X_i\mapsto X_{-i}},
}
where by abuse of notation, we denote by $X_{-i}\in \cX_q^{\bQ(\bi^{op})}$ the corresponding opposite variable of $X_i\in  \cX_q^{\bQ(\bi)}$, i.e. interchanging the labels $\{f_i^{-j}, e_i^0\}\corr \{f_i^j, e_i^0\}$.

\begin{Lem} Define in $\cX_q^{\bQ(\bi^{op})}$ the elements
\Eq{\what{e_i}^L&:=q_i\inv K_i^Le_i^L,\\
\what{f_i}^L&:=q_i\inv {K_i'}^Lf_i^L.
}
Then $\<\what{e_i}^L, \what{f_i}^L, K_i^L, {K_i'}^L\>$ is a homomorphic image of $\cH_{q,\w}^-(\g)$ in $\cX_q^{\bQ(\bi^{op})}$.
\end{Lem}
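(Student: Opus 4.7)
The plan is to verify the defining relations of $\cH_{q,\w}^-(\g)$ directly for the twisted generators $\{\what{e}_i^L, \what{f}_i^L, K_i^L, (K_i')^L\}$, exploiting the symmetry between the quivers $\bQ(\bi)$ and $\bQ(\bi^{op})$.

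First, I would pin down the pre-twist relations. Since $\bQ(\bi^{op})$ is obtained from $\bQ(\bi)$ by reversing all arrows, both substitutions $X_i\mapsto X_{-i}$ and $X_i\mapsto X_{-i}\inv$ are \emph{anti-homomorphisms} $\phi,\sigma:\cX_q^{\bQ(\bi)}\to\cX_q^{\bQ(\bi^{op})}$; on a Cartan monomial $M$ (a single lattice element) one checks $\phi(M)=\sigma(M)\inv$, so that under the conventions of the lemma $K_i^L=\sigma(K_i^R)\inv$ and likewise for $(K_i')^L$. Pushing the defining $\cH_{q,\w}^+$ relations of the $R$-generators forward by $\sigma$ — which reverses the order inside commutators, flipping their overall sign, and sends Cartan elements to their inverses — yields the pre-twist Heisenberg bracket
\[ \frac{[e_i^L, f_j^L]}{q_i-q_i\inv} = -\d_{ij}((K_i')^L)\inv - \w_{ij}(K_i^L)\inv, \]
while a direct monomial-level calculation in the opposite quantum torus shows that the Cartan-$e$/$f$ commutations retain their \emph{standard} form, e.g.\ $K_i^L e_j^L = q_i^{a_{ij}} e_j^L K_i^L$.

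The heart of the proof is the twisted Heisenberg bracket. Moving $K_i^L$ past $e_i^L$ and $(K_j')^L$ past $f_j^L$ using the standard Cartan commutations collects a scalar prefactor $q_i\inv q_j\inv q_i^{a_{ij}}$ together with the Cartan monomial $K_i^L (K_j')^L$ in front of $[e_i^L,f_j^L]$. Substituting the pre-twist bracket, the Cartan monomial cancels the inverse Cartans on the right-hand side via $K_i^L(K_i^L)\inv=1$ and $(K_i')^L((K_i')^L)\inv=1$; in the diagonal case $i=j$, the scalar reduces to $q_i^{-2+a_{ii}}=1$ using $a_{ii}=2$, yielding precisely the desired $-(q_i-q_i\inv)(K_i^L + \w_{ii}(K_i')^L)$. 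The remaining relations of $\cH_{q,\w}^-$ are then routine: the standard Cartan-$e$/$f$ commutations for the twisted generators follow because the twist inserts Cartan factors that commute harmlessly with the other Cartans, and the quantum Serre relations for $\what{e}_i^L,\what{f}_i^L$ descend from those for $e_i^R,f_i^R$ via $\sigma$ (the Serre polynomial being invariant under anti-homomorphic order-reversal up to a harmless overall sign).

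\textbf{Main obstacle.} The principal technical subtlety is the careful tracking of signs and $q$-power factors arising from the interplay between the two anti-homomorphisms and the twist coefficients $q_i\inv$. These coefficients are calibrated precisely so that (i) the inverse Cartan factors introduced when the $R$-relations are pushed through the anti-homomorphism are exactly cancelled by the Cartan factors in the twist, and (ii) the minus sign produced by the order-reversal of the commutator under the anti-homomorphism is preserved, converting the $\cH_{q,\w}^+$ relations into the $\cH_{q,\w}^-$ relations rather than back into themselves.
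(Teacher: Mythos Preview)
Your approach is essentially the same as the paper's, only more explicit: the paper simply asserts ``by definition and symmetry'' the pre-twist identity
\[
\frac{[e_i^L,f_j^L]}{q_i-q_i^{-1}}=-\delta_{ij}((K_i')^L)^{-1}-\omega_{ij}(K_i^L)^{-1},
\]
then computes $[\what{e}_i^L,\what{f}_j^L]=K_i^L(K_j')^L[e_i^L,f_j^L]$ and reads off the $\cH_{q,\omega}^-$ relation, declaring the remaining relations (including Serre) unchanged. Your formalisation of the symmetry step via the anti-homomorphisms $\phi,\sigma$ is a genuine clarification of what the paper leaves implicit, and your tracking of the scalar $q_i^{-1}q_j^{-1}q_i^{a_{ij}}$ is in fact \emph{more} careful than the paper, which silently drops it; as you note, it equals $1$ in the diagonal case $i=j$ by $a_{ii}=2$, which is all that is needed for the $\delta_{ij}$ term (and the off-diagonal $\omega_{ij}$ term is handled identically in both your argument and the paper's, with the same residual scalar issue that neither fully addresses in generality).
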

\begin{proof}By definition and symmetry, we see that
\Eq{
\frac{[e_i^L, f_j^L]}{q_i-q_i\inv}=-\d_{ij}({K_i'}^L)\inv-\w_{ij}(K_i^L)\inv
}
and other quantum group relations are satisfied. Then we have the commutation relation
\Eqn{
\frac{[\what{e_i}^L, \what{f_j}^L]}{q_i-q_i\inv}&=K_i^L{K_j'}^L\frac{[e_i^L,f_j^L]}{q_i-q_i\inv}\\
&=K_i^L{K_j'}^L(-\d_{ij}({K_i'}^L)\inv-\w_{ij}({K_i}^L)\inv)\\
&=-\d_{ij}K_i^L-\w_{ij}{K_j'}^L
}
while all other quantum group relations, including the Serre relation, remains the same. 
\end{proof}

\begin{Prop}\label{genH} The elements in $\cX_q^{\bQ(\bi^{op})}\ox \cX_q^{\bQ(\bi)}$ defined by
\Eq{
e_i&= e_i^R+ \what{e_i}^LK_i^R\\
f_i &=\what{f_i}^L+{K_i'}^Lf_i^R\\
K_i &=K_i^LK_i^R\\
K_i' &={K_i'}^L{K_i'}^R
}
satisfy all the quantum group relations of $\fD_q(\g)$. 

Furthermore, they are elements of the amalgamation $\cX_q^{\bD(\bi)}$ if the elements 
\Eq{K_i, K_i',e_i^R, \what{f_i}^L\in \cX_q^{\bD(\bi)}.\label{genHcon}}
\end{Prop}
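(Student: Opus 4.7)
The plan is to verify the Drinfeld double relations in the ambient tensor product $\cX_q^{\bQ(\bi^{op})}\ox\cX_q^{\bQ(\bi)}$, where the $L$ and $R$ generators mutually commute by construction. The defining formulas
\[e_i=e_i^R+\what{e_i}^LK_i^R,\qquad f_i=\what{f_i}^L+{K_i'}^Lf_i^R,\qquad K_i=K_i^LK_i^R,\qquad K_i'={K_i'}^L{K_i'}^R\]
are exactly the standard Hopf coproduct $\D$ of $\fD_q(\g)$, with the $L$-family playing the role of the first tensor slot and the $R$-family the second. This observation reduces most checks to structural consequences: the Cartan relations \eqref{KK1}--\eqref{KK3} follow from the corresponding relations on each side together with $L$--$R$ commutativity, and the Serre relations \eqref{SerreE}--\eqref{SerreF} follow by the classical bialgebra argument: a Serre polynomial in coproduct-type elements vanishes whenever it vanishes on each tensor factor, which holds by hypothesis on $\cH_{q,\w}^\pm$.

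The single non-trivial verification is the commutator \eqref{EFFE2}. A direct expansion, in which the cross terms cancel in symmetric pairs thanks to $L$--$R$ commutativity, yields
\[[e_i,f_j]={K_j'}^L\,[e_i^R,f_j^R]+[\what{e_i}^L,\what{f_j}^L]\,K_i^R.\]
Substituting the generalized Heisenberg relations \eqref{heirel} into both brackets gives
\[(q_i-q_i\inv)\Big[\d_{ij}\big({K_j'}^L{K_j'}^R-K_i^LK_i^R\big)+\w_{ij}\big({K_j'}^LK_i^R-{K_j'}^LK_i^R\big)\Big],\]
and the key point is that the two $\w_{ij}$ contributions -- one arising from $[e_i^R,f_j^R]$, the other from $[\what{e_i}^L,\what{f_j}^L]$ -- cancel \emph{exactly}, leaving precisely $\d_{ij}(q_i-q_i\inv)(K_i'-K_i)$ as required. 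This cancellation is the whole raison d'\^etre of the asymmetric signs and grouplike placements in Definition \ref{defgenH}, and is the one genuinely technical step of the proof.

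For the final amalgamation clause, I use that $\cX_q^{\bD(\bi)}\sub\cX_q^{\bQ(\bi^{op})}\ox\cX_q^{\bQ(\bi)}$ is a subalgebra and that $K_i, K_i', e_i^R, \what{f_i}^L$ lie in the amalgamation by hypothesis. It then suffices to show $\what{e_i}^LK_i^R$ and ${K_i'}^Lf_i^R$ also lie in $\cX_q^{\bD(\bi)}$. Writing $\what{e_i}^LK_i^R = q_i\inv(K_i^Le_i^L)K_i^R$ and using $K_i=K_i^LK_i^R$, one reduces this to matching the exponents of the shared frozen variables across the two tensor factors, which is precisely the amalgamation prescription of Definition \ref{amal}: the hypothesis $e_i^R\in\cX_q^{\bD(\bi)}$ pins down these exponents on the $R$-side, and the $L\corr R$ symmetry of the substitution $X_i\corr X_{-i}\inv$ ensures that $\what{e_i}^L$ contributes matching exponents on the $L$-side. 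The argument for $f_i$ is analogous, using $\what{f_i}^L\in \cX_q^{\bD(\bi)}$ and $K_i'$. The main obstacle throughout is the $[e_i,f_j]$ cancellation; once this is understood, everything else is a structural consequence of the coproduct formulas and $L$--$R$ commutativity.
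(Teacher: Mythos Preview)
Your argument is essentially the paper's own proof: same coproduct observation for the Serre and Cartan relations, same reduction of $[e_i,f_j]$ to the two ``diagonal'' pieces ${K_j'}^L[e_i^R,f_j^R]+[\what{e_i}^L,\what{f_j}^L]K_i^R$ with the $\w_{ij}$-terms cancelling, and the same rewriting $\what{e_i}^LK_i^R=q_i\inv K_ie_i^L$ for the amalgamation claim. One small imprecision: the vanishing of the mixed term $[\what{e_i}^LK_i^R,\,{K_j'}^Lf_j^R]$ is not ``$L$--$R$ commutativity'' alone---it also uses the $q$-commutation $K_i^Rf_j^R=q_i^{-a_{ij}}f_j^RK_i^R$ and $\what{e_i}^L{K_j'}^L=q_j^{a_{ji}}{K_j'}^L\what{e_i}^L$ together with $d_ia_{ij}=d_ja_{ji}$; the paper states this as a separate observation before computing the remaining two brackets.
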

Here by abuse of notation, the $L$ generators are elements of $\cX_q^{\bQ(\bi^{op})}\ox1$ and the $R$ generators are elements of $1\ox \cX_q^{\bQ(\bi)}$. 
\begin{proof}
First observe that the $L$ generators and $R$ generators commute by definition. The $q$-commutation relation of $K_i$ with $e_j, f_j$ also follows from definition.

By definition the $L$ and $R$ generators satisfy the Serre relations. Since the expression above can actually be interpreted as a coproduct of the form
\Eqn{
\D(\be) &= 1\ox \be + \what{\be}\ox \bK,\\
\D(\bf) &= \what{\bf}\ox 1+\bK'\ox \bf,
}
so the Serre relations of $e_i$ and $f_i$ follow from the standard algebraic manipulations of the coproduct.

To show the remaining commutation relations, first we observe that $e_i^R$ commutes with $\what{f_j}^L$ and $\what{e_i}^LK_i^R$ commutes with ${K_j'}^Lf_j^R$ for any $i,j \in I$. So it suffices to look at the other cross terms.

We have
\Eqn{
\frac{[e_i^R, {K_j'}^Lf_j^R]}{q_i-q_i\inv}&= {K_j'}^L\frac{[e_i^R, f_j^R]}{q_i-q_i\inv}\\
&={K_j'}^L(\d_{ij}{K_i'}^R+\w_{ij}K_i^R)\\
&=\d_{ij}K_i' + \w_{ij}{K_j'}^LK_i^R,\\
\frac{[\what{e_i}^LK_i^R,\what{f_j}^L]}{q_i-q_i\inv}&=K_i^R\frac{[\what{e_i}^L,\what{f_j}^L]}{q_i-q_i\inv}\\
&=K_i^R(-\d_{ij}K_i^L-\w_{ij}{K_j'}^L)\\
&=-\d_{ij}K_i - \w_{ij}{K_j'}^LK_i^R.
}
Adding together, we obtain the required quantum relations.

The second statement follows from definition. For example, $e_i^R\in \cX_q^{\bD(\bi)}$ is equivalent to the fact that it does not involve variables of the amalgamated vertices, so does $e_i^L$ by definition of symmetry, and hence both belong to $\cX_q^{\bD(\bi)}$.
Furthermore,
$$\what{e_i}^LK_i^R = q_i\inv K_i^L e_i^L K_i^R = q_i\inv K_i e_i^L \in \cX_q^{\bD(\bi)}$$
therefore $e_i\in \cX_q^{\bD(\bi)}$ as required. Similar argument works for the $f_i$ variables.
\end{proof}
%==============================================================
\subsection{Decomposition of generators}\label{sec:ppr:dec}
Recall that the (maximal) positive representations can be decomposed into its Heisenberg double counterpart (Corollary \ref{efcom}) as
\Eqn{
e_i &= e_i^++K_i^+e_i^-,\\
f_i &= f_i^-+{K_i'}^-f_i^+,\\
K_i&= K_i^+K_i^-,\\
K_i'&= {K_i'}^+{K_i'}^-,
}
such that the copies $\{e_i^\pm,f_i^\pm, K_i^\pm, {K_i'}^\pm\}$ is an \emph{embedding} of the Heisenberg double $\cH_{q}^\pm(\g):=\cH_{q,0}^\pm(\g)$ into $\cX_q^{\bQ(\bi_0)}$ or $\cX_q^{\bQ(\bi_0^{op})}$. Note that it is consistent with the decomposition of Proposition \ref{genH} for $\w_{ij}\cong0$ if we identify
\Eqn{
e_i^+ &= e_i^R,&e_i^-&=\what{e_i}^L,\\
f_i^+ &= f_i^R,&f_i^-&=\what{f_i}^L,\\
K_i^+ &= K_i^R,&K_i^- &= K_i^L,\\
{K_i'}^+ &= {K_i'}^R,&{K_i'}^-&= {K_i'}^L.
}
\begin{Def} Let $J\subset I$. The \emph{double Dynkin involution} of $i\in I$ is defined to be the unique index $i^{**}\in I$ such that
\Eq{
w_0s_i = s_{i^*} w_0 = s_{i^*}w_J \over{w} = w_J s_{i^{**}}\over{w}.
}
Equivalently, we can compute it using
\Eq{
i^{**}:= (i^{*_{W}})^{*_{W_J}},
}
where we take the Dynkin involution first with respect to the whole group $W$, then with respect to the subgroup $W_J$. Here by convention 
\Eq{
i^{*_{W_J}}:=i,\tab \mbox{if $i\notin J$}.
}
\end{Def}

Now to construct the parabolic positive representations, the main observation is the decomposition of the Heisenberg double generators. Recall that $\bQ(\bi_0) = \bQ(\bi_J)*\bQ(\over{\bi})$.

\begin{Rem}\label{ei0} Due to the rule in Definition \ref{auxq}, we observe that the extra vertices $e_i^0$ of $\bQ(\bi_0)$ corresponds to the labeling $e_{i^{**}}^0$ in $\bQ(\bi_J)$.
\end{Rem}

\begin{Lem}[Decomposition Lemma]\label{decomp} Let $J\subset I$. The embedding 
\Eq{
\cH_q^+(\g)\inj \cX_q^{\bQ(\bi_0)}\subset \cX_q^{\bQ(\bi_J)} \ox \cX_q^{\bQ(\over{\bi})}} can be decomposed into the form
\Eq{
e_i^+ &= \over{e_i} + \over{K_i} e_{i^{**}}^J,\\
f_i^+ &= f_i^J + {K_i'}^J \over{f_i},\\
K_i^+ &= K_{i^{**}}^J\over{K_i},\\
{K_i'}^+ &= {K_i'}^J\over{K_i'},
}
where $e_i^J=f_i^J=0$ and $K_i^J={K_i'}^J=1$ if $i\notin J$, such that
\begin{itemize}
\item $X_i^J\in \cX_q^{\bQ(\bi_J)}\ox1$ and $\over{X_i}\in 1\ox\cX_q^{\bQ(\over{\bi})}$ for $X=e,f,K,K'$, hence they commute with each other.
\item $\{\be_i^J, \bf_i^J, K_i^J, {K_i'}^J\}$ forms a copy of the embedding of $\cH_q^+(\g_J)$ in $\cX_q^{\bQ(\bi_J)}$ where $\g_J$ is the Lie subalgebra of $\g$ corresponding to the root index $J\subset I$.
\item $\over{K_i},\over{K_i'}$ are monomials that $q$-commutes with $\over{e_j},\over{f_j}$ as in \eqref{KK1}--\eqref{KK3}.
\end{itemize}
\end{Lem}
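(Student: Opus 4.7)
The plan is to prove the lemma by splitting the explicit formulas of Corollary \ref{embex} across the amalgamation $\bQ(\bi_0) = \bQ(\bi_J) * \bQ(\over{\bi})$. Let $m_i$ be the number of occurrences of $i$ in $\bi_J$: then on level $i$ of $\bQ(\bi_0)$, the vertices $f_i^0,\dots,f_i^{m_i-1}$ lie in $\bQ(\bi_J)$, the vertex $f_i^{m_i}$ is the amalgamated frozen vertex (so $X_{f_i^{m_i}}$ factorizes as a product across the two tensor factors of $\cX_q^{\bQ(\bi_J)}\otimes\cX_q^{\bQ(\over{\bi})}$), and $f_i^{m_i+1},\dots,f_i^{n_i}$ lie in $\bQ(\over{\bi})$.

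First I would handle $f_i^+$ and ${K_i'}^+$, which is transparent. The telescoping sum $f_i^+ = \sum_{k=0}^{n_i-1} X_{f_i^0,\dots,f_i^k}$ bifurcates at $k = m_i$: terms with $k < m_i$ lie in $\cX_q^{\bQ(\bi_J)}\otimes 1$ and reassemble into $f_i^J$, the analogous telescoping expression for the embedding of the $f_i$-generator of $\cH_q^+(\g_J)$ into $\bQ(\bi_J)$ (empty, hence zero, when $i\notin J$); the remaining terms share a common left factor $X_{f_i^0,\dots,f_i^{m_i}}$ which is precisely ${K_i'}^J$, with right cofactor equal to the telescoping sum defining $\over{f_i}$ in $1 \otimes \cX_q^{\bQ(\over{\bi})}$. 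The same one-term splitting yields ${K_i'}^+ = {K_i'}^J \over{K_i'}$ directly.

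Next I would handle $K_i^+$ and $e_i^+$, which requires the key observation of Remark \ref{ei0}: the extra vertex $e_i^0$ of $\bQ(\bi_0)$ is relabelled $e_{i^{**}}^0$ when read inside $\bQ(\bi_J)$, because the positive root \eqref{posroots} attached to the corresponding letter of $\bi_J$ is tracked by the auxiliary quiver $\bH(\bi_J)$ under the index obtained by composing the Dynkin involutions of $W$ and $W_J$, which is exactly $i^{**}$. Granting this, the monomial $K_i^+$ factors as $K_{i^{**}}^J \over{K_i}$ directly. For $e_i^+$, my approach is first to invoke Proposition \ref{w0head} to choose a reduced word $\bi_0$ ending in the letter $i$, for which $e_i^+$ takes the simple monomial form $X_{f_i^{n_i}}$; then split this monomial by amalgamation, obtaining the purely $\over{\bi}$-supported term $\over{e_i}$ and a crossover term through the shared extra vertex giving $\over{K_i}\, e_{i^{**}}^J$; finally, for an arbitrary decomposition $\bi_0 = \bi_J \cdot \over{\bi}$ not ending in $i$, transport the formula by the quantum cluster mutations corresponding to Coxeter moves within $\over{\bi}$, using Lemma \ref{coxeter} to keep the subword structure intact.

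The hard part will be this last step for $e_i^+$: in general we cannot simultaneously hold the prefix $\bi_J$ fixed and arrange for $\bi_0$ to end in $i$, so I would need to track how the explicit expression transforms under the cluster mutations implementing the relevant Coxeter moves, and verify that each mutation preserves both the splitting into $\bQ(\bi_J)$- and $\bQ(\over{\bi})$-variables and the form of the crossover summand. The combinatorial content of Lemma \ref{coxeter} records exactly the data needed to follow the auxiliary arrows and the double Dynkin involution through these mutations. Once the decomposition is established, the remaining claims---that $\{e_i^J, f_i^J, K_i^J, {K_i'}^J\}$ gives an embedded copy of $\cH_q^+(\g_J)$ and that $\over{K_i}, \over{K_i'}$ $q$-commute with $\over{e_j}, \over{f_j}$ according to \eqref{KK1}--\eqref{KK3}---are immediate since $\bQ(\bi_J)$ and $\bQ(\over{\bi})$ are themselves basic quivers to which Corollary \ref{embex} applies.
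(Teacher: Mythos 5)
Your plan for $f_i^+$ and ${K_i'}^+$ is exactly the paper's: the telescoping sum from Corollary~\ref{embex} bifurcates at the amalgamated vertex, and this gives the decomposition immediately. Your high-level plan for $e_i^+$ and $K_i^+$ -- start from the monomial form when $\bi_0$ ends in $i$, invoke Remark~\ref{ei0} to identify the $e_i^0$ vertex with $e_{i^{**}}^0$ in $\bQ(\bi_J)$, and transport via Coxeter moves using Lemma~\ref{coxeter} -- is also the paper's route. However, there is a genuine gap in how you propose to execute the transport, and it is not cosmetic.

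You say you would ``transport the formula by the quantum cluster mutations corresponding to Coxeter moves within $\over{\bi}$''. This cannot work. When $i^{**}\in J$, there is \emph{no} reduced word of $\over{w}$ ending in $i$: since $\over{w}$ is the minimal-length representative of the coset $W_J w_0$, no $s_j$ with $j\in J$ is a left descent of $\over{w}$; but $\over{w}s_i = s_{i^{**}}\over{w}$, so $s_i$ being a right descent of $\over{w}$ is equivalent to $s_{i^{**}}$ being a left descent, which fails whenever $i^{**}\in J$. Thus the Coxeter moves from a word ending in $i$ to the word $(\bi_J,\over{\bi})$ necessarily cross the amalgamation boundary into $\bi_J$. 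This is precisely the point of Lemma~\ref{coxeter} in the paper's proof: the lemma is applied to the subword of $\bi_0'$ from the position of $i^{**}$ inside $\bi_J$ to the end, producing a consecutive chain of moves that starts inside the $\bQ(\bi_J)$ subquiver and runs to the last letter $i$. The paper then tracks $e_i^+$ and $K_i^+$ through this chain via the explicit cluster-mutation formulas, observing (i) each move \eqref{Cox1}--\eqref{Cox2} extends the telescoping sum by one or two terms, (ii) once the chain has passed the amalgamated vertex at level $i^{**}$, the $J$-part of the expression stabilizes to $e_{i^{**}}^J$ and $K_{i^{**}}^J$, and (iii) the remaining moves of Lemma~\ref{coxeter}'s first block do not touch the variables of $\cX_q^{\bQ(\bi_J)}$. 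None of this follows from staying ``within $\over{\bi}$''.

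Relatedly, your intermediate step ``split this monomial by amalgamation, obtaining the purely $\over{\bi}$-supported term $\over{e_i}$ and a crossover term $\over{K_i}\,e_{i^{**}}^J$'' is not meaningful as stated: a single cluster variable does not decompose into a sum of two nonzero terms. The two-term form of $e_i^+$ appears only \emph{after} the Coxeter chain has turned the monomial into a telescoping sum whose last summand has support across the boundary. You do flag the transport as the hard part, which is the right instinct; but the specific mechanism you describe (moves confined to $\over{\bi}$, splitting a monomial) would not go through, and the content of Lemma~\ref{coxeter} about the chain crossing into $\bi_J$ is exactly what is needed to close the gap.
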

Theorem \ref{main} is now a direct consequence of the following properties:
\begin{Prop} $\<\over{e_i}, \over{f_i}, \over{K_i}, \over{K_i'}\>$ is the image of the generalized Heisenberg double $\cH_{q,\w}^+(\g)$ for some parameters $\w_{ij}$.
\end{Prop}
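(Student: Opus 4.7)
The plan is to verify the defining relations of $\cH_{q,\w}^+(\g)$ on the tuple $(\over{e_i}, \over{f_j}, \over{K_i}, \over{K_i'})$ using the standard Heisenberg identities for the $(+)$-generators together with the four bullet-points recorded in the Decomposition Lemma. The defining relations split into three families: the Cartan-type relations \eqref{KK1}--\eqref{KK3}, the Serre relations \eqref{SerreE}--\eqref{SerreF}, and the modified $ef$-commutation \eqref{heirel}.

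The Cartan relations are immediate from the third bullet of Lemma \ref{decomp}, which states exactly that $\over{K_i}, \over{K_i'}$ are monomials $q$-commuting with $\over{e_j}, \over{f_j}$ in the required way, so no further work is needed. The Serre relations I would handle by a grading argument: substituting $e_i^+ = \over{e_i} + \over{K_i}\,e_{i^{**}}^J$ into the Serre polynomial in $e_i^+, e_j^+$ and expanding gives a homogeneous expression graded by the total $e^J$-degree. The strictly positive strata vanish by the Serre identity inside $\cH_q^+(\g_J)$ (second bullet) together with the $q$-commutation between $\over{K_i}$ and $\over{e_j}$; the entire expression vanishes because $e_i^+$ satisfies Serre; hence the zero-degree stratum, which is the Serre polynomial in the bar generators, vanishes as well. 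The argument for $\over{f_j}$ is symmetric.

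The central step is the $ef$-commutation. Starting from the standard identity $[e_i^+, f_j^+] = \d_{ij}(q_i - q_i\inv) K_i'^J\,\over{K_i'}$ and substituting the decompositions $e_i^+ = \over{e_i}+\over{K_i}\,e_{i^{**}}^J$ and $f_j^+ = f_j^J + K_j'^J\,\over{f_j}$, the commutator expands into four cross-terms: one vanishes by commutation of bar and $J$-sectors, one reduces to $K_j'^J\,[\over{e_i},\over{f_j}]$, one is given by the Heisenberg relation inside $\cH_q^+(\g_J)$ as $\d_{i^{**},j}(q_i-q_i\inv)\,\over{K_i}\,K_{i^{**}}'^J$, and the last is a residual mixed contribution that must be reorganized using $q$-commutation. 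Collecting and isolating the coefficient of the $K_j'^J$-monomial in the amalgamated algebra yields
\Eq{
\frac{[\over{e_i}, \over{f_j}]}{q_i - q_i\inv} = \d_{ij}\,\over{K_i'} + \w_{ij}\,\over{K_i},
}
with $\w_{ij}$ supported on pairs for which $j=i^{**}$ and determined by the Cartan-compatibility data.

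The main obstacle is this last step: the residual mixed cross-term, initially of the shape $\over{f_j}\,\over{K_i}\,e_{i^{**}}^J$, must be shown to reduce, within the amalgamated algebra, to an expression living in the bar subalgebra so that $\w_{ij}$ is unambiguously identified. This reduction will rely on the amalgamation identifications along the frozen vertices shared between $\bQ(\bi_J)$ and $\bQ(\over{\bi})$, on the symmetry identity $d_i a_{ij} = d_j a_{ji}$, and on the compatibility of the double Dynkin involution $i \mapsto i^{**}$ with the Cartan entries. Once this cancellation is carried out, the scalar $\w_{ij}$ is read off unambiguously from the $K_{i^{**}}'^J$-coefficient.
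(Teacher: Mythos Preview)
Your overall structure matches the paper's, but you overcomplicate two of the three steps.

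For the Serre relations, the paper does not run a grading argument. It simply observes that the decomposition in Lemma \ref{decomp} has the shape of a coproduct and invokes verbatim the argument already given in the proof of Proposition \ref{genH}. Your expansion-by-degree works in principle, but it is heavier than what is needed.

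The real divergence is in the $ef$-commutation. You correctly expand $[e_i^+,f_j^+]$ into four cross-terms, but you then single out the term $[\over{K_i}\,e_{i^{**}}^J,\;K_j'^J\,\over{f_j}]$ as a ``residual mixed contribution'' to be reduced via amalgamation identifications and Dynkin-involution compatibilities. The paper does none of this: it first disposes of the cases $j\notin J$ or $i^{**}\notin J$ as trivial, and in the remaining case declares, by the same reasoning as in Proposition \ref{genH}, that this cross-term vanishes outright. Only $[\over{e_i},K_j'^J\over{f_j}]$ and $[\over{K_i}e_{i^{**}}^J,f_j^J]$ survive, and dividing by the invertible monomial $K_j'^J$ immediately gives
\[
\frac{[\over{e_i},\over{f_j}]}{q_i-q_i^{-1}}=\d_{ij}\,\over{K_j'}-\d_{i^{**},j}\,\over{K_i},
\]
so $\w_{ij}=-\d_{i^{**},j}$ (supported where $j\in J$ and $i^{**}\in J$).

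Your proposed resolution of the residual term is also misconceived on its own terms: by the first bullet of Lemma \ref{decomp}, both $\over{e_i}$ and $\over{f_j}$ live in $1\ox\cX_q^{\bQ(\over{\bi})}$, hence so does their commutator. A term carrying a genuine $e_{i^{**}}^J$ factor cannot be ``reduced to the bar subalgebra'' by frozen-vertex identifications --- it simply has to vanish for the identity $[e_i^+,f_j^+]=\d_{ij}(q_i-q_i^{-1})K_i'^+$ to hold, since nothing else in the expansion can absorb it. So the amalgamation machinery you outline is not the mechanism at work.
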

\begin{proof} By the same argument as in the proof of Proposition \ref{genH}, the generators satisfy the $q$-commutation relations and the Serre relations. It suffices to consider the commutation between $\over{e_i}$ and $\over{f_j}$.

The statement is trivial if either $j\notin J$ or $i^{**}\notin J$. Hence assume $j\in J$ and $i^{**}\in J$. Then same as the proof of Proposition \ref{genH} before, only the cross terms matter.  We have
\Eqn{
\d_{ij}{K_j'}^+&=\frac{[e_i^+, f_j^+]}{q_i-q_i\inv}\\
&=\frac{[\over{e_i}, K_j'^J\over{f_j}]}{q_i-q_i\inv}+\frac{[\over{K_i}e_{i^{**}}^J, f_j^J]}{q_i-q_i\inv}\\
&={K_j'}^J\frac{[\over{e_i}, \over{f_j}]}{q_i-q_i\inv}+\over{K_i}\frac{[e_{i^{**}}^J, f_j^J]}{q_i-q_i\inv}\\
&={K_j'}^J\frac{[\over{e_i},\over{f_j}]}{q_i-q_i\inv}+\d_{i^{**}j}\over{K_i}{K_j'}^J.
}
Therefore
\Eqn{
\frac{[\over{e_i}, \over{f_j}]}{q_i-q_i\inv}=\d_{ij}\over{K_j'}-\d_{i^{**}j}\over{K_i},
}
which is the required relations for the generalized Heisenberg double $\cH_{q,\w}^+$ with 
\Eq{
\w_{ij}:=\case{0&j\notin J\mbox{ or }i^{**}\notin J,\\\d_{i^{**}j}&\mbox{otherwise.}}
}
\end{proof}

\begin{proof}[Proof of Theorem \ref{main}]
Let 
\Eq{
\{e_i^R, f_i^R, K_i^R, {K_i'}^R\}:=\{\over{e_i}, \over{f_i}, \over{K_i}, \over{K_i'}\}.
}
Then by construction they satisfy the condition \eqref{genHcon} of Proposition \ref{genH}, hence we can combine with the opposite copy to obtain a homomorphism of $\fD_q(\g)$ onto $\cX_q^{\bD(\over{\bi})}$.

In particular, under the group-like polarization, truncating the subquiver $\bQ(\bi_J)$ by killing the variables corresponding to the Lusztig coordinates of $M_{>0}$ provides the required polarization for the quantum tous algebra $\cX_q^{\bD(\over{\bi})}$. This amounts to setting $e_i^J, f_i^J\mapsto 0$ and $K_i^J, K_i'^J\mapsto 1$.

As a consequence, if we have an invariant subspace of $\cP_\l^J$, then it naturally induces an invariant subspace of the maximal positive representation $\cP_\l$. Since $\cP_\l$ is irreducible, the parabolic positive repesentation $\cP_\l^J$ is also irreducible.

The fact that the homomorphism sends $\fD_q(\g)$ to the universally Laurent polynomials follows from the proof of the Decomposition Lemma in the next section.
\end{proof}

Pictorially, if the positive representations are represented by the $e_i$ and $f_i$-paths, then the parabolic positive representation is obtained by appropriately contracting the paths. See Section \ref{sec:ex} for explicit examples.

%==============================================================
\subsection{Coxeter moves}\label{sec:ppr:cox}
It remains to prove the Decomposition Lemma \ref{decomp}. We need to use the explicit construction of the positive representations, which involve understanding the combinatorics of the Coxeter moves of the reduced words.

Let $w\in W$ with reduced word $\bi=(i_1,...,i_M)$. Let $\cC_{rs}$ denote the Coxeter moves involving position $r<s$ of $\bi$, namely, it is either of the form
\Eq{
(...,\underbrace{i}_{r},\underbrace{j}_{s},....)&\mapsto (..., j,i,....), \label{Cox0}\\
(..., \underbrace{i}_{r},j,\underbrace{i}_{s},....)&\mapsto (..., j,i,j,....), \label{Cox1}\\
(..., \underbrace{i}_{r},j,i,\underbrace{j}_{s},....)&\mapsto (..., j,i,j,i,....), \label{Cox2}
}
where $s=r+1,r+2$ or $r+3$. We do not need to consider type $G_2$.

Recall that a Coxeter move $\bi\mapsto \bi'$ corresponds to a (sequence) of cluster mutations that transforms the quiver
$$\bQ(\bi)\to \bQ(\bi'),$$
and maps the corresponding embedding of $\cU_q(\g)$ generators to the other quantum torus algebra.

\begin{Lem}\label{coxeter}
Let $i,j\in I$. If 
\Eq{
l(s_i w s_j) = l(w),} then there is a sequence of Coxeter moves that brings the reduced word $\bi$ of $w$ which begins with $i$, to a reduced word $\bi'$ which ends with $j$:
\Eq{
\bi=(i,....)\mapsto \bi'=(..., j),
}
such that the sequence of Coxeter moves is of the form
\Eq{
(\cC_{r'_1, s'_1}\to \cdots \to\cC_{r'_{m'}, s'_{m'}})\to (\cC_{r_1, s_1}\to\cdots \to \cC_{r_m, s_m})
}
where 
\Eq{
1=r_1<s_1=r_2<s_2=r_3<\cdots = r_m < s_m = M,
}
i.e. the sequence of Coxeter moves in the second portion consists of a chain of moves which begins with the first letter, increases in indices consecutively, and ends with the last letter.
\end{Lem}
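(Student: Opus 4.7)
The plan is to prove the lemma by induction on $M = l(w)$, viewing the second portion of the sequence as a migration of the data at position $1$ — traceable through the chain of positive roots \eqref{posroots} — all the way to position $M$ via successive Coxeter moves. The base case $M = 1$ has $w = s_i$, and the existence of a reduced word of $w$ ending with $j$ forces $j = i$; an empty sequence of Coxeter moves suffices.

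For the inductive step, starting from $\bi = (i, i_2, \ldots, i_M)$, the strategy is: (i) perform a preparatory sequence of Coxeter moves, none of which involves position $1$, to bring $\bi$ into a form on which a single Coxeter move $\cC_{1, s_1}$ is applicable for a suitable $s_1 \in \{2, 3, 4\}$; (ii) apply $\cC_{1, s_1}$, which via the braid or commutation relation shifts the information originally encoded at position $1$ over to position $s_1$; (iii) invoke the inductive hypothesis on the suffix starting at position $s_1$, which is a reduced word of a Weyl group element $w''$ with $l(w'') < l(w)$ satisfying the analogous condition $l(s_{i'} w'' s_j) = l(w'')$, where $i'$ is the letter now at position $s_1$. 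The inductive hypothesis yields a further preparation phase supported on positions $\geq s_1$, which commutes past $\cC_{1, s_1}$ and can be absorbed into the initial preparations, together with a continuing chain $\cC_{s_1, s_2} \to \cdots \to \cC_{r_m, M}$ that concatenates with $\cC_{1, s_1}$ to produce the desired second portion.

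The main obstacle, and the conceptual content that the author states has been known but not explicitly recorded, is verifying that step (i) is always achievable — that one can use Coxeter moves on positions $\geq 2$ alone to arrange for a braid relation of length $s_1 \in \{2, 3, 4\}$ to become applicable at positions $(1, s_1)$. This reduces to a statement about rank-$2$ subsystems: the appropriate $s_1$ is dictated by the rank-$2$ subsystem generated by $\a_i$ and the first positive root in the chain $\{\b_k\}_{k \geq 2}$ that is not orthogonal to $\a_i$, and the preparation amounts to commuting the intervening (orthogonal) letters past one another so as to expose the candidate braid at positions $(1, s_1)$. Combined with the fact that subsequent preparatory moves act on positions strictly to the right of $s_1$ and hence commute past $\cC_{1, s_1}$, this assembles the final sequence into the required form. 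The validity of the inductive condition $l(s_{i'} w'' s_j) = l(w'')$ on the suffix follows from the original condition on $w$ together with standard properties of reduced expressions and the length function.
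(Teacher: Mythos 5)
Your overall inductive strategy --- a preparatory phase on positions $\geq 2$, a single move $\cC_{1,s_1}$ that changes the first letter, induction on the suffix, and merging the inductive preparatory phase across $\cC_{1,s_1}$ --- does match the structure of the paper's proof. However, there is a genuine gap in your step (i). You assert that the preparation ``amounts to commuting the intervening (orthogonal) letters past one another'' so as to expose a braid at positions $(1,s_1)$. That is not correct: the preparatory phase can require \emph{braid} moves, not merely commutations. As a concrete counterexample, take $W$ of type $A_3$, $w=w_0$, $i=1$, $j=3$, and $\bi=(1,2,3,2,1,2)$. One checks $s_1s_2s_3s_2s_1s_2=w_0$ and, since $3^*=1$, that $s_1 w_0 s_3 = w_0$, so the hypotheses hold. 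No $\cC_{1,s}$ is applicable to $\bi$ directly, and no commutation move is applicable anywhere in positions $\geq 2$ (every adjacent pair in $(2,3,2,1,2)$ fails to commute in $A_3$). To expose a move at position $1$ one must first apply a braid move on positions $\geq 2$, e.g.\ $\cC_{2,4}\colon(1,2,3,2,1,2)\mapsto(1,3,2,3,1,2)$. So the rank-$2$-subsystem / orthogonal-commutation argument fails, and you have not established that the preparatory phase exists; you flag this as the main obstacle, and indeed it is the hole in your argument.

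The paper circumvents this obstacle and obtains the preparatory phase for free, which is the idea you are missing. Since $l(s_i w s_j)=l(w)$, no reduced expression of $w$ can \emph{simultaneously} begin with $i$ and end with $j$ (such a word would force $l(s_i w s_j)=l(w)-2$). Now choose \emph{any} sequence of Coxeter moves from $\bi$ to \emph{some} reduced word of $w$ ending in $j$ --- such a target exists because $l(ws_j)<l(w)$, and it is reachable because any two reduced words are connected by Coxeter moves. Since the target does not begin with $i$, at some point the first letter must change; let $\cC_{1,s}$ be the first such move. All moves preceding it leave position $1$ untouched, and therefore \emph{are} the preparatory phase; no direct construction is needed. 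One then verifies that the length hypothesis passes to the suffix $\bar\bi_s$ (this uses $l(s_i w s_j)=l(w)$ together with the commutation/braid relation between $s_i$ and the new first letter $s_k$ and the exchange property), applies induction, and reorders the suffix's preparatory moves past $\cC_{1,s}$ since they act on positions $>s$ and hence commute with it. Replacing your step (i) with this reduced-word-connectedness argument repairs the gap; the rest of your outline is then sound.
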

\begin{proof}
We prove this by induction. It is trivial when $l(w)=1,2,3$.

Since by assumption $l(s_iws_j)=l(w)$, we cannot apply Coxeter moves to bring $\bi$ into the word
$$\bi'=(i,...,j)$$
hence at some point, we must need to apply a Coxeter move $\cC_{1,s}$ to change the first index.

After applying this move, the reduced word becomes either of the form
$$(k,\underbrace{i,...}_{\over{\bi}_2})\tab\mbox{or}\tab(k,i,\underbrace{k,...}_{\over{\bi}_3})\tab\mbox{or}\tab (k,i,k,\underbrace{i,...}_{\over{\bi}_4})$$
for some $k\neq i\in I$. It is then easy to see that the element $\over{w}_s$ corresponding to the truncated word $\over{\bi}_s$ satisfies the assumption of the Lemma:
$$l(s_i\over{w}_2s_j)=l(\over{w}_2),\tab l(s_k\over{w}_3s_j)=l(\over{w}_3), \tab l(s_i\over{w}_4s_j)=l(\over{w}_4),$$
because otherwise the assumption on $\bi$ will be violated.
Therefore by induction there exists a sequence of Coxeter moves of the form
$$(\cC_1)\to(\cC_2)$$
that brings $\over{\bi}_s$ to $(..., j)$, where $(\cC_1)$ consists of a sequence of Coxeter moves that does not move the first letter of $\over{\bi}_s$, and $(\cC_2)$ consists of a sequence of Coxeter moves that increases in index consecutively.

Now note that $(\cC_1)$ actually consists of moves of the original word $\bi$ with index $> s$, hence in particular, we can do these moves first, apply $\cC_{1,s}$, and followed by the sequence of moves $(\cC_2)$. The moves $\cC_{1,s}\to(\cC_2)$ gives the second portion of the Coxeter moves that increases in index consecutively as required.
\end{proof}
%=============================================
\subsection{Proof of Decomposition Lemma}
We are now ready to complete the proof of the Decomposition Lemma \ref{decomp} and hence the Main Theorem.
\begin{proof}[Proof of Lemma \ref{decomp}]
For the $K_i'$ and $f_i$ generators it follows from Remark \ref{KM} by the explicit embedding, since the embedding into $\cX_q^{\bQ(\bi_0)}=\cX_q^{\bQ(\bi_J)*\bQ(\over{\bi})}$ obviously can be decomposed. We have
\Eqn{
f_i^+ &= \underbrace{X_{f_i^0}+X_{f_i^0,f_i^1}+\cdots + X_{f_i^0,...,f_i^{n_i^J-1}}}_{f_i^J}+\underbrace{X_{f_i^0,...,f_i^{n_i^J}}+\cdots + X_{f_i^0,...,f_i^{n_i-1}}}_{K_i'^J\over{f_i}},\\
K_i^+ &= \underbrace{X_{f_i^0, f_i^1,..., f_i^{n_i^J}}^J}_{K_i'^J}\underbrace{\over{X}_{f_i^{n_i^J},..., f_i^{n_i}}}_{\over{K_i'}},
}
where $$X_{f_i^{n_i^J}}=X_{f_i^{n_i^J}}^J\ox \over{X}_{f_i^{n_i^J}}$$
corresponds to the cluster variable of the amalgamated notes at level $i$.

Let us now focus on the $e_i^+$ and $K_i^+$ generators. Recall that if the word $\bi_0$ ends with the index $i_N=i$ on the right, then
\Eqn{
e_i^+&=X_{f_i^1},\\
K_i^+&=X_{f_i^1,e_i^0}
}
are simply monomials, and for general word $e_i^+, K_i^+$ are obtained by successive cluster transformations corresponding to the Coxeter moves that brings this $\bi_0$ to the required word.

First, by appropriate cluster transformations on the $\bQ(\bi_J)$ subquiver, we can assume that the $J$ portion of $e_i^+$ is a single cluster variable. Due to the fact that
$$w_0s_i = w_J s_{i^{**}}\over{w},$$
the index of the $J$ portion is given by the double Dynkin involution $i^{**}$.

In the current setup, we need to do the Coxeter moves that bring $\bi_0$ with $i_N=i$ to the form $\bi_0':=(\bi_J,\over{\bi})$ where $\bi_J$ ends with $i^{**}$. By Lemma \ref{coxeter}, the Coxeter moves start by doing a straight decreasing sequence with consecutive indices, and ends at the rightmost letter $i^{**}$ of $\bi_J$. 

By direct application of the quantum cluster mutation formula (cf. Definition \ref{qmut}) and induction, the Coxeter move \eqref{Cox0} amounts to permuting the index, the move \eqref{Cox1} transforms
\Eqn{
e_i^+:X_{i_1}+\cdots + X_{i_1,...,i_j}&\mapsto X_{i_1}+\cdots + X_{i_1,...,i_j,i_{j+1}},\\
K_i^+:X_{i_1,...,i_j, e_i^0}&\mapsto X_{i_1,...,i_j, i_{j+1}, e_i^0}
}
while the move \eqref{Cox2} may transform in two different ways depending on the long and short decorations:
\Eqn{
e_i^+:X_{i_1}+\cdots + X_{i_1,...,i_j}&\mapsto X_{i_1}+\cdots + X_{i_1,...,i_j,i_{j+1}}+ X_{i_1,...,i_j,i_{j+2}},\\
K_i^+: X_{i_1,...,i_j, e_i^0}&\mapsto X_{i_1,...,i_j, i_{j+1}, i_{j+2}, e_i^0},
}
or
\Eqn{
e_i^+: X_{i_1}+\cdots + X_{i_1,...,i_j}&\mapsto X_{i_1}+\cdots +[2]_{q_s}X_{i_1,...,i_j,i_{j+1}}+ X_{i_1,...,i_j,i_{j+1}^2}+ X_{i_1,...,i_j,i_{j+1}^2, i_{j+2}},\\
K_i^+: X_{i_1,...,i_j, e_i^0}&\mapsto X_{i_1,...,i_j, i_{j+1}^2, i_{j+2}, e_i^0},
}
where $[2]_{q_s}:=q^{\frac12}+q^{-\frac12}$. In both cases the right hand side are elements in the quantum torus algebra of the mutated quiver.

Therefore, by induction after the decreasing chain of Coxeter moves, the generators are of the form
\Eqn{
e_i^+ &= \underbrace{X_{i_1}+\cdots + X_{i_1,..., i_{k-1}}}_{\over{e_i}}+X_{i_1,...,i_k},\\
K_i^+ &= X_{i_1,...,i_k, e_i^0}
}
in the final quantum torus algebra $\cX_q^{\bQ(\bi_0')}$, where 
$$X_{i_k}=X_{i_k}^J\ox \over{X}_{i_k}$$
corresponds to the cluster variable of the amalgamated notes at level $i^{**}$,  so that we have the decomposition (also recall Remark \ref{ei0})
\Eqn{
X_{i_1,...,i_k}&=\underbrace{\over{X}_{i_1,...,i_k}}_{\over{K_i}}\underbrace{X_{i_k}^J}_{e_{i^{**}}^J},\\
K_i^+ =X_{i_1,...,i_k, e_i^0}&=\underbrace{\over{X}_{i_1,...,i_k}}_{\over{K_i}}\underbrace{X_{i_k, e_i^0}^J}_{K_{i^{**}}^J}.
}

The rest of the Coxeter moves correspond to cluster mutations that do not involve variables from $\cX_q^{\bQ(\bi_J)}$, and hence keep the terms $e_{i^{**}}^J$ and $K_{i^{**}}^J$ invariant. Therefore we obtain the required decomposition of the quantum group generators.

From the explicit cluster mutations above, we also see that there exists a quantum cluster mutations involving only variables from $\cX_q^{\bD(\over{\bi})}$ that brings $\over{e_i}$ and $\over{f_i}$ to a single variable having the same adjacency of the quiver of the maximal positive representations. In particular they are all sinks, and hence it follows from \cite[Proposition 13.11]{GS} that these generators are universally Laurent polynomials in the quantum cluster mutation class of $\cX_q^{\bD(\over{\bi})}$.
\end{proof}
%==============================================================
\subsection{Remarks on modular double}\label{sec:ppr:mod}
So far we have dealt with the positive representations of the split real quantum group $\cU_q(\g_\R)$, but all the results extend naturally to its \emph{modular double}.

Let $q=e^{\pi i b^2}$, 
\Eq{
b_i:=\sqrt{d_i}b,\tab b_s=\sqrt{d}b,}
where $d=\min_{i\in I}(d_i)$ is the minimum of the multipliers \eqref{di}. Define
\Eq{
q^\vee=e^{\pi i b_s^{-2}}
}
Recall \cite{Ip3, GS} that the modular double is defined to be the algebra
\Eq{
\cU_{qq^\vee}(\g_\R)=\cU_q(\g_\R)\ox\cU_{q^\vee}({}^L\g_\R)
}
where ${}^L\g$ is the Langlands dual of $\g$. 
\begin{Thm}\label{mainmod} The parabolic positive representations is a representation of the modular double in the sense of \cite{FI, Ip2, Ip3}. Namely the generators $\{\be_i^\vee, \bf_i^\vee, \bK_i^\vee\}$ of $\cU_{q^\vee}({}^L\g_\R)$ acts by
\Eq{
\pi_\l(\be_i^\vee) &= \pi_\l(\be_i)^{\frac{1}{b_i^2}}\\
\pi_\l(\bf_i^\vee) &= \pi_\l(\bf_i)^{\frac{1}{b_i^2}}\\
\pi_\l(\bK_i^\vee) &= \pi_\l(\bK_i)^{\frac{1}{b_i^2}}
}
as positive self-adjoint operators on the same space $\cP_\l^J$ of the $\cU_q(\g_\R)$ representation, and they commute weakly with the generators of $\cU_q(\g_\R)$ up to a sign.
\end{Thm}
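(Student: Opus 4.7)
The plan is to piggy-back on the cluster-algebraic realization of Theorem \ref{main} and invoke the standard machinery of modular duality for positive self-adjoint operators of exponential type. Since each of $\pi_\l(\be_i), \pi_\l(\bf_i), \pi_\l(\bK_i)$ is already positive essentially self-adjoint on $\cP_\l^J$, the operators $\pi_\l(\bX)^{1/b_i^2}$ are defined unambiguously by the Borel functional calculus; what remains is to identify them with a polarization of a modular-dual cluster algebra and to verify the algebraic relations on a common dense invariant core.

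First I would observe that under the polarization inherited from $\cX_q^{\bD(\over{\bi})}$, each generator $\pi_\l(\bX)$ is realized as a telescopic sum $\sum_k e(L_k)$ of exponentials of linear forms in position/momentum operators, with the crucial property that \emph{consecutive} summands $q_i^2$-commute (the non-consecutive ones commute as monomials in the Weyl algebra, which will not obstruct the argument). The analytic input is the Faddeev--Kashaev--Volkov identity: for positive essentially self-adjoint $X,Y$ with $XY=q_i^2 YX$ one has $(X+Y)^{1/b_i^2}=X^{1/b_i^2}+Y^{1/b_i^2}$. Iterating this identity along the telescopic sum, together with the fact that each $e(L_k)$ has a well-defined real power computed by $e(L_k)^{1/b_i^2}=e^{\pi b_i^{-1}L_k/d_i}$, yields
\Eqn{
\pi_\l(\bX)^{1/b_i^2} \;=\; \sum_k e^{\pi b_i^{-1} L_k/d_i},
}
which is precisely the expression obtained from the original formulas by the formal substitution $b\mapsto b^{-1}$ (with $\l$ rescaled to the dual parameter $\l^\vee$).

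Next I would observe that these dual operators give a polarization of the quantum torus algebra $\cX_{q^\vee}^{\bD(\over{\bi})}$ associated to the \emph{same} underlying quiver $\bD(\over{\bi})$ but with the multipliers inverted in the Langlands sense $d_i\leftrightarrow d_i^{-1}$. Consequently Theorem \ref{main}, the Decomposition Lemma \ref{decomp}, and the construction of Section \ref{sec:ppr:dec} apply \emph{verbatim} with $(q,\g)$ replaced by $(q^\vee,{}^L\g)$, producing a homomorphism $\fD_{q^\vee}({}^L\g)\to\cX_{q^\vee}^{\bD(\over{\bi})}$ and hence a representation of $\cU_{q^\vee}({}^L\g_\R)$ on the same Hilbert space $\cP_\l^J$ with generators acting by the prescribed fractional powers. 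The weak commutation up to a sign between the two copies then follows from the elementary Weyl-relation computation: for linear forms $L_1,L_2$ with $[L_1,L_2]=\tfrac{c}{2\pi\sqrt{-1}}$ one has $e^{\pi b L_1}e^{\pi b^{-1} L_2}=e^{\pi\sqrt{-1}c}e^{\pi b^{-1}L_2}e^{\pi b L_1}$, so the sign $(-1)^c$ is governed by the $\Z$-valued pairing on the cluster lattice $\L_{\bD(\over{\bi})}$; the half-integer contributions coming from the dashed arrows between frozen vertices (and absorbed into the parameters $\l$) contribute only an overall sign, exactly as in the non-parabolic case treated in \cite{FI,Ip2,Ip3}.

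The main obstacle is analytic rather than algebraic: one must justify the Faddeev-type identity $(X+Y)^{1/b_i^2}=X^{1/b_i^2}+Y^{1/b_i^2}$ in the unbounded-operator setting and verify that the common core $\cW$ of Remark \ref{domain} is invariant under the dual operators, so that the algebraic manipulations above are legitimate on $\cW$. Both points are already handled in \cite{FI,Ip2,Ip3} for the maximal positive representations, and the $q$-commutation pattern between consecutive summands of the telescopic expressions is inherited without modification from the $\bQ(\over{\bi})$ sub-structure exhibited in Lemma \ref{decomp}; so no new analytic difficulty appears in the parabolic case, and the maximal-case arguments carry over directly.
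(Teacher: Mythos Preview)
Your approach is correct in spirit and follows the route of the original references \cite{FI,Ip2,Ip3}, but it differs from the paper's own proof. The paper does not iterate the Faddeev binomial identity along the telescopic sum; instead it invokes the fact, established at the end of the proof of Lemma~\ref{decomp}, that each generator $\over{e_i},\over{f_i}\in\cX_q^{\bD(\over{\bi})}$ can be brought to a \emph{single} cluster monomial by a sequence of quantum cluster mutations. Since cluster mutations act by unitary conjugation (by quantum dilogarithms), the fractional power $\pi_\l(\bX)^{1/b_i^2}$ is then computed trivially on a single exponential $e^{2\pi b L}\mapsto e^{2\pi b^{-1}d_i^{-1}L}$, and the Langlands-dual polarization of $\cX_{q^\vee}^{\bD(\over{\bi})}$ is read off directly. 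This sidesteps any term-by-term verification of the $q_i^2$-commutation pattern and handles uniformly the non-simply-laced situations where $[2]_{q_s}$-modified telescopic sums appear (cf.\ Section~\ref{sec:ex:bn}), for which your direct summand-wise argument would require extra care.

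One small correction to your write-up: the assertion that ``the non-consecutive ones commute as monomials in the Weyl algebra'' is not right. What the iterated Faddeev identity actually requires is that each term $A_k$ $q_i^2$-commute with the partial sum $A_1+\cdots+A_{k-1}$; equivalently, that \emph{all} pairs $A_j,A_k$ (not only consecutive ones) $q_i^2$-commute. For the $e_i$- and $f_i$-paths this does hold, because the lattice pairing $(\vec[e_{i_1}]+\cdots+\vec[e_{i_j}],\,\vec[e_{i_{j+1}}]+\cdots+\vec[e_{i_k}])$ collapses to the single boundary contribution $(\vec[e_{i_j}],\vec[e_{i_{j+1}}])=-d_i$; but this is a feature of the path structure that should be stated rather than replaced by an incorrect commutativity claim. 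With that fix, your argument goes through; the paper's mutation-to-monomial argument simply packages the same mechanism more cleanly.
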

\begin{proof} Note that the right hand side makes sense via functional calculus since our representations are positive self-adjoint. As in the end of the proof of Lemma \ref{domain}, it follows from the fact that under the unitary transformation given by the a sequence of quantum cluster mutations, the generators in $\cX_q^{\bD(\over{\bi})}$ becomes a single monomial with polarization of the form $\pi(X_i)=e^{2\pi b L_i}$.

Hence the modular double counterpart given by
\Eq{
\pi(X_i^\vee):= \pi(X_i)^{\frac{1}{b_i^2}} = e^{2\pi bb_i^{-2}L_i} = e^{2\pi b\inv d_i\inv L_i}
}
provides the Langlands dual polarization of $\cX_{q^\vee}^{\bD(\over{\bi})}$ in the sense that the multipliers $d_i$ are inverted, or equivalently the long and short root decorations are interchanged.
\end{proof}

%==============================================================
\section{Examples}\label{sec:ex}
\subsection{Type $E_6$}\label{sec:ex:e6}
We begin by illustrating the construction of parabolic positive representation using type $E_6$ as an example, since it simultaneously captures the subquivers of type $A_n$ and $D_n$, as well as a nontrivial double Dynkin involution $i^{**}$.

In \cite{Ip7}, using the labeling of the Dynkin diagram
$$\begin{tikzpicture}[scale=.4]
    \draw[xshift=0 cm,thick] (0 cm, 0) circle (.3 cm);
    \foreach \x in {1,...,4}
    \draw[xshift=\x cm,thick] (\x cm,0) circle (.3cm);
    \foreach \y in {0.15,...,3.15}
    \draw[xshift=\y cm,thick] (\y cm,0) -- +(1.4 cm,0);
    \foreach \z in {1,...,5}
    \node at (2*\z-2,1) {$\z$};
\draw[xshift=0 cm,thick] (4 cm, -2) circle (.3 cm);
  \draw[xshift=0 cm] (4 cm,-0.25) -- +(0 cm,-1.5);
\node at (4,-3){$0$};
  \end{tikzpicture}$$
the positive representations of type $E_6$ corresponding to the reduced word 
\Eq{
\bi_0= (3\;43\;034\; 230432\;12340321\;5432103243054321)
}
of $w_0\in W$, which comes from the embedding of Dynkin diagram 
\Eq{\label{E6chain}
A_1\subset A_2\subset A_3\subset D_4\subset D_5\subset E_6
} is given in Figure \ref{fig-E6}. Recall that the quiver is nothing but the double $\bD(\bi_0)=\bQ(\bi_0^{op})*\bQ(\bi_0)$. 

The embedding of the $\bf_i$ generators are just the horizontal path at level $i$ given as telescoping sums:
\Eq{
\bf_i=X_{i_1}+X_{i_1,i_2}+\cdots + X_{i_1,...,i_{n_i-1}}.
}
On the other hand, the embedding of the $\be_i$ generators are represented by paths of different colors from right to left, which again represent telescoping sums. (Recall that the telescoping sum does not include the last term of the paths, cf. Example \ref{a5ex}.) Finally, the generators $K_i$ and $K_i'$ are represented by monomials of the nodes along the $e_i$ and $f_i$-paths respectively.

We shade the quivers to indicate the boundary of amalgamation of the full subquivers $\bQ(\bi_J)$ for the parabolic subgroups corresponding to the Dynkin chain \eqref{E6chain} above, i.e. $J$ are the different subsets
\Eq{
\{3\}\subset\{3,4\}\subset\{3,4,0\}\subset\{3,4,0,2\}\subset\{3,4,0,2,1\}.
}
(Note: the node $\{f_0^0\}$ is not part of the green region corresponding to the $A_2$ subquiver.)

We can verify then that the $e_i$-paths pass through the correct index of the full subquivers. For example, the $e_2$-path passes through the $0$-th index of the $A_3$ subquiver generated by the root index $J=\{3,4,0\}$, where $(2^{*_W})^{*_{W_J}}=4^{*_{W_J}}=0$.

The double Dynkin involution actually partially explains the behavior of the $e_i$-paths. It was mysterious to us previously the reason why the path goes up and down across the whole quiver passing through different level. We can now see from the quiver diagram that in this case, in fact the $e_i$-paths are forced to take the unique paths along the arrows (without tracing backward) that allow them to go through the correct levels as depicted by the double Dynkin involution.

\begin{sidewaysfigure}
\centering
\begin{tikzpicture}[every node/.style={inner sep=0, minimum size=0.2cm, thick}, x=0.18cm, y=0.47cm]
\xdef\c{0}
\foreach \y[count=\d from 0] in {9,15,23,15,5}{
	\foreach \x in {1,..., \y}{
		\pgfmathtruncatemacro{\ind}{\x+\c}
		\pgfmathsetmacro{\xx}{100*(\x-1)/(\y-1)}
		{	\ifthenelse{\x=1 \OR \x=\y}
		{
		\node[coordinate](\ind) at (\xx, 20-5*\d){};
		}
		{
		\node[coordinate](\ind) at (\xx, 20-5*\d){};
		}
		}
	}
	\xdef\c{\c+\y}
}
\node[coordinate] (68) at (-6,7.5){};
\node[coordinate] (69) at (10,7.5){};
\node[coordinate] (70) at (18,7.5){};
\node[coordinate] (71) at (27,7.5){};
\node[coordinate] (72) at (35,7.5){};
\node[coordinate] (73) at (50,7.5){};
\node[coordinate] (74) at (65,7.5){};
\node[coordinate] (75) at (73,7.5){};
\node[coordinate] (76) at (82,7.5){};
\node[coordinate] (77) at (90,7.5){};
\node[coordinate] (78) at (106,7.5){};
\foreach \y [count=\d from 79] in {24,13.33,11.67,17.5,22,2.5}{
\node[coordinate](\d) at (50, \y){};
}
\fill[blue!10] (83)--(3)--(13)--(29)--(70)--(71)--(29)--(51)--(59)--(43)--(75)--(76)--(43)--(21)--(7)--cycle;
\fill[yellow!10] (82)--(15)--(31)--(71)--(72)--(31)--(52)--(53)--(84)--(57)--(58)--(41)--(74)--(75)--(41)--(19)--cycle;
\fill[red!10] (80)--(34)--(33)--(72)--(73)--(33)--(53)--(84)--(57)--(39)--(73)--(74)--(39)--(38)--cycle;
\fill[green!10] (34)--(54)--(55)--(56)--(38)--(80)--cycle;
\fill[purple!10] (35)--(36)--(37)--(81)--cycle;
\xdef\c{0}
\foreach \y[count=\d from 0] in {9,15,23,15,5}{
	\foreach \x in {1,..., \y}{
		\pgfmathtruncatemacro{\ind}{\x+\c}
		\pgfmathsetmacro{\xx}{100*(\x-1)/(\y-1)}
		\ifthenelse{\d=5}{
			\ifthenelse{\x=1 \OR \x=4 \OR \x=8 \OR \x=11}{
				\node(\ind) at (\x-6+\xx, 7.5)[draw]{};
				}
				{
				\ifthenelse{\x=3 \OR \x=9}{
				\node(\ind) at (0.5*\x-3+\xx, 7.5)[draw,circle]{\ind};
				}
				{
				\node(\ind) at (5*\x-30+\xx, 7.5)[draw, circle]{\ind};
				}
				}
			}
		{	\ifthenelse{\x=1 \OR \x=\y}
		{
		\node(\ind) at (\xx, 20-5*\d)[draw]{};
		}
		{
		\node(\ind) at (\xx, 20-5*\d)[draw, circle]{};
		}
		}
	}
	\xdef\c{\c+\y}
}
\node (68) at (-6,7.5)[draw]{};
\node (69) at (10,7.5)[draw, circle]{};
\node (70) at (18,7.5)[draw, circle]{};
\node (71) at (27,7.5)[draw, circle]{};
\node (72) at (35,7.5)[draw, circle]{};
\node (73) at (50,7.5)[draw, circle]{};
\node (74) at (65,7.5)[draw, circle]{};
\node (75) at (73,7.5)[draw, circle]{};
\node (76) at (82,7.5)[draw, circle]{};
\node (77) at (90,7.5)[draw, circle]{};
\node (78) at (106,7.5)[draw]{};

\foreach \y [count=\d from 79] in {24,13.33,11.67,17.5,22,2.5}{
\node(\d) at (50, \y)[draw, circle]{};
}
\drawpath{63,48,25,10,1}{dashed,}
\drawpath{68,25}{dashed,}
\drawpath{9,24,47,62,67}{dashed,}
\drawpath{47,78}{dashed, }
\xdef\cc{1}
\foreach \y in {9,24,47,62,67,78}{
	\drawpath{\cc,...,\y}	{}
	\pgfmathtruncatemacro{\ind}{\y+1}
	\xdef\cc{\ind}
}
\drawpath{2,79,8}{}
\drawpath{4,83,6}{}
\drawpath{65,51,29,13,3}{}
\drawpath{7,21,43,59,65}{}
\drawpath{19,5,15,16,82,18}{}
\drawpath{57,39,17,33,53}{}
\drawpath{38,73,34}{}
\drawpath{54,84,56}{}
\drawpath{35,80,37,55,35}{}
\drawpath{9,79,1}{purple,}
\drawpath{24,8,22,44,76,42,58,40,74,38,80,34,72,32,52,30,70,28,12,2,10}{red,}
\drawpath{47,23,45,60,43,20,41,18,39,56,37,81,35,54,33,16,31,14,29,50,27,11,25}{orange,}
\drawpath{62,46,77,44,21,6,19,82,15,4,13,28,69,26,48}{green,}
\drawpath{67,61,45,22,7,83,3,12,27,49,63}{blue,}
\drawpath{78,46,61,66,59,42,75,40,57,84,53,32,71,30,51,64,49,26,68}{brown,}
\foreach \y[count=\c from 1] in {4,7,11,7,2}{
\node at (-2.2, 25-5*\c){\tiny$\bf_\c$};
\node at (102, 25-5*\c){\tiny$\be_\c$};
}
\node at (-6, 8.2){\tiny$\bf_0$};
\node at (106, 8.2){\tiny$\be_0$};
\node at (50,6.9){\tiny$f_0^0$};
\foreach \y[count=\c from 0] in {2.5, 24.3,13.33,11.37,17.5,22.3}{
\node at (52, \y){\tiny$e_\c^0$};
}
\end{tikzpicture}
\caption{The $E_6$ quiver, with the $e_i$-paths (from right to left) in different colors.}
\label{fig-E6}
\end{sidewaysfigure}
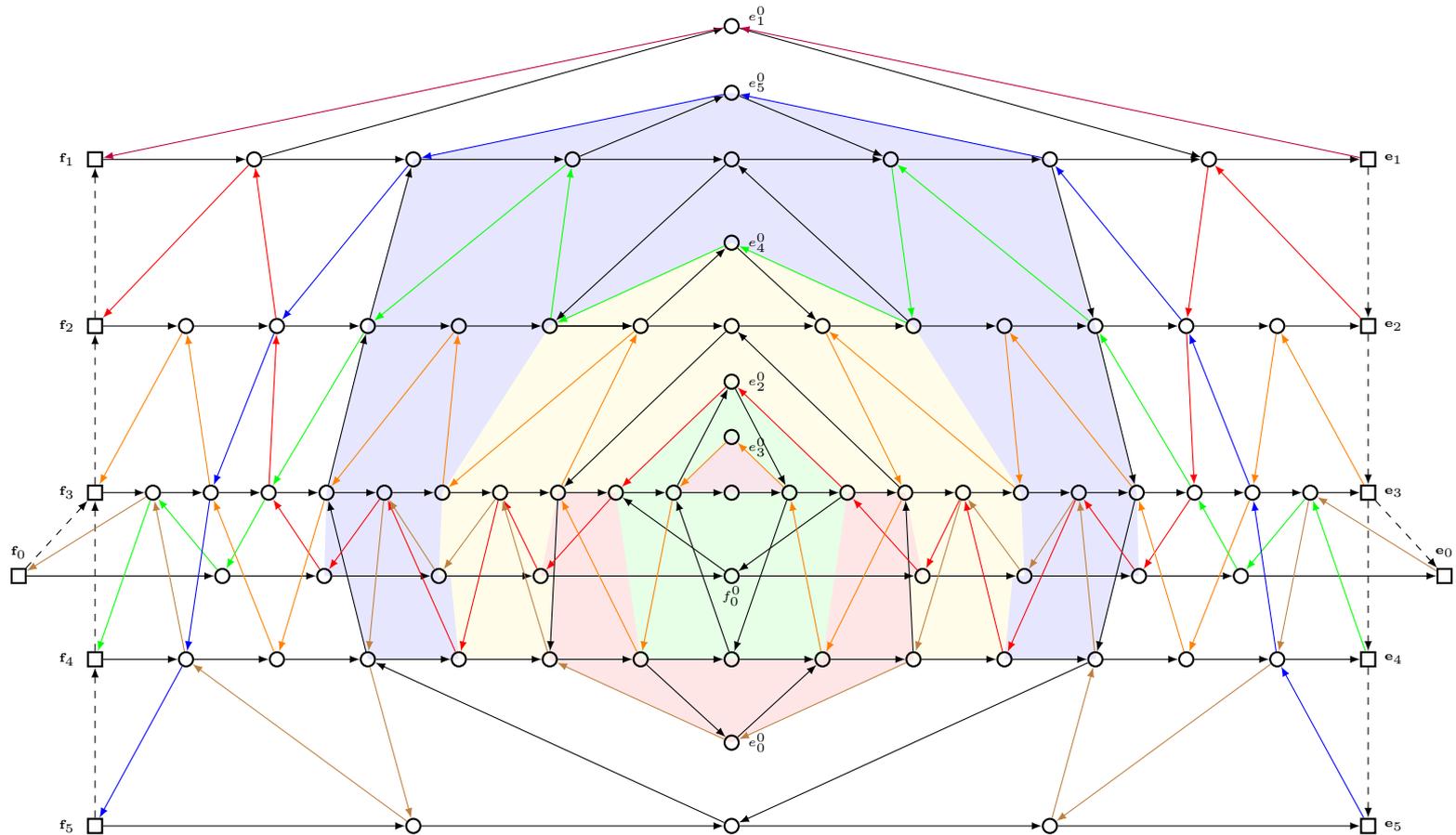

As another example, by considering the parabolic subgroup corresponds to the $D_5$ subquiver corresponding to $J=\{0,1,2,3,4\}$ (i.e. the whole colored region), we truncate the quiver and obtain the $e_i$ and $f_i$-paths representing the parabolic positive representations on the quiver $\bD(\over{\bi})$ as indicated in Figure \ref{fig-E6para}. This is a ``minimal" representation in the sense that the $P_J$ has the smallest codimension in $G$.

\begin{figure}[!htb]
\centering
\begin{tikzpicture}[every node/.style={inner sep=0, minimum size=0.2cm, thick}, x=0.12cm, y=0.47cm]
\xdef\c{0}
\foreach \y[count=\d from 0] in {5,7,9,7,5,5}{
	\foreach \x in {1,..., \y}{
		\pgfmathtruncatemacro{\ind}{\x+\c}
		\pgfmathsetmacro{\xx}{100*(\x-1)/(\y-1)}
		\ifthenelse{\d=5}{
			\ifthenelse{\x=1 \OR \x=5}{
				\node(\ind) at (2.5*\x-7.5+\xx, 7.5)[draw]{};
				}
				{
				\node(\ind) at (\xx, 7.5)[draw, circle]{};
				}
			}
		{	\ifthenelse{\x=1 \OR \x=\y}
		{
		\node(\ind) at (\xx, 20-5*\d)[draw]{};
		}
		{
		\node(\ind) at (\xx, 20-5*\d)[draw, circle]{};
		}
		}
	}
	\xdef\c{\c+\y}
}
\node(39) at (50, 24)[draw, circle]{};
\node at (52, 24.3){\tiny$e_1^0$};
\foreach \y[count=\c from 1] in {4,7,11,7,2}{
\node at (-2.2, 25-5*\c){\tiny$\bf_\c$};
\node at (102, 25-5*\c){\tiny$\be_\c$};
}
\node at (-6, 8.2){\tiny$\bf_0$};
\node at (106, 8.2){\tiny$\be_0$};

\xdef\cc{1}
\foreach \y in {5,12,21,28,33,38}{
	\drawpath{\cc,...,\y}	{}
	\pgfmathtruncatemacro{\ind}{\y+1}
	\xdef\cc{\ind}
}
\drawpath{2,39,4}{}
\drawpath{5,39,1}{purple}
\drawpath{12,4,10,18,36,16,8,2,6}{red}
\drawpath{21,11,19,26,17,24,15,7,13}{orange}
\drawpath{28,20,37,18,9,16,35,14,22}{green}
\drawpath{33,27,19,10,3,8,15,23,29}{blue}
\drawpath{38,20,27,32,25,30,23,14,34}{brown}
\drawpath{29,22,13,6,1}{dashed}
\drawpath{34,13}{dashed}
\drawpath{5,12,21,28,33}{dashed}
\drawpath{21,38}{dashed}
\end{tikzpicture}
\caption{Parabolic positive representations of $D_5\subset E_6$, with the $e_i$-paths (from right to left) in different colors.}
\label{fig-E6para}
\end{figure}
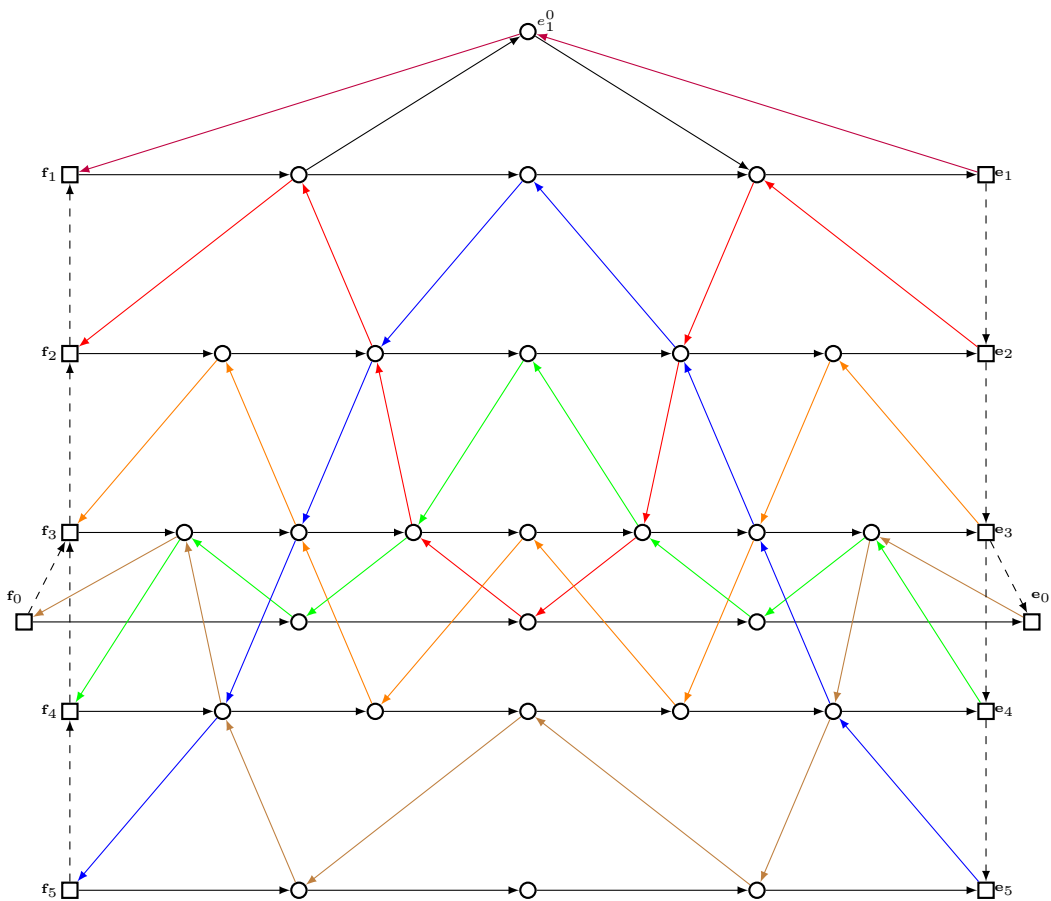

%==============================================================
\subsection{Type $B_n$}\label{sec:ex:bn}
The construction obvious works for arbitrary types, including non-simply-laced case. We demonstrate the minimal parabolic positive representations in type $B_n$. Recall that $\{1\}$ is the short root.

When $J=\{1,2,..., n-1\}$ we choose the longest reduced word\footnote{The same word used in \cite{Ip3}.} to be
\Eq{
\bi_0=(1212\;32123\;4321234\;...\; n\;n-1... n-1\; n)
}
so that $\over{\bi}=(n\;n-1\;... 1... \; n-1\; n)$, and we obtain the parabolic positive representations as depicted in Figure \ref{Bn1} on the quiver $\bD(\over{\bi})$, where the $e_i$ and $f_i$-paths are shown in red and blue respectively. (In this quiver, the top row has multiplier $d_i=\frac12$.)

\begin{figure}[htb!]
\centering
\begin{tikzpicture}[every node/.style={inner sep=0, minimum size=0.3cm, thick, fill=white, draw}, x=2cm, y=2cm]
\node (1) at (0,5) {};
\node (2) at (2,5) [circle]{};
\node (3) at (4,5) {};
\node (4) at (0,4) {};
\node (5) at (1,4) [circle]{};
\node (6) at (2,4) [circle]{};
\node (7) at (3,4) [circle]{};
\node (8) at (4,4) {};
\node (9) at (0,3) {};
\node (10) at (1,3)[circle] {};
\node (11) at (2,3)[circle] {};
\node (12) at (3,3)[circle] {};
\node (13) at (4,3) {};
\node (14) at (0,2) {};
\node (15) at (1,2)[circle] {};
\node (16) at (2,2)[circle] {};
\node (17) at (3,2)[circle] {};
\node (18) at (4,2) {};
\node (19) at (2,1)[circle] {};
\node at (-0.2,5)[draw=none]{\small$\bf_1$};
\node at (-0.2,4)[draw=none]{\small$\bf_2$};
\node at (-0.2,3)[draw=none]{\small$\bf_3$};
\node at (-0.2,2)[draw=none]{\small$\bf_4$};
\node at (4.3,5)[draw=none]{\small$\be_1$};
\node at (4.3,4)[draw=none]{\small$\be_2$};
\node at (4.3,3)[draw=none]{\small$\be_3$};
\node at (4.3,2)[draw=none]{\small$\be_4$};
\drawpath{1,2,3}{blue}
\drawpath{4,5,6,7,8}{vthick, blue}
\drawpath{9,10,11,12,13}{vthick, blue}
\drawpath{14,15,16,17,18}{vthick, blue}
\drawpath{3,7,2,5,1}{vthick, red}
\drawpath{8,12,6,10,4}{vthick,red}
\drawpath{13,17,11,15,9}{vthick,red}
\drawpath{18,19,14}{vthick,red}
\drawpath{15,19,17}{vthick}
\drawpath{1,4,9,14}{dashed, vthick}
\drawpath{18,13,8,3}{dashed, vthick}
\end{tikzpicture}
\caption{Parabolic positive representations of type $B_4$ with $J=\{1,2,3\}$. The $e_i$ and $f_i$-paths are shown in red and blue respectively.}\label{Bn1}
\end{figure}
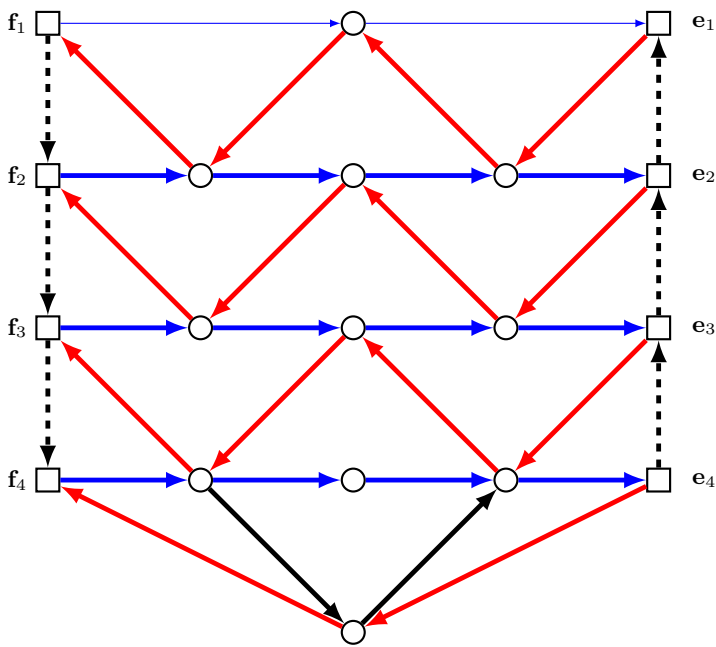

On the other hand, for $J=\{2,3,..., n\}$, $\g_J$ is of type $A_{n-1}$. Then the longest reduced word\footnote{This is the reduced word used in \cite{Le}.} $\bi_0$ is of the form
\Eq{
\bi_0 = \bi_J \bi_{A_n},
}
where $\bi_J$ a longest word for $w_J$, and it turns out $\bi_{A_n}$ is also the standard longest word \eqref{standardAw0}, i.e. we can take
\Eq{
\bi_0 = (n,n-1, n,...,2,3,..., n,\; 1,2,1,3,2,1,...,n,... 1).
}
Then
$\over{\bi}= \bi_{A_n}$ and we obtain the parabolic positive representations on $\bD(\over{\bi})$ as in Figure \ref{Bn2}. However, note that this is \emph{not} the usual type $A_n$ full quiver since we have extra multipliers in this Example. In particular the auxiliary quiver $\bH(\over{\bi})$ only consists of a single node $\{e_1^0\}$.

\begin{figure}[htb!]
\centering
\begin{tikzpicture}[every node/.style={inner sep=0, minimum size=0.3cm, thick, fill=none, draw,circle}, x=0.75cm, y=2cm]
\node (1)[rectangle] at (0,4) {};
\node (2)[red] at (2,4) {};
\node (3)[red] at (4,4) {};
\node (4)[red] at (6,4) {};
\node (5) at (8,4) {};
\node (6)[red] at (10,4) {};
\node (7)[red] at (12,4) {};
\node (8)[red] at (14,4) {};
\node (9)[rectangle] at (16,4) {};
\node (10)[rectangle] at (2,3) {};
\node (11) at (4,3) {};
\node (12) at (6,3) {};
\node (13) at (8,3) {};
\node (14) at (10,3) {};
\node (15) at (12,3) {};
\node (16)[rectangle] at (14,3) {};
\node (17)[rectangle] at (4,2) {};
\node (18) at (6,2) {};
\node (19) at (8,2) {};
\node (20) at (10,2) {};
\node (21)[rectangle] at (12,2) {};
\node (22)[rectangle] at (6,1) {};
\node (23) at (8,1) {};
\node (24)[rectangle] at (10,1) {};
\node (25) at (8,5) {};
\node [draw=none] at (-0.5,4){\small$\bf_1$};
\node [draw=none] at (1.5,3){\small$\bf_2$};
\node [draw=none] at (3.5,2){\small$\bf_3$};
\node [draw=none] at (5.5,1){\small$\bf_4$};
\node [draw=none] at (16.5,4){\small$\be_1$};
\node [draw=none] at (14.5,3){\small$\be_2$};
\node [draw=none] at (12.5,2){\small$\be_3$};
\node [draw=none] at (10.5,1){\small$\be_4$};
\drawpath{1,2,3,4,5,6,7,8,9}{};
\drawpath{2,25,8}{};
\drawpath{10,11,12,13,14,15,16}{vthick};
\drawpath{10,11,12,13,14,15,16}{vthick};
\drawpath{17,18,19,20,21}{vthick};
\drawpath{22,23,24}{vthick};
\drawpath{22,17,10,1}{dashed, vthick};
\drawpath{9,16,21,24}{dashed, vthick};
\drawpath{9,25,1}{green};
\drawpath{16,8,15,20,23,18,11,2,10}{red, vthick}
\drawpath{21,15,7,14,19,12,3,11,17}{orange, vthick}
\drawpath{24,20,14,6,13,4,12,18,22}{blue, vthick}
\end{tikzpicture}
\caption{Parabolic positive representations of type $B_4$ with $J=\{2,3,4\}$.  The $e_i$-paths (from right to left) are shown in different colors.}\label{Bn2}
\end{figure}
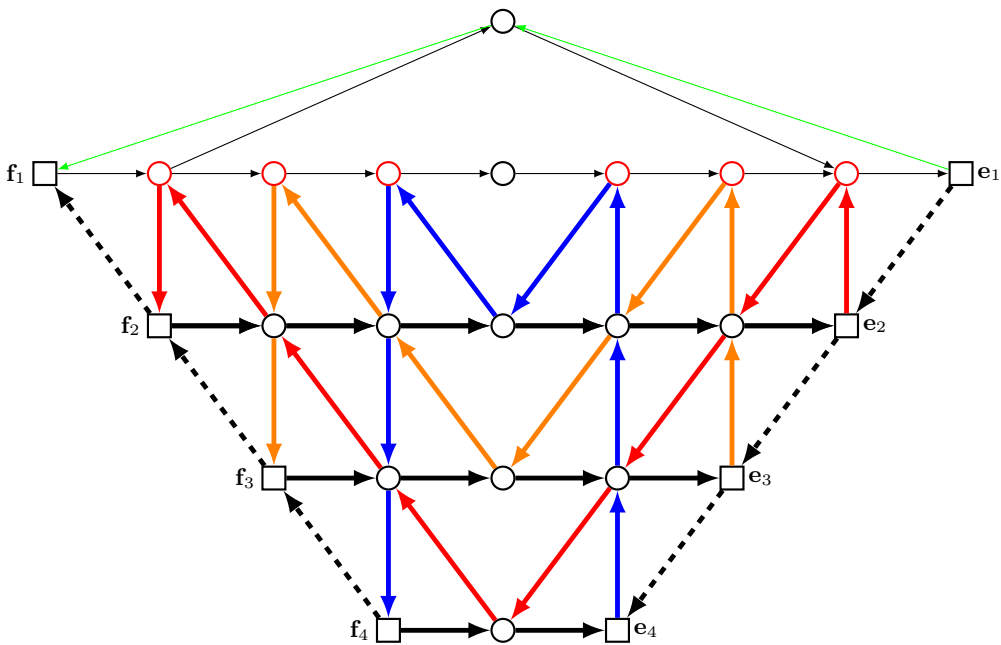
Here the red circles on a node $k$ of the quiver indicates a modification of the $e_i$-paths by doubling the nodes with a $[2]_{q_s}$ factor as in the proof of Lemma \ref{decomp}:
\Eq{
\cdots + X_{..., j} + [2]_{q_s}X_{..., j,k} + X_{..., j,k^2} + X_{..., j,k^2, l}+ \cdots.
}

As remark in Section \ref{sec:ppr:mod}, we still have the modular double transformation
\Eq{
\pi(\be_i^\vee):=\pi(\be_i)^{\frac{1}{b_i^2}}
} to reproduce the parabolic positive representations of $\cU_{q^\vee}(\g_{C_n})$ in type $C_n$, where the quiver remains the same, but with the multipliers of each nodes changes accordingly by $\dis d_i\mapsto \frac{1}{2d_i}$.
%==============================================================
\section{Further discussions}\label{sec:open}
There are several natural questions arising from the construction of parabolic positive representations that will be interesting to understand, some of which are motivated from the simplest case of the minimal positive representations in Section \ref{sec:min}. 

During the construction, we have used the basic quiver $\bQ(\bi)$ which is naturally associated to the Poisson structure of the \emph{partial configuration space} $\mathrm{Conf}_u^e(\cA)$ described in \cite{GS}. Therefore one should try to understand and possibly simplify the proofs of the parabolic positive representations by quantizing the geometrical methods using, perhaps, partial decorated $G$-local system, where the decorations are provided by partial flags. However, it seems we do not have a simple association of the partial quiver $\bQ(\bi)$ to the triangles of triangulated surfaces since the frozen degree of each edge does not match.

\subsection*{Polynomial image and Lusztig's braid group action} 

We have seen that the homomorphism
$$\fD_q(\g)\to \cX_q^{\bD(\bi)}$$
lies in the universally Laurent polynomials of the quantum cluster mutation class of $\cX_q^{\bD(\bi)}$. However from all calculations so far we observe that the generators $e_i, f_i$ are in fact always \emph{polynomials}, without any negative powers of $X_k$ involved. 
\begin{Con}\label{polycon} The image of the generators of $\fD_q(\g)$ in $\cX_q^{\bD(\bi)}$ are polynomials in the cluster variables for \emph{any} cluster in the mutation class of the quantum torus algebra.
\end{Con}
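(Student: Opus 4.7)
The plan is to proceed by induction on the length of a mutation sequence connecting the initial cluster of $\cX_q^{\bD(\bi)}$ to an arbitrary cluster in its mutation class. The base case is immediate from Corollary \ref{embex}: each of $\be_i, \bf_i, \bK_i, \bK_i'$ is a positive $q$-integer combination of monomials in the initial cluster variables, with no inverse variables appearing. The inductive step reduces to a key lemma: polynomiality in cluster variables is preserved under a single mutation $\mu_k$ at any unfrozen vertex $k$. This lemma is false for arbitrary positive polynomials --- the mutation rules $X_k' = X_k^{-1}$ and $X_i' = X_i \prod_{r=1}^{b_{ki}}(1 + q_i^{2r-1} X_k)^{-1}$ (when $b_{ki}>0$) easily introduce poles --- so the inductive step must exploit the specific structure of the generators as telescoping sums whose monomials are indexed by nested subpaths of the $e_i$- and $f_i$-paths described in Corollary \ref{embex}.

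The most promising route I would pursue is to encode each generator as a positive combination of \emph{quantum theta functions} in the sense of the quantum cluster scattering diagram of Gross--Hacking--Keel--Kontsevich (with quantum refinements due to Davison--Mandel--Qin). Theta functions are universally positive polynomials, i.e.\ genuine polynomials in the cluster variables of every cluster, so exhibiting each $\be_i, \bf_i, \bK_i$ as a positive integer combination of theta functions would imply Conjecture \ref{polycon} directly. As a preliminary step I would verify in low-rank examples (say $A_2, A_3, B_2$) that the telescoping expressions of Corollary \ref{embex} decompose into theta functions, or more weakly into cluster monomials. A more hands-on alternative uses the Decomposition Lemma \ref{decomp} together with Lemma \ref{coxeter}: since Coxeter moves correspond to explicit quantum cluster mutation sequences under which the generators retain their telescoping form, one may attempt to reduce arbitrary mutation sequences to Coxeter-type mutations composed with ``elementary'' mutations inside rank-$2$ subquivers, where the statement is checkable by direct calculation. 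A natural intermediate object to track is the $g$-vector of each monomial in the telescoping sum; showing that these $g$-vectors remain non-negative after mutation would give polynomiality almost by definition.

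The hard part will be controlling mutations that do \emph{not} arise from Coxeter moves, particularly in the infinite mutation classes of types $D_n, E_6, E_7, E_8$. For a generic mutation $\mu_k$, substitution disperses each monomial of the telescoping sum into a Laurent polynomial, and polynomiality of the aggregate depends on delicate cancellations of negative powers of $X_k'$ across many monomials of the sum. Identifying and proving these cancellations without invoking the full strength of the scattering-diagram machinery appears to be the principal obstacle. A secondary difficulty is that, even granting classical theta-function positivity, the quantum version of the GHKK positivity theorem is not yet available in full generality for all Dynkin types, so the final argument may have to be tailored type-by-type, with exceptional types requiring either computer verification or a refined combinatorial model of the relevant quantum scattering functions.
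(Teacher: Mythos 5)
This statement is Conjecture~\ref{polycon}, which the paper itself leaves \emph{open}: it offers no proof, only a heuristic plausibility argument (the embedding lands in universally Laurent polynomials by \cite[Prop.~13.11]{GS}) and numerical evidence from a large $E_8$ computation. So there is no ``paper's proof'' to compare against, and your proposal should be assessed as an independent research plan rather than as a reconstruction of a known argument.

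As a plan, it is sensible in outline but contains the same gaps that have kept the conjecture open. The base case is fine, and you are right that naive induction on mutations fails: the relation $X_k' = X_k^{-1}$ and the denominator in $\mu_k^q(X_i')$ when $b_{ki}>0$ mean that polynomiality of the aggregate is not preserved monomial-by-monomial, so one must exhibit cancellation. But each of the three routes you sketch either presupposes an unproven input or quietly reduces to the original problem. (i) The theta-function route requires that the generators be positive combinations of quantum theta functions \emph{and} that quantum theta functions be universally polynomial; the latter is precisely the open quantum positivity statement you flag, and the former is an additional structural claim that would itself need proof (being universally Laurent is not the same as being a positive combination of theta functions). (ii) Tracking $g$-vectors and hoping they ``remain non-negative'' does not work as stated: $g$-vectors are fixed data attached to cluster monomials, not quantities that change under mutation of the ambient seed, and nonnegativity of $g$-vector entries is not the criterion for a Laurent polynomial to have no inverse powers in a given cluster. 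You would need to say what invariant you are actually tracking and why its nonnegativity implies polynomiality. (iii) Reducing to Coxeter moves plus ``elementary'' rank-2 mutations misses the hard cases entirely: in types $D_n, E_n$ the mutation class is infinite and most seeds are not reachable by Coxeter moves composed with moves inside rank-2 subquivers, so this route only re-proves polynomiality in the small subclass of seeds coming from reduced-word changes, which is already essentially what the Decomposition Lemma gives. In short, the proposal correctly identifies the problem and its difficulty, but none of the three strategies closes the gap; the conjecture remains open, and a proof would require a genuinely new input (e.g., a quantum scattering-diagram positivity theorem applicable here, or a combinatorial model of the mutated expressions with manifest positivity).
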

This assumption actually allows us to verify the Lusztig's braid group action on $\bD(\bi_0)$. Recall from Definition \ref{lusztigbraid} that we have Lusztig's braid group action $T_i$, which is known to be a cluster map \cite{GS, Le}, i.e. represented by cluster transformation on $\cX_q^{\bD(\bi_0)}$. In particular, the automorphism for the longest word $\bi_0$:
$$T_{\bi_0}:=T_{i_1}\cdots T_{i_N}$$
interchanges the action of the $\be_i$ and $\bf_i$ generators as in Proposition \ref{Ti0}. In terms of quiver, this means that there exists a sequence of mutations such that the basic quiver is mirror reflected and the three frozen sides $\{f_i^0, f_i^{n_i}, e_i^0\}$ are cyclically permuted.

\begin{Con} There exists a cluster map of $\cX_q^{\bD(\over{\bi})}$ for any parabolic positive representations $\cP_\l^J$ that interchanges the action of $\be_i$ and $\bf_i$.
\end{Con}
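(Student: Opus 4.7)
The natural strategy is to construct the desired cluster map as a ``descent'' of the Lusztig braid for $w_0$ to the parabolic quiver $\bD(\over{\bi})$. Recall that for the full positive representation, the automorphism $T_{\bi_0}=T_{i_1}\circ\cdots\circ T_{i_N}$ interchanges $\be_i\leftrightarrow\bf_i$ modulo the Cartan part and is realized as an explicit sequence of quantum cluster mutations on $\cX_q^{\bD(\bi_0)}$. The plan is to use the factorization $w_0=w_J\over{w}$ to decompose $T_{\bi_0}=T_{\bi_J}\circ T_{\over{\bi}}$ and argue that the $T_{\over{\bi}}$-portion survives the parabolic truncation as a cluster map on $\cX_q^{\bD(\over{\bi})}$.

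The first step is to choose the mutation sequence realizing $T_{\bi_0}$ so that it factors through two phases corresponding to $T_{\over{\bi}}$ and $T_{\bi_J}$, with each phase's mutations lying within the respective half of $\bD(\bi_0)=\bD(\bi_J)*\bD(\over{\bi})$ after an appropriate Coxeter rearrangement; this can be arranged using the combinatorial normal form of Lemma \ref{coxeter}. The next step is to verify that, under the parabolic truncation $e_j^J,f_j^J\mapsto 0$ and $K_j^J,{K_j'}^J\mapsto 1$ used in the proof of Theorem \ref{main}, the phase $T_{\bi_J}$ becomes trivial because it acts only on the suppressed variables, leaving $T_{\over{\bi}}$ as a well-defined cluster automorphism of $\cX_q^{\bD(\over{\bi})}$.

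Finally, one must check that the descended cluster map actually interchanges $\be_i\leftrightarrow\bf_j$ for the correct pairing of indices. In the minimal type $A_n$ case the double Dynkin involution $i\mapsto i^{**}$ computed in Section \ref{sec:ppr:dec} acts as a cyclic shift, and one directly verifies from the explicit embedding $\be_i\mapsto X_{f_i^1}+X_{f_i^1,f_{i-1}^0}$, $\bf_i\mapsto X_{f_i^{-1}}+X_{f_i^{-1},f_i^0}$ that the natural mirror reflection of $\bD(\over{\bi})$ identifies $\be_i$ with $\bf_{i^{**}}$ modulo a Cartan factor. This suggests that in general the cluster map sends $\be_i\leftrightarrow\bf_{i^{**}}$ up to Cartan, and the verification can be reduced to checking this on a single pair of generators in each case using the cluster mutation formulas of Definition \ref{qmut} and the telescoping structure described in the proof of Lemma \ref{decomp}.

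The main obstacle is the rigorous separation of the mutation sequence of $T_{\bi_0}$ into phases confined to $\bD(\bi_J)$ and $\bD(\over{\bi})$ respectively, particularly at the amalgamation boundary where frozen vertices are shared between the two halves, so that ``boundary mutations'' reduce correctly after truncation. A cleaner conceptual route, following the geometric interpretation discussed in the introduction via the partial configuration space $\mathrm{Conf}_u^e(\cA)$ of \cite{GS}, would be to identify directly a natural geometric involution on this partial configuration space (analogous to the role of $w_0$ in the full flag case) and quantize it to recover the cluster map, which should automatically bypass the combinatorial boundary issue.
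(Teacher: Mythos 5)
This statement is a \emph{Conjecture} in the paper, not a theorem: the paper explicitly leaves it open, remarking that ``we have verified in a few simple cases that indeed we can interchange the generators by cluster transformations. But in general the mutation sequence for $T_{\bi_0}$ is very complicated and meshed up all the variables, and it is unclear how it can be factorized in the parabolic case.'' There is therefore no paper proof to compare against, and your proposal should be read as a strategy sketch rather than a proof.

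Your strategy is essentially the one the author considers and finds obstructed, and the sketch contains a genuine gap exactly where the paper points: the step claiming that after the parabolic truncation ``the phase $T_{\bi_J}$ becomes trivial because it acts only on the suppressed variables'' is neither justified nor plausible as stated. The factorization $T_{\bi_0}=T_{\bi_J}T_{\over{\bi}}$ holds at the level of the braid group, but each $T_j$ with $j\in J$ is a global automorphism of $\cU_q(\g)$ (it transforms $\be_i, \bf_i, \bK_i$ for every $i$ adjacent to $j$, including $i\notin J$), so the cluster mutation sequence realizing $T_{\bi_J}$ is \emph{not} confined to the subquiver $\bQ(\bi_J)$; it will involve boundary and exterior variables, which is precisely why the quoted passage says the mutation sequence ``meshed up all the variables.'' Consequently, after killing the $J$-variables, $T_{\bi_J}$ does not descend to the identity; in general the intermediate mutations may not even make sense (they can introduce $0\inv$ or lose information about the amalgamated frozen vertices), so nothing guarantees that a well-defined cluster map on $\cX_q^{\bD(\over{\bi})}$ survives. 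There is a second issue you only partially address: $T_{\over{\bi}}$ for a proper factor $\over{w}\neq w_0$ does \emph{not} interchange $\be_i\leftrightarrow\bf_i$ (Proposition \ref{Ti0} requires the full longest element), so even if it descended, one would need a separate argument — your guess involving the double Dynkin involution $i\mapsto i^{**}$ is a reasonable direction, but it needs to be proved rather than asserted. Your last paragraph correctly names the central obstacle, and your suggestion of a geometric route via the partial configuration space $\mathrm{Conf}_u^e(\cA)$ mirrors the author's own proposed direction in Sections~\ref{sec:intro} and~\ref{sec:open}; that observation is sound but is exactly what remains to be carried out.
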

Using the explicit formula \eqref{fiX}, one can compute directly that the non-simple generators $f_\a$ are always nonzero on the double\footnote{This is not true without the double. For example if we take $\bi=(1,2)$ then $f_{12}=0$ on $\cX_q^{\bQ(\bi)}$.} $\cX_q^{\bD(\bi)}$. Hence a consequence of this conjecture is that
\begin{Cor}\label{Rcon} The universal $\cR$ operator \eqref{uniR} is well-defined on tensor products $\cP_\l^J\ox \cP_{\l'}^{J'}$ of parabolic positive representations as unitary transformation.
\end{Cor}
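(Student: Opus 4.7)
The plan is to deduce this from the two conjectures above: Conjecture \ref{polycon} together with the conjectural realization of Lusztig's braid group as a cluster map on $\cX_q^{\bD(\over{\bi})}$. Given these, the strategy will parallel the one underlying Theorem \ref{minR}, which is the unconditional minimal case. Since the universal $\cR$ operator is a \emph{finite} product over $\Phi_+$, no convergence issues arise; the work is to show that each factor $g_{b_\a}(\be_\a\ox\bf_\a)$ and the Cartan prefactor $\cK$ are well-defined unitary operators on $\cP_\l^J\ox\cP_{\l'}^{J'}$.

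The first step is to show that every non-simple generator $\be_\a = T_{i_1}\cdots T_{i_{k-1}}(\be_{i_k})$, and its counterpart $\bf_\a$, is a nonzero positive essentially self-adjoint operator on the respective parabolic positive representation. Under the conjectural cluster realization of the Lusztig braid group, the sequence $T_{i_1}\cdots T_{i_{k-1}}$ is implemented by a chain of quantum cluster mutations that transports $\cX_q^{\bD(\over{\bi})}$ to a new cluster in which the image of $\be_\a$ becomes a single cluster variable (or, more generally, a telescopic sum of monomials as in Corollary \ref{embex}). By Conjecture \ref{polycon}, this transported expression has no negative powers of the new cluster variables, so it acts on $\cP_\l^J$ as a positive self-adjoint operator via the polarization of Definition \ref{posrepX}; the same argument yields positivity of $\bf_\a$ on $\cP_{\l'}^{J'}$.

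The second step uses functional calculus. Since $\be_\a$ and $\bf_\a$ are each positive self-adjoint, their tensor product $\be_\a\ox\bf_\a$ is a positive self-adjoint operator on $\cP_\l^J\ox\cP_{\l'}^{J'}$, defined through the joint spectral resolution. The quantum dilogarithm satisfies $|g_b(x)|=1$ for $x\in\R_{>0}$, so by the spectral theorem $g_{b_\a}(\be_\a\ox\bf_\a)$ is a unitary operator. The Cartan factor $\cK$ is itself a unitary exponential of a bilinear form in the positive self-adjoint operators $\log\bK_i\ox\log\bK_j$, by the same spectral argument. A finite product of unitaries being unitary, this yields the claim.

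The main obstacle is the conditional character of the argument: both Conjecture \ref{polycon} and the cluster realization of the Lusztig braid group on $\cX_q^{\bD(\over{\bi})}$ are unproven in the full parabolic setting and are identified in the paper as open problems. An unconditional alternative would be to verify directly that $\be_\a$ and $\bf_\a$ are nonzero positive elements in the initial cluster of $\cX_q^{\bD(\over{\bi})}$, using the inductive formula $\be_\a = (q-q\inv)^{-1}[\be_{\a'},\be_i]_{q^{1/2}}$ combined with the explicit decompositions of Lemma \ref{decomp}, thereby extending the minimal-case computation of Section \ref{sec:min:nonsimple}; the combinatorics are intricate but the pattern already visible in that section suggests the approach is feasible.
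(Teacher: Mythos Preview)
Your argument is correct in outline but takes a longer and more conditional route than the paper. The paper's proof is one sentence: using the explicit telescoping formula \eqref{fiX}, one computes \emph{directly} that every non-simple $f_\a$ is nonzero on the double $\cX_q^{\bD(\bi)}$ --- this step is unconditional. The only conjecture invoked is the single interchange conjecture (a cluster map on $\cX_q^{\bD(\over{\bi})}$ swapping $\be_i\leftrightarrow\bf_i$), which then transports the nonvanishing of $f_\a$ to that of $e_\a$. You instead assume the full Lusztig braid group is realized by cluster mutations \emph{and} Conjecture~\ref{polycon}; the latter is actually superfluous even in your own argument, since quantum cluster mutations act on the polarized Hilbert space by conjugation with the unitary quantum dilogarithm, and unitary conjugation preserves positive self-adjointness regardless of whether the resulting expression is polynomial in the new cluster variables. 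Finally, the ``unconditional alternative'' you sketch at the end --- verifying $f_\a\neq 0$ directly in the initial cluster --- is precisely what the paper does for the $\bf$ side, and given \eqref{fiX} it is a short computation rather than an intricate one.
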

For the parabolic positive representations, we have verified in a few simple cases that indeed we can interchange the generators by cluster transformations. But in general the mutation sequence for $T_{\bi_0}$ is very complicated and meshed up all the variables, and it is unclear how it can be factorized in the parabolic case. 

For example, Ian Le previously constructed explicitly the mutation sequence for the $T_{\bi_0}$ action in type $E_8$ corresponding to the Coxeter words 
$$\bi_0:=(\underbrace{12345670 \cdots 12345670}_{\mbox{$15$ copies}})^{op}.$$
By construction we know that the generators $\bf_i$ each involve 15 monomials on $\cX_q^{\bQ(\bi_0)}$. Using the mutation sequence, we observe that during the mutations, the $\bf_0$ generator grows up to $825887337$ terms before dropping down to $147249$ terms\footnote{The number of monomial terms for the other generators are $(1,2,3,244,245,246,247)$.} at the end giving the required action of the generator $\be_0$. During the process all the expressions involved are polynomials of the cluster variables as conjectured, and it will be important to give a more combinatorial description of these polynomials.

\subsection*{Casimir operators}

In the minimal positive representations, we have studied in detail the central characters and the action of the Casimir operators, and show that they lie outside the spectrum of the positive Casimirs of the maximal case. It is then natural to propose the following
\begin{Con} The spectrum of the Casimirs of arbitrary parabolic positive representations $\cP_\l^J$ of $\cU_q(\g_\R)$ are all real-valued and disjoint in $\R^n$.
\end{Con}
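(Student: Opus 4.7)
The plan is to extend the virtual highest weight method used in the proof of Theorem \ref{minCas} to arbitrary parabolic positive representations. The first step is to construct, for each pair $(J,\l)$, a distributional ``highest weight vector'' $\cF_\l^J$ on $L^2(\R^{l(\over{w})})$ satisfying $\pi_\l^J(\be_i)\cdot \cF_\l^J=0$ for every $i\in I$. In the minimal case this was a product of complex delta functions supported at $u_k=\sqrt{-1}\,k\,\c_b$; in general I would seek an analogous product $\cF_\l^J=\prod_k\d(u_k-\sqrt{-1}\,m_k\,\c_b)$ with integer exponents $m_k$ determined by the combinatorics of the $e_i$-paths on $\bD(\over{\bi})$. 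By the Decomposition Lemma, each $\be_i$ is realized as a telescoping sum along its path on the truncated quiver, so the annihilation condition translates into a linear recursion on the $m_k$ along the arrows of $\bD(\over{\bi})$; I expect this recursion always admits an integer solution, mirroring the combinatorial fact that every reduced word admits a natural ``principal'' set of Lusztig coordinates.

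With $\cF_\l^J$ in hand, the second step is the Casimir computation. Writing $\bC_k=\bC_K+\bC_E$ as in Section \ref{sec:min:cc}, the piece $\bC_E$ has $\be_i$ on the right and hence kills $\cF_\l^J$, so only the Cartan part $\bC_K\in\cU_q^{sc}(\fh)$ contributes. On $\cF_\l^J$ each $\bK_i$ acts by $e\bigl(2\alpha_i(\l)+\sqrt{-1}\,\c_b\,n_i\bigr)$ for suitable integers $n_i$ and real linear functionals $\alpha_i$ depending only on $(\l_j)_{j\in I\setminus J}$. Since the imaginary shifts are rational multiples of $\c_b$ with denominator dividing the Coxeter number, their exponentials combine into rational powers of $q$ times signs, and the full sum assembles into symmetric combinations such as $q$-integers, which are real for $|q|=1$. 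The final eigenvalue of $\bC_k$ is therefore a real Laurent polynomial in the variables $t_j:=e(c_j\l_j)\in\R_{>0}$, establishing the reality claim.

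For the disjointness claim, I would study the image $\Sigma_J:=\{(\pi_\l^J(\bC_1),\ldots,\pi_\l^J(\bC_n)):\l\in\R^{|I\setminus J|}\}\subset\R^n$, a real semi-algebraic variety of dimension $|I\setminus J|$. Disjointness from the positive (maximal) Casimir region $\Sigma_\emptyset$ should follow from a discriminant analysis as in Theorem \ref{minCas}: the monic polynomial $P_J(x)$ whose symmetric functions are $\pi_\l^J(\bC_k)$ has certain roots forced to unimodular (not positive real) complex numbers by the frozen directions $j\in J$, while $\Sigma_\emptyset$ by contrast parametrizes products of all positive real roots. The hard part will be establishing $\Sigma_J\cap\Sigma_{J'}=\emptyset$ for two distinct nonempty subsets $J\neq J'$. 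I expect this to require a representation-theoretic reinterpretation identifying $\pi_\l^J(\bC_k)$ as a character of the center of $\cU_q^{sc}(\g)$ evaluated at a parabolically induced weight depending on $(J,\l)$, so that disjointness across $J$ reduces to a quantum Harish-Chandra-type statement: distinct parabolic data produce distinct infinitesimal characters. Matching this abstract picture with the explicit cluster formulas on $\cX_q^{\bD(\over{\bi})}$, and ruling out accidental coincidences between the ``frozen'' root patterns of $J$ and $J'$, is where the genuine difficulty lies.
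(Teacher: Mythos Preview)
The statement you are attempting is labeled in the paper as a \emph{Conjecture}, not a theorem: the paper offers no proof. What the paper does say, immediately after stating the conjecture, is that ``the techniques using virtual highest weight vector should be generalized to arbitrary parabolic case,'' and that for maximal parabolic $J=I\setminus\{j\}$ one expects $\bK_i$ to act trivially on the virtual highest weight for $i\in J$. Your proposal is therefore not competing with an existing proof; it is a fleshed-out version of exactly the strategy the author already gestures at. In that sense your approach and the paper's intended approach coincide.

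That said, your write-up is a plan, not a proof, and several of its steps are genuine open problems rather than routine verifications. First, the existence of the distributional highest weight $\cF_\l^J=\prod_k\d(u_k-\sqrt{-1}m_k\c_b)$ is asserted by analogy, but the ``linear recursion on the $m_k$'' you describe is not obviously solvable in integers for an arbitrary truncated quiver $\bD(\over{\bi})$; in the minimal case it worked because the $e_i$-paths are length two and the recursion is trivially triangular. Second, your reality argument is incomplete: you claim the imaginary shifts combine into ``rational powers of $q$ times signs'' that assemble into real $q$-integers, but for general $J$ the Cartan eigenvalues $\pi_\l^J(\bK_i)$ on $\cF_\l^J$ need not be trivial for $i\in J$ (the paper only \emph{expects} this for maximal parabolic), and without that the cancellation of imaginary parts in $\bC_K$ is not automatic. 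Third, and most seriously, you yourself flag the disjointness of $\Sigma_J$ and $\Sigma_{J'}$ for distinct nonempty $J\neq J'$ as ``the hard part'' and propose reducing it to a quantum Harish-Chandra statement; no such statement is available in the split real $|q|=1$ setting, and the discriminant trick from Theorem \ref{minCas} only separates a given $\Sigma_J$ from the maximal region $\Sigma_\emptyset$, not two parabolic regions from each other. So what you have is a reasonable research program aligned with the paper's own suggestion, but not a proof.
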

The techniques using virtual highest weight vector should be generalized to arbitrary parabolic case, and in particular, for the maximal parabolic subgroup corresponding to $J=I\setminus\{j\}$ for a single index $j$, we expect again that the representations behave like the ``fundamental representations'' in the sense that $K_i$ acts trivially for $i\in J$ on the virtual highest weight functionals.

Understanding the spectrum of the Casimirs is an important step towards the decomposition of the tensor product of positive representations. It is clear \cite{Ip7} that the parabolic positive representations of the tensor product
$$\cP_\l^J\ox \cP_{\l'}^{J'}$$
can be constructed by amalgamating two copies $\bD(\bi)*\bD(\bi')$ side by side by concatenating the corresponding $e_i$ and $f_i$-paths. It is then a natural question to decompose it into irreducible components, and together with the existence of the universal $\cR$ operator in Conjecture \ref{Rcon}, possibly provide us with another candidate of \emph{continuous braided tensor category}.

In the case of the usual (maximal) positive representations, the decomposition was done in type $A_n$ in \cite{SS2, SS3} by a sequence of cluster mutations corresponding to flipping the triangulation on a two-punctured disk in order to simplify the Casimir operators and compare them with the Hamiltonians of a Coxeter-Toda conformal field theory. The same trick will not work here as pointed out above, because the quivers involved now are not naturally associated to any triangles of triangulations of punctured Riemann surfaces. We remark that the decomposition in the other types for the usual positive representations is also still an open question.

\subsection*{Modules for affine quantum groups}

Finally, the construction of the evaluation modules of $\cU_q(\what{\sl}_{n+1})$ in Section \ref{sec:min:ev} utilizes the symmetry of the minimal quiver $\bD(\over{\bi})$, and one can try to construct similar quivers for the positive representations of other quantum affine or Kac-Moody group. As pointed out in Remark \ref{KM}, the Borel part of a general Kac-Moody quantum group can be explicitly constructed. For the full quantum group, in type $A_n^{(1)}$ it is clear that one approach is to use the evaluation module to construct a higher rank quiver from those of $\cU_q(\sl_{n+1})$, and it will be interesting to understand the combinatorics behind and find natural generalization to other types. 
%==============================================================


\begin{thebibliography}{99}

\bibitem{BZ}
A. Berenstein, A. Zelevinsky,
\textit{Total positivity in Schubert varieties},
Comment. Math. Helv. \textbf{72} (1), (1997): 128-166.

 \bibitem{BT}
  A.G. Bytsko, J. Teschner,
  \textit{R-operator, co-product and Haar-measure for the modular double of $\cU_q(\sl(2,\R))$},
  Comm. Math. Phys., \textbf{240} (1-2), (2003): 171-196.
  
\bibitem{Dr}
V. G. Drinfeld,
\textit{Hopf algebras and the quantum Yang-Baxter equation},
Doklady Akademii Nauk SSSR, \textbf{283} (5), (1985): 1060-1064.
\textit{Quantum Groups},
Proc. Int. Con. Math., Berkeley, (1986):798-820.

\bibitem{Fa1}
  L. D. Faddeev,
  \textit{Discrete Heisenberg-Weyl group and modular group},
  Lett. Math. Phys., \textbf{34} (3), (1995): 249-254.
  
\bibitem{Fa2}
  L. D. Faddeev,
  \textit{Modular double of quantum group},
arXiv:math/9912078, (1999).

  \bibitem{FG1}
V. Fock, A. Goncharov,
\textit{Moduli spaces of local systems and higher Teichm\"uller theory},
Publications Math\'ematiques de l'IH\'ES, \textbf{103}, (2006): 1-211.

\bibitem{FG2}
V. Fock, A. Goncharov,
\textit{Cluster $\cX$-varieties, amalgamation, and Poisson-Lie groups},
Algebraic geometry and number theory. Birkh\"auser Boston, (2006): 27-68.

\bibitem{FG3}
V. Fock, A. Goncharov,
\textit{The quantum dilogarithm and representations of quantum cluster varieties},
Invent. Math. \textbf{175}(2), (2009): 223-286.


\bibitem{FI}
I. Frenkel, I. Ip,
\textit{Positive representations of split real quantum groups and future perspectives},
Int. Math. Res. Not., \textbf{2014} (8), (2014): 2126-2164.

\bibitem{GS}
A. Goncharov, L. Shen,
\textit{Quantum geometry of moduli spaces of local systems and representation theory},
arXiv:1904.10491 (2019).

 \bibitem{Ip1}
  I. Ip,
  \textit{Representation of the quantum plane, its quantum double and harmonic analysis on $GL_q^+(2,\R)$},  
Sel. Math. New Ser., \textbf{19} (4), (2013): 987-1082.

\bibitem{IpGL}
I. Ip,
\textit{Gauss-Lusztig decomposition of $GL_q^+(n,\R)$ and representations by $q$-tori},
Jour. Pure \& App. Alg., \textbf{219} (12), (2015): 5650-5672.
\bibitem{Ip2}
I. Ip,
\textit{Positive representations of split real simply-laced quantum groups}, 
Publ. R.I.M.S., \textbf{56} (3), (2020): 603-646.
\bibitem{Ip3}
I. Ip,
\textit{Positive representations of split real non-simply-laced quantum groups}, 
J. Alg, \textbf{425}, (2015): 245-276.

\bibitem{Ip4}
I. Ip,
\textit{Positive representations of split real quantum groups: the universal $R$ operator},
Int. Math. Res. Not., \textbf{2015} (1), (2015): 240-287.

\bibitem{Ip5}
I. Ip,
\textit{Positive Casimir and central characters of split real quantum groups},
Commun. Math. Phys. \textbf{344} (3), (2016): 857-888.

\bibitem{Ip7}
I. Ip,
\textit{Cluster realization of $\cU_q(\g)$ and factorization of universal $\cR$ matrix},
Sel. Math. New Ser., \textbf{24} (5), (2018): 4461-4553.

\bibitem{Ji1}
M. Jimbo,
\textit{A $q$-difference analogue of $\cU(\g)$ and the Yang-Baxter equation},
Lett. Math. Phys., \textbf{10}, (1985): 63-69.

\bibitem{Ji2}
M. Jimbo,
\textit{A $q$-analogue of $U(\gl(N + 1))$, Hecke algebra, and the Yang-Baxter equation},
Lett. Math. Phys., \textbf{11} (1986): 247-252.

\bibitem{Ka1}
R. M. Kashaev,
\textit{Quantization of Teichm\"uller spaces and the quantum dilogarithm},
Lett. Math. Phys., \textbf{43}(2), (1998):105–115.

\bibitem{Le}
I. Le,
\textit{Cluster structures on higher Teichm\"uller spaces for classical groups},
Forum Math. Sigma, \textbf{7} (E13), (2019).

\bibitem{Lu}
G. Lusztig,
\textit{Total positivity in reductive groups}, 
in: Lie theory and geometry: in honor of Bertram Kostant, Progress in Mathematics \textbf{123}, Birkh\"auser, (1994).

\bibitem{PT1}
  B. Ponsot, J. Teschner,
  \textit{Liouville bootstrap via harmonic analysis on a noncompact quantum group},
  arXiv: hep-th/9911110, (1999).
  
\bibitem{PT2}
  B. Ponsot, J. Teschner,
  \textit{Clebsch-Gordan and Racah-Wigner coefficients for a continuous series of representations of $\cU_q(\mathfrak{sl}(2,\R))$},
  Comm. Math. Phys., \textbf{224} (3), (2001): 613-655.
 
\bibitem{SS1}
G. Schrader, A. Shapiro,
\textit{A cluster realization of $\cU_q(\sl_n)$ from quantum character varieties},
Invent. Math. \textbf{216}(3), (2019): 799–846.

\bibitem{SS2}
G. Schrader, A. Shapiro,
\textit{Continuous tensor categories from quantum groups I: algebraic aspects},
arXiv:1708.08107, (2017).

\bibitem{SS3}
G. Schrader, A. Shapiro,
\textit{On $b$-Whittaker functions},
arXiv:1806.00747, (2018).
\end{thebibliography}
\end{document}